\newtheorem{thm}{Theorem}[section]
\newtheorem{cor}[thm]{Corollary}
\newtheorem{lem}[thm]{Lemma}
\newtheorem*{thm1.4}{Theorem~\ref{thm1.4}}
\newtheorem*{cor1.17}{Corollary~\ref{cor1.17}}
\theoremstyle{definition}
\newtheorem{defn}[thm]{Definition}
\newtheorem*{s-t}{Standing terminologies}
\newtheorem{sn}[thm]{Standing notation}
\newtheorem*{notes}{Notes}
\newtheorem*{note}{Note}
\newcommand{\I}{\boldsymbol{I}}
\journal{arXiv:1806.09305 [math.DS]; CJM April 16, 2019}
\begin{document}

\begin{frontmatter}
\title{The McMahon pseudo-metrics of minimal semiflows with invariant measures}

\author{Xiongping Dai}
\ead{xpdai@nju.edu.cn}
\address{Department of Mathematics, Nanjing University, Nanjing 210093, People's Republic of China}


\begin{abstract}
Using McMahon pseudo-metrics, for any minimal semiflow admitting an invariant measure, we study the relationships between its equicontinuous structure relation, regionally proximal relation and Veech's relations; and characterize its weak-mixing. We show its Veech Structure Theorem if it is almost automorphic.
%
%
\end{abstract}

\begin{keyword}
Automorphy $\cdot$ equicontinuous structure $\cdot$ regional proximity $\cdot$ minimality $\cdot$ weak-mixing $\cdot$ $E$-semiflow $\cdot$ chaos

\medskip
\MSC[2010] 37A25 $\cdot$ 37B05 $\cdot$ 54H15
\end{keyword}
\end{frontmatter}

\section{Introduction}\label{sec1}
As usual, a \textit{semiflow} with phase space $X$ and with phase semigroup $T$ is understood as a pair $(T,X)$ where, unless specified otherwise, we assume that
\begin{itemize}
\item $X$ is a compact $\textrm{T}_2$-space, \textbf{not necessarily metrizable}, and $T$ a topological \textbf{semigroup} with the identity $e$; moreover, $T$ acts jointly continuously on $X$ by phase mapping $(t,x)\mapsto tx$ such that $ex=x$ and $(st)x=s(tx)$ for all $x\in X$ and $s,t\in T$.
\end{itemize}
When $T$ is a topological group here, $(T,X)$ will be called a \textit{flow}.
\begin{s-t}
Given $x\in X, U\subseteq X$, and $V\subseteq X$, for $(T,X)$, if no confusion, we write for convenience
\begin{enumerate}
\item $N_T(x,U)=\{t\,|\,tx\in U\}$ and $N_T(U,V)=\{t\,|\,U\cap t^{-1}V\not=\emptyset\}$.
\end{enumerate}
We shall say that:
\begin{enumerate}
\item[2.] $(T,X)$ is \textit{minimal} if and only if $Tx$ is dense in $X$: $\mathrm{cls}_XTx=X$, for all $x\in X$; An $x\in X$ is called a \textit{minimal point} or an \textit{a.p. point} of $(T,X)$ if $\mathrm{cls}_XTx$ is a minimal subset of $X$.
\item[3.] $(T,X)$ admits of an \textit{invariant measure} $\mu$ if $\mu$ is a Borel probability measure on $X$ such that $\mu(B)=\mu(t^{-1}B)$ for all Borel set $B\subseteq X$ and each $t\in T$. If every non-empty open subset of $X$ is of positive $\mu$-measure, then we say $\mu$ is of \textit{full support}.

\item[4.] $(T,X)$ is \textit{surjective} if for each $t$ of $T$, $x\mapsto tx$ is an onto self-map of $X$.
\item[5.] $(T,X)$ is \textit{invertible} if for each $t\in T$, $x\mapsto tx$ is a 1-1 onto self-map of $X$. In this case, by $\langle T\rangle$ we mean the smallest group of self-homeomorphisms of $X$ with $T\subseteq\langle T\rangle$.

\item[6.] $T$ is called \textit{amenable} if any semiflow on a compact $\textrm{T}_2$-space with the phase semigroup $T$ admits an invariant measure. Particularly, each abelian semigroup is amenable.
\end{enumerate}
Let $\varDelta_X=\{(x,x)\,|\,x\in X\}$ and $\mathscr{U}_X$ the compatible symmetric uniform structure of $X$. For $\varepsilon\in\mathscr{U}_X$ and $x\in X$, set
\begin{enumerate}
\item[7.] $\varepsilon[x]=\{y\in X\,|\,(x,y)\in\varepsilon\}$, which is a neighborhood of $x$ in $X$.
\end{enumerate}

Recall that a set-valued map $f\colon X\rightsquigarrow X$ is said to be \textit{continuous} at $x_0\in X$ if and only if given $\varepsilon\in\mathscr{U}_X$ there is a $\delta\in\mathscr{U}_X$ such that $x\in\delta[x_0]$ implies $f(x)\subseteq\varepsilon[f(x_0)]$ and $f(x_0)\subseteq\varepsilon[f(x)]$.
\begin{enumerate}
\item[8.] A surjective semiflow $(T,X)$ is called \textit{bi-continuous} if for each $t\in T$, $x\rightsquigarrow t^{-1}x$ is continuous at every point of $X$.
\end{enumerate}
Clearly, if $(T,X)$ is invertible, it is bi-continuous. If let $X=\{0,1\}^{\mathbb{Z}_+}$ with the usual topology and $\sigma\colon X\rightarrow X$ defined by $(x_i)_{i\in\mathbb{Z}_+}\mapsto(x_{i+1})_{i\in\mathbb{Z}_+}$, then the cascade $(\sigma,X)$ is bi-continuous. More generally, if $(T,X)$ is such that for each $t\in T$, $x\mapsto tx$ is an $N$-to-1 locally homeomorphism of $X$, then $(T,X)$ is a bi-continuous semiflow.
\end{s-t}

In this paper, we shall study the equicontinuous structure relation, weak-mixing and weak disjointness, and Veech's relations by using the McMahon pseudo-metric of minimal (bi-continuous) semiflows with invariant measures.

\subsection{Equicontinuous structure relation}\label{sec1.1}
First of all we recall a basic notion\,---\,equicontinuity, which is valid for any semiflow $(T,X)$ and for which there are some equivalent conditions in \cite{AD}.

\begin{defn}\label{def1.1}
$(T,X)$ is \textit{equicontinuous} if and only if given $\varepsilon\in\mathscr{U}_X$, there exists some $\delta\in\mathscr{U}_X$ such that $T\delta\subseteq\varepsilon$, i.e., if $(x,x^\prime)\in\delta$ then $(tx,tx^\prime)\in\varepsilon\ \forall t\in T$.
\end{defn}

As is proved in \cite{EG, V77, Aus}, for $(T,X)$ there always exists on $X$ a least closed $T$-invariant equivalence relation, denoted $S_{\textit{eq}}(X)$, such that $(T,X/S_{\textit{eq}}(X))$, defined by
\begin{gather*}
(t,S_\textit{eq}[x])\mapsto S_\textit{eq}[tx]\quad \forall (t,x)\in T\times X,
\end{gather*}
is an equicontinuous semiflow. Then:
\begin{defn}\label{def1.2}
Based on a semiflow $(T,X)$:
\begin{enumerate}
\item[1.2a] $S_{\textit{eq}}(X)$ is called the \textit{equicontinuous structure relation} of $(T,X)$. Simply write $X_{\textit{eq}}=X/S_{\textit{eq}}(X)$ if no confusion.

\item[1.2b] $(T,X_{\textit{eq}})$ is called the \textit{maximal equicontinuous factor} of $(T,X)$.

\item[1.2c] Write $\pi\colon X\rightarrow X_{\textit{eq}}$, viz $x\mapsto S_{\textit{eq}}[x]$, for the canonical projection.
\end{enumerate}
\end{defn}

In a number of situations the equicontinuous structure relation of minimal flows is known explicitly (cf.,~e.g.,~\cite{V68, Aus, AEE, Aus01, AGN}). Particularly, if $Q$ is the ``regionally proximal relation'' (cf.~Definition~\ref{def1.3} below), then the following two facts are well known:
\begin{enumerate}
\item[]\textbf{Theorem~A} (cf.~\cite[Theorem~2.6.2]{V77}).~ {\it Let $(T,X)$ be a minimal flow such that $(T,X\times X)$ has a dense set of minimal points; then $S_\textit{eq}(X)=Q(X)$.} (This is due to R.~Ellis and W.A.~Veech independently.)
\end{enumerate}

\begin{enumerate}
\item[]\textbf{Theorem~B} (cf.~McMahon~\cite{McM} and also see \cite[Theorem~9.8]{Aus}).~ {\it If $(T,X)$ is a minimal flow admitting an invariant measure, then $S_{\textit{eq}}(X)=Q(X)$.}
\end{enumerate}

Based on the recent work~\cite{DX,AD,Dai}, we will generalize the above classical Theorem~B to semiflows on compact $\textrm{T}_2$-spaces; see Theorem~\ref{thm1.4} below.

\begin{defn}\label{def1.3}
Let $(T,X)$ be a semiflow with phase semigroup $T$.
\begin{enumerate}
\item $(T,X)$ is called \textit{distal} if given $x,x^\prime\in X$ with $x\not=x^\prime$, there is an $\varepsilon\in\mathscr{U}_X$ such that $t(x,x^\prime)\not\in\varepsilon$ for all $t\in T$. Then by \cite[Theorem~1.15]{AD}, whenever $(T,X)$ is distal, then it is invertible.

\medskip
\item[] \textbf{Theorem~C} (cf.~\cite{AD}).~ {\it If $(T,X)$ is a (minimal) distal semiflow, then $(\langle T\rangle,X)$ is a (minimal) distal flow. Hence by Furstenberg's theorem~\cite{F63}, $(T,X)$ admits invariant measures.}

\medskip
\item An $x\in X$ is called a \textit{distal point} of $(T,X)$ if no point of $\textrm{cls}_XTx$ other than $x$ is proximal to $x$; i.e., if $y\in\textrm{cls}_XTx$ is such that $t_n(x,y)\to(z,z)$ for some $z\in X$ and some net $\{t_n\}$ in $T$ then $y=x$. It is easy to verify that:

\medskip
\item[]\textbf{Lemma~D}.~ {\it $(T,X)$ is distal iff every point of $X$ is distal for $(T,X)$.}

\medskip
$(T,X)$ is called \textit{point-distal} if there is a distal point whose orbit is dense in $X$.

\item We say $x\in X$ is \textit{regionally proximal to} $x^\prime\in X$ for $(T,X)$, denoted by $(x,x^\prime)\in Q(X)$ or $x^\prime\in Q[x]$, if there are nets $\{x_n\}, \{x_n^\prime\}$ in $X$ and $\{t_n\}$ in $T$ with $x_n\to x, x_n^\prime\to x^\prime$ and $\lim_n t_nx_n=\lim_nt_nx_n^\prime$.
\end{enumerate}
\end{defn}

Then by the equality:
\begin{gather*}
Q(X)={\bigcap}_{\alpha\in\mathscr{U}_X}\mathrm{cls}_{X\times X}{\bigcup}_{t\in T}t^{-1}\alpha
\end{gather*}
$Q(X)$ is a closed reflexive symmetric relation. An example due to McMahon~\cite{M76} shows it is not always an equivalence relation even in minimal flows.

If $(T,X)$ is a flow and $(x,x^\prime)\in Q(X)$, then for all $t\in T$, by $\lim_n(t_nt^{-1})(tx_n)=\lim_n(t_nt^{-1})(tx_n^\prime)$ and $t_nt^{-1}\in T$, it easily follows that $t(x,x^\prime)\in Q(X)$ and so $Q(X)$ is invariant.

In general, $Q(X)$ is neither invariant nor transitive in semiflows; yet if $(T,X)$ is minimal admitting an invariant measure, as we will see, $Q(X)$ is an invariant closed equivalence relation on $X$, and in fact

\begin{thm}\label{thm1.4}
Let $(T,X)$ be a minimal semiflow, which admits an invariant measure on $X$, then $S_{\textit{eq}}(X)=Q(X)$. Hence $(T,X_\textit{eq})$ is equicontinuous invertible.
\end{thm}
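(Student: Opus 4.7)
The plan is to establish $S_{\textit{eq}}(X)=Q(X)$ via the two inclusions; the invertibility of $(T,X_{\textit{eq}})$ is then immediate from Theorem~C, since an equicontinuous minimal semiflow is distal. The easy inclusion $Q(X)\subseteq S_{\textit{eq}}(X)$ follows because the continuous $T$-equivariant projection $\pi\colon X\to X_{\textit{eq}}$ sends $Q(X)$ into $Q(X_{\textit{eq}})$, and $Q(X_{\textit{eq}})=\varDelta_{X_{\textit{eq}}}$ for the minimal equicontinuous factor. The latter can be seen by first reducing any $Q$-defining triple $(y_n,y_n',t_n)$ to a proximal pair using the equicontinuity modulus (swap $y_n,y_n'$ with $y,y'$ along a subnet via $T\delta\subseteq\varepsilon$), and then invoking the standard fact that minimal equicontinuous semiflows are distal, so the proximal relation is trivial.

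For the harder inclusion $S_{\textit{eq}}(X)\subseteq Q(X)$, the plan is to show that $Q(X)$ is itself a closed $T$-invariant equivalence relation whose quotient is equicontinuous; minimality of $S_{\textit{eq}}$ among such relations then yields the inclusion. The device, as the paper's title advertises, is a family of McMahon pseudo-metrics attached to $\mu$. For each continuous pseudo-metric $d$ on $X$ compatible with $\mathscr{U}_X$, I will build a pseudo-metric $\rho_d$ on $X$ enjoying: (i) $T$-invariance $\rho_d(tx,ty)=\rho_d(x,y)$; (ii) upper semicontinuity with $\rho_d\le d$; and (iii) zero set $E_d\supseteq Q(X)$, with equality $\bigcap_d E_d=Q(X)$ when $d$ ranges over a separating family of such pseudo-metrics. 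Then $\bigcap_d E_d$ is a closed $T$-invariant equivalence relation, $T$ acts by isometries on the quotient under each $\rho_d$, and the quotient uniformity generated by the $\rho_d$'s is therefore equicontinuous. This forces $S_{\textit{eq}}(X)\subseteq \bigcap_d E_d=Q(X)$.

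The main technical obstacle lies in the construction of $\rho_d$ in the semigroup setting and the proof of (iii). The classical McMahon argument of Theorem~B uses invertibility of each acting element to manipulate $\int f(tx)\,d\mu$; in a semigroup we only have the weaker identity $\mu(t^{-1}B)=\mu(B)$, so $T$-invariance of $\rho_d$ must be extracted directly from this, with Fubini-type computations absorbing the possible non-injectivity or non-openness of $x\mapsto tx$. Obtaining the reverse inclusion $\bigcap_d E_d\subseteq Q(X)$ is the second delicate point: given $(x,y)$ with $\rho_d(x,y)=0$ for all admissible $d$, one must extract explicit nets $(x_n,x_n',t_n)$ witnessing regional proximity, and for this minimality of $(T,X)$, outer regularity of $\mu$, and a careful selection on the sets where the defining integrands become small should together deliver the required convergence. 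The remaining verifications (closedness, descent of the $\rho_d$'s, and compatibility of the resulting uniformity with the quotient topology on $X/Q(X)$) I expect to be routine once these core ingredients are in place.
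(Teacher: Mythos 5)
Your overall architecture is sound, and the easy inclusion $Q(X)\subseteq S_{\textit{eq}}(X)$ is handled essentially as in the paper (push $Q(X)$ forward to $Q(X_{\textit{eq}})=\varDelta$). But for the hard inclusion your proposal is a roadmap rather than a proof: the two steps you yourself flag as ``the main technical obstacle'' and ``the second delicate point''\,---\,the construction of the invariant pseudo-metrics and the extraction of regional-proximality witnesses from vanishing of all of them\,---\,are exactly where the entire content of the theorem lives, and they are left unresolved. Moreover, the way you have set the problem up makes it harder than necessary. You index your pseudo-metrics by continuous pseudo-metrics $d$ with $\rho_d\le d$ and aim to prove the \emph{global} identity $\bigcap_d E_d=Q(X)$, together with equicontinuity of the quotient by this intersection (compatibility of the generated uniformity with the quotient topology is not routine for non-metrizable $X$ and a semigroup $T$). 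No candidate construction of $\rho_d$ is offered, and it is not clear that a family with your properties (i)--(iii) exists.

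The paper's resolution is different in both respects and worth internalizing. First, the pseudo-metric is indexed not by $d$ but by a closed $T$-invariant subset $J\subseteq X\times X$: one sets $D_J(x,x')=\mu(J_x\vartriangle J_{x'})$, where $J_x$ is the section; $T$-invariance of $D_J$ follows from $\mu(J_x)=\mu(J_{x'})$ for all $x,x'$ (minimality plus outer regularity, Lemma~\ref{lem3.1}) and the mod-$0$ identities $tJ_x=J_{tx}$. Second, one never proves that $Q(X)$ itself has an equicontinuous quotient. Instead, each zero-set relation $R_J$ is a closed invariant equivalence relation containing $Q(X)$ (Lemma~\ref{lem3.5}), so $S_{\textit{eq}}\subseteq R_J$ by Corollary~\ref{cor2.9A}; and then, for a given $(x,y)\in S_{\textit{eq}}$ and a given neighborhood $V$ of $x$, one chooses $J$ \emph{adaptively}, namely $J=\overline{\bigcup_{x'\in V}T(y,x')}$. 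Then $V\subseteq J_y$ trivially, $D_J(x,y)=0$ forces $V\subseteq J_x$ up to $\mu$-null sets, and full support of $\mu$ (automatic by minimality) upgrades this to $V\subseteq J_x$, which is verbatim the statement that some $\tau\in T$ carries $y$ and some $x'\in V$ back into $V$\,---\,i.e., $(x,y)\in Q(X)$. This adaptive choice of $J$ is the missing idea in your proposal; without it, or some substitute for it, your step (iii) and the reverse inclusion $\bigcap E_d\subseteq Q(X)$ remain unestablished.
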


It should be noted that the condition ``admitting an invariant measure'' is important for our statement above. For example, let $X=[0,1]$ the unit interval with the usual topology and for every $\alpha$ with $0<\alpha<1$, define two injective mappings of $X$ into itself:
\begin{gather*}
f_\alpha\colon X\rightarrow X,\ x\mapsto\alpha x\quad {\textrm{and}}\quad
g_\alpha\colon X\rightarrow X,\  x\mapsto 1-\alpha(1-x).
\end{gather*}
Now let
$$
T=\left\{\left.f_{\alpha_1}^{\epsilon_1}g_{\gamma_1}^{\delta_1}\dotsm f_{\alpha_n}^{\epsilon_n}g_{\gamma_n}^{\delta_n}\,\right|\,0<\alpha_i<1, 0<\gamma_i<1,\ \epsilon_i=0\textrm{ or } 1,\ \delta_i=0\textrm{ or }1,\ n=1,2,\dotsc\right\}
$$
be the discrete semigroup generated by $f_\alpha,g_\alpha$, $0<\alpha<1$. It is easy to see that each $t\in T$ is injective and that $(T,X)$ is equicontinuous minimal so $S_{\textit{eq}}(X)=\varDelta_X$, but $Q(X)=X\times X$. See \cite[Example~1.2]{AD}.

Thus, as consequences of Theorem~\ref{thm1.4}, we can easily obtain the following two corollaries.

\begin{cor}\label{cor1.5}
Let $(T,X)$ be a minimal semiflow with $T$ amenable; then $S_{\textit{eq}}(X)=Q(X)$.
\end{cor}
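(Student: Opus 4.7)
The plan is to derive Corollary~\ref{cor1.5} as an immediate consequence of Theorem~\ref{thm1.4}, using only the definition of amenability given in item~6 of the Standing terminologies. The corollary weakens the hypothesis of Theorem~\ref{thm1.4} from the existence of an invariant measure on the specific minimal system $(T,X)$ to the amenability of the phase semigroup $T$, so the only task is to recover the invariant measure hypothesis from amenability.

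First I would recall that, by definition, $T$ being amenable means that \emph{every} semiflow on a compact $\mathrm{T}_2$-space with phase semigroup $T$ admits an invariant Borel probability measure. Since $(T,X)$ is by assumption a semiflow with $X$ compact $\mathrm{T}_2$ and with phase semigroup the amenable $T$, this definition applies verbatim and produces some invariant Borel probability measure $\mu$ on $X$.

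Now $(T,X)$ is a minimal semiflow admitting an invariant measure, so the hypotheses of Theorem~\ref{thm1.4} are satisfied. Invoking that theorem gives $S_{\textit{eq}}(X)=Q(X)$, which is exactly the statement of Corollary~\ref{cor1.5}.

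There is essentially no obstacle: the proof is a one-line deduction once the definition of amenability is unpacked. The only thing worth remarking on, for completeness, is the parenthetical clause in item~6 stating that abelian semigroups are amenable; this makes the corollary cover the important case of $T$ abelian without any further argument, and it may be worth mentioning this immediately after applying Theorem~\ref{thm1.4}.
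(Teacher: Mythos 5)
Your proof is correct and matches the paper's intent exactly: the paper derives Corollary~\ref{cor1.5} directly from Theorem~\ref{thm1.4} by noting that amenability of $T$ (as defined in item~6 of the standing terminologies) immediately supplies the required invariant measure on $X$. Nothing further is needed.
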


Thus if $f\colon X\rightarrow X$ is a minimal continuous transformation of $X$, then $S_{\textit{eq}}(X)=Q(X)$ associated to the natural $\mathbb{Z}_+$-action.

Next, replacing amenability of $T$ in Corollary~\ref{cor1.5} by distality of $(T,X)$, we can obtain the following result:

\begin{cor}\label{cor1.6}
Let $(T,X)$ be a minimal distal semiflow; then $S_{\textit{eq}}(X)=Q(X)$.
\end{cor}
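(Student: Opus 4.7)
My plan is to derive Corollary~\ref{cor1.6} directly from Theorem~\ref{thm1.4} by using distality only to manufacture an invariant measure. Since Theorem~\ref{thm1.4} already gives $S_{\textit{eq}}(X)=Q(X)$ as soon as $(T,X)$ possesses an invariant Borel probability measure, the whole corollary reduces to verifying that existence statement under the distality hypothesis.

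For that step I would invoke Theorem~C in the excerpt, which bundles precisely what we need for a minimal distal semiflow $(T,X)$: first, the enveloping group $\langle T\rangle$ acts as a minimal distal flow on $X$; second, Furstenberg's theorem on distal flows then supplies an invariant Borel probability measure $\mu$ on $X$, and this $\mu$ is automatically $T$-invariant since $T\subseteq\langle T\rangle$. Feeding the triple $(T,X,\mu)$ into Theorem~\ref{thm1.4} yields both $S_{\textit{eq}}(X)=Q(X)$ and the bonus conclusion that $(T,X_{\textit{eq}})$ is equicontinuous and invertible.

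So the argument is essentially a two-step chain of citations, completely parallel to the proof of Corollary~\ref{cor1.5} (where the invariant measure is provided by amenability rather than by distality). The only thing worth watching out for is a definitional pitfall: $S_{\textit{eq}}(X)$ and $Q(X)$ in Definitions~\ref{def1.2} and~\ref{def1.3} are defined with respect to the original semigroup $T$, not the enveloping group $\langle T\rangle$. However, Theorem~\ref{thm1.4} is itself stated for semiflows, so one applies it directly to $(T,X)$ with the $T$-invariant measure obtained above, and no substitution of $T$ by $\langle T\rangle$ is required in the conclusion. Given Theorem~\ref{thm1.4} and Theorem~C, there is no remaining obstacle to address.
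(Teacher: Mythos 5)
Your proof is correct and follows exactly the paper's own route: invoke Theorem~C (via Furstenberg's theorem for the distal flow $(\langle T\rangle,X)$) to obtain a $T$-invariant measure, then apply Theorem~\ref{thm1.4}. The remark that the conclusion is stated for the original semigroup $T$ and needs no substitution by $\langle T\rangle$ is a reasonable clarification but introduces nothing beyond what the paper does.
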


\begin{proof}
Since $(T,X)$ is distal, each $t\in T$ is a self-homeomorphism of $X$. Then by Theorem~C, $(T,X)$ admits an invariant measure. Then Corollary~\ref{cor1.6} follows from Theorem~\ref{thm1.4}.
\end{proof}

$\S\S\ref{sec2}$ and \ref{sec3} of this paper will be devoted to proving Theorem~\ref{thm1.4}. Notice that although we will see that $(T,X_\textit{eq})$ is invertible in the situation of Theorem~\ref{thm1.4}, yet each $t\in T$ itself need not be invertible with respect to $(T,X)$. This will cause the difficulty.

\subsection{Weak-mixing semiflows}\label{sec1.2}
Let $(T,X)$ be a semiflow. Let $X^n=X\times\dotsm\times X$ ($n$-times), for any integer $n\ge2$. Based on $(T,X)$, $(T,X^n)$ is also a semiflow, which is defined by $t\colon (x_i)_{i=1}^n\mapsto(tx_i)_{i=1}^n$. Given any cardinality $\mathfrak{c}\ge1$, we can similarly define $(T,X^\mathfrak{c})$.
By $\mathfrak{U}(X)$ we denote the collection of non-empty open subsets of $X$.
Then we introduce some basic standard notions.
\begin{enumerate}
\item[1)] $(T,X)$ is called \textit{topologically transitive} (T.T.) iff every $T$-invariant set with non-empty interior is dense in $X$ iff $N_T(U,V)\not=\emptyset$ for $U,V\in\mathfrak{U}(X)$.

\item[2)] $(T,X)$ is called \textit{point-transitive} (P.T.) if there is some point $x\in X$ with $Tx$ dense in $X$. In this case, write $x\in \textrm{Tran}\,(T,X)$.

\item[3)] $(T,X)$ is called \textit{weak-mixing} if $(T,X\times X)$ is a T.T. semiflow.

\item[4)] $(T,X)$ is called \textit{discretely thickly T.T.} if given $U,V\in\mathfrak{U}(X)$, $N_T(U,V)$ is \textit{discretely thick} (i.e. for any finite subset $F$ of $T$ there is $t\in T$ such that $Ft\subseteq N_T(U,V)$).

\item[5)] $A\subseteq T$ is called an \textit{IP-set} of $T$ if there exists a sequence $\{t_n\}$ in $T$ such that $t_{n_1}t_{n_2}\dotsc t_{n_k}\in A$ for all $1\le n_1<n_2<\dotsm<n_k<\infty$ and $k\ge1$. $(T,X)$ is said to be \textit{IP-T.T.} if $N_T(U,V)$ is an IP-set of $T$ for $U,V\in\mathfrak{U}(X)$.

\item[6)] Let $n\ge2$ be an integer. We say $(T,X)$ satisfies the \textit{$n$-order Bronstein condition} if $X^n$ contains a dense set of a.p. points. We say $(T,X)$ and $(T,Y)$ satisfies the \textit{Bronstein condition} if the a.p. points are dense in $X\times Y$.
\end{enumerate}

In \cite[p.~26]{F67}, T.T. is called ``ergodic''. It should be noted that in our situation, T.T. $\not\Rightarrow$ P.T., and weak-mixing $\not\Rightarrow$ P.T. as well.

In $\S\ref{sec4}$, we will characterize ``weak-mixing'' of minimal semiflow with invariant measures by using the McMahon pseudo-metric; see Theorem~\ref{thm4.12}.

It is a well-known important fact that (cf.~\cite[Proposition~II.3]{F67} for cascades, \cite[Theorem~1.11]{G76} for $T$ abelian groups, and \cite[Lemma~3.2]{DT} for abelian semigroups):
\begin{quote}
\textbf{Theorem~E}.~ \textit{If $(T,X)$ is weak-mixing with $T$ abelian, then $(T,X^n)$ is T.T. for all integer $n\ge2$}.
\end{quote}

In fact, this very important theorem can be extended to amenable semigroups as follows, which is new even for flows.

\begin{thm}\label{thm1.7}
Let $(T,X)$ be a minimal semiflow, which admits of an invariant measure $\mu$ $($e.g. $T$ is an amenable semigroup$)$. Then $(T,X)$ is weak-mixing iff $(T,X\times Y)$ is T.T. for all $(T,Y)$ which is T.T. and admits of a full ergodic center iff $Q(X)=X\times X$.
\end{thm}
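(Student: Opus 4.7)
Label the three conditions (a) $(T,X)$ is weak-mixing, (b) $(T,X\times Y)$ is T.T.\ for every T.T.\ $(T,Y)$ admitting a full ergodic center, and (c) $Q(X)=X\times X$. I will prove (b)$\Rightarrow$(a)$\Rightarrow$(c)$\Rightarrow$(b).

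The implication (b)$\Rightarrow$(a) is immediate by taking $Y=X$: since $(T,X)$ is minimal with invariant measure $\mu$, it is T.T., and minimality ensures that it has a full ergodic center; so (b) yields T.T.\ of $(T,X\times X)$, which is (a).

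For (a)$\Rightarrow$(c), fix $(x,x')\in X\times X$ and $\alpha\in\mathscr{U}_X$. Pick a symmetric $\beta\in\mathscr{U}_X$ with $\beta\circ\beta\subseteq\alpha$ and any $z\in X$; applying T.T.\ of $(T,X\times X)$ to $\beta[x]\times\beta[x']$ and $\beta[z]\times\beta[z]$ gives $t\in T$ together with $u\in\beta[x]$, $u'\in\beta[x']$ such that $tu,tu'\in\beta[z]$, whence $(tu,tu')\in\alpha$. Indexing by $\alpha\in\mathscr{U}_X$ yields nets $u_n\to x$, $u_n'\to x'$ and $t_n\in T$ with $(t_nu_n,t_nu_n')$ clustering on $\varDelta_X$; compactness of $X$ then delivers a subnet along which $\lim_n t_nu_n=\lim_n t_nu_n'$, so $(x,x')\in Q(X)$.

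The substance of the theorem is (c)$\Rightarrow$(b). From $Q(X)=X\times X$ and Theorem \ref{thm1.4} I obtain $S_{\textit{eq}}(X)=X\times X$, so the maximal equicontinuous factor $(T,X_{\textit{eq}})$ collapses to a single point. Via the McMahon pseudo-metric $\rho=\rho_\mu$ developed in \S\ref{sec2}--\S\ref{sec3} this is equivalent to $\rho\equiv 0$ on $X$, which is precisely the statement that the measure-preserving system $(T,X,\mu)$ has no non-constant $L^2(\mu)$-eigenfunction, i.e., is measure-theoretically weakly mixing. Given any T.T.\ $(T,Y)$ with a full ergodic center and non-empty open $U_1,U_2\subseteq X$, $V_1,V_2\subseteq Y$, minimality of $(T,X)$ forces $\mu(U_i)>0$, while the full-ergodic-center hypothesis combined with T.T.\ of $(T,Y)$ supplies an ergodic invariant measure $\nu$ on $Y$ with $\nu(V_1),\nu(V_2)>0$ (possibly after replacing $V_2$ by a suitable $T$-translate). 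Measure-theoretic weak mixing of $\mu$ combined with ergodicity of $\nu$ gives ergodicity of $(T,X\times Y,\mu\times\nu)$, whence the set $\{t\in T:t(U_1\times V_1)\cap(U_2\times V_2)\neq\emptyset\}$ is non-empty, proving T.T.\ of $(T,X\times Y)$.

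\textbf{Main obstacle.} The decisive step is translating the purely topological identity $Q(X)=X\times X$ into measure-theoretic weak mixing of $(T,X,\mu)$; this is exactly what the McMahon pseudo-metric is engineered to accomplish, and rigorously carrying it out is the principal task of \S\ref{sec2}--\S\ref{sec3}. A secondary subtlety is that $T$ is only a semigroup and individual $t\in T$ need not be invertible on $X$, so the return-time arguments and the measure-to-topology bridge must be run without recourse to inverses — precisely the situation that Theorem \ref{thm1.4} and the paper's bi-continuity framework are designed to accommodate.
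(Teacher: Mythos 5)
Your implications (b)$\Rightarrow$(a) and (a)$\Rightarrow$(c) are correct and correspond to what the paper treats as routine. The genuine gap is in (c)$\Rightarrow$(b), at the step where you assert that triviality of the maximal equicontinuous factor, i.e.\ $S_{\textit{eq}}(X)=X\times X$, ``is precisely the statement that $(T,X,\mu)$ has no non-constant $L^2(\mu)$-eigenfunction, i.e., is measure-theoretically weakly mixing.'' That equivalence is false: there exist minimal, uniquely ergodic, topologically weakly mixing systems whose unique invariant measure has purely discrete spectrum (hence is not measure-theoretically weakly mixing), and nothing in \S\ref{sec2}--\S\ref{sec3} establishes such a bridge. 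The McMahon pseudo-metric of the paper is not a single pseudo-metric $\rho_\mu$ attached to $(T,X,\mu)$; it is a family $D_J(x,x')=\mu(J_x\vartriangle J_{x'})$ indexed by closed invariant sets $J\subseteq X\times Y$, with the measure living on the \emph{second} factor $Y$. Consequently the remainder of your argument (ergodicity of $\mu\times\nu$ deduced from ``weak mixing of $\mu$'') has no foundation; it would in any case also need $\nu$ to charge the specific open sets $V_1,V_2$, which the full-ergodic-center hypothesis only supplies after the translation you wave at.

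The paper's route for (c)$\Rightarrow$(b) is purely topological-dynamical and never invokes spectral theory (Corollary~\ref{cor2.9A} plus Theorem~\ref{thm4.6}): suppose $(T,X\times Y)$ is not T.T.; then there is a closed invariant $J\subsetneq X\times Y$ with non-empty interior, so some section $J_x$ contains a non-empty open $U$ while $V=Y\setminus J_x\neq\emptyset$; T.T.\ of $(T,Y)$ gives $\tau$ with $U\cap\tau^{-1}V\neq\emptyset$, and the full ergodic center supplies an invariant measure $\nu$ on $Y$ with $\nu(U\cap\tau^{-1}V)>0$, whence $D_J(x,\tau x)>0$ and $R_J\neq X\times X$. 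But Lemma~\ref{lem3.5} shows $Q(X)\subseteq R_J$ for \emph{every} such $J$ and $\nu$, so $Q(X)=X\times X$ forces $D_J\equiv 0$ --- a contradiction. To repair your write-up, replace the measure-theoretic weak-mixing step by this joining argument; your two correct implications can stand as written.
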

\begin{note}
This theorem is comparable with the following theorem in which our ergodic condition ``$\mu$'' and ``full ergodic center'' is replaced by the topological condition---``Bronstein condition'':
\begin{quote}
\textbf{Theorem~F} (cf.~Veech~\cite[Theorem~2.6.3 for the relativized version]{V77}).~ \textit{Let $T$ be a group. Let $(T,X)$ be minimal which satisfies the 2-order Bronstein condition, and
suppose $S_\textit{eq}(X)=X\times X$. If $(T,Z)$ is T.T. such that $(T,X)$ and $(T,Z)$ satisfies the Bronstein condition, then $(T,X\times Y)$ is a T.T. flow}.
\end{quote}
Theorem~F implies that
\begin{itemize}
\item {\it Let $(T,X)$ be a minimal flow satisfying the 2-order Bronstein condition. Then $(T,X)$ is weak-mixing iff $Q(X)=X\times X$.}
\end{itemize}
\end{note}
The ``ergodic center'' will be defined in Definition~\ref{def4.3}. This theorem will be proved in $\S$\ref{sec4.1} under the guises of Corollary~\ref{cor4.8} for the first ``if and only if'' and Theorem~\ref{thm4.12} for the second ``if and only if''. By induction based on Theorem~\ref{thm1.7} we can obtain the following

\begin{thm}\label{thm1.8}
Let $(T,X)$ be a minimal semiflow, which admits an invariant measure $($e.g. $T$ an amenable semigroup$)$. Then $(T,X)$ is weak-mixing iff $(T,X^\mathfrak{c})$ is T.T. for all cardinality $\mathfrak{c}\ge2$ iff $(T,X)$ is a discretely thickly T.T. semiflow.
\end{thm}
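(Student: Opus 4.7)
The plan is to prove $\mathrm{(a)}\Rightarrow\mathrm{(b)}\Rightarrow\mathrm{(c)}\Rightarrow\mathrm{(a)}$, where (a), (b), (c) denote weak-mixing, T.T.\ of $(T,X^\mathfrak{c})$ for all $\mathfrak{c}\ge 2$, and discretely thickly T.T.\ respectively. Theorem~\ref{thm1.7} is the engine. The direction $\mathrm{(b)}\Rightarrow\mathrm{(a)}$ is just the case $\mathfrak{c}=2$. Because a basic open subset of the Tychonoff product $X^\mathfrak{c}$ involves only finitely many coordinates, T.T.\ of $(T,X^\mathfrak{c})$ is equivalent to T.T.\ of $(T,X^n)$ for every integer $n\ge 2$, so $\mathrm{(a)}\Rightarrow\mathrm{(b)}$ amounts to showing $(T,X^n)$ T.T.\ for every such $n$, which I would establish by induction; the base case $n=2$ is the definition of weak-mixing.

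For the inductive step I would write $X^{n+1}=X\times X^n$ and apply Theorem~\ref{thm1.7} with $Y=X^n$ in the form ``weak-mixing of $(T,X)$ implies $(T,X\times Y)$ T.T.\ whenever $(T,Y)$ is T.T.\ and carries a full ergodic center.'' T.T.\ of $(T,X^n)$ is the inductive hypothesis; the required full ergodic center of $(T,X^n)$ should come from the product measure $\mu^{\otimes n}$, where $\mu$ is the invariant probability on $X$ furnished by hypothesis. Minimality of $(T,X)$ forces $\mathrm{supp}(\mu)=X$, since the support of an invariant Borel probability measure is a closed, $T$-forward-invariant subset of $X$; hence $\mu^{\otimes n}$ is full-support, and the weak-mixing hypothesis is exactly what should allow $\mu^{\otimes n}$ to deliver the desired ergodic center on $X^n$. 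Carrying out this ergodic-theoretic translation of topological weak-mixing in the semigroup, non-metrizable setting of \S\ref{sec4.1} is the step I expect to be the main technical hurdle; everything else is a combinatorial bootstrap.

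For $\mathrm{(b)}\Rightarrow\mathrm{(c)}$, fix $U,V\in\mathfrak{U}(X)$ and a finite $F=\{s_1,\dots,s_n\}\subseteq T$; $T$-invariance and full support of $\mu$ give $\mu(s_i^{-1}V)=\mu(V)>0$, so $W:=s_1^{-1}V\times\dots\times s_n^{-1}V$ is non-empty open in $X^n$, and T.T.\ of $(T,X^n)$ applied to $(U^n,W)$ produces $t\in T$ and $x_i\in U$ with $tx_i\in s_i^{-1}V$, i.e., $s_it\in N_T(U,V)$ for every $i$, so $Ft\subseteq N_T(U,V)$. For $\mathrm{(c)}\Rightarrow\mathrm{(a)}$, given $U_1,U_2,V_1,V_2\in\mathfrak{U}(X)$ I reduce to locating some $t\in N_T(U_1,U_2)\cap N_T(V_1,V_2)$, since any such $t$ witnesses T.T.\ of $(T,X\times X)$ on the box $(U_1\times V_1,U_2\times V_2)$. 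Minimality plus compactness yield a finite $F\subseteq T$ with $X=\bigcup_{f\in F}f^{-1}V_2$; then for any $s\in T$ and any fixed $v_1\in V_1$ some $f\in F$ satisfies $fsv_1\in V_2$, so $fs\in N_T(V_1,V_2)$. Applying discrete thickness of $N_T(U_1,U_2)$ to this same $F$ furnishes $t$ with $Ft\subseteq N_T(U_1,U_2)$; choosing $f\in F$ with $ft\in N_T(V_1,V_2)$ and noting $ft\in Ft\subseteq N_T(U_1,U_2)$ completes the proof.
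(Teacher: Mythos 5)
Your proposal is correct and follows essentially the same route as the paper's proof of Theorem~\ref{thm4.11}: induction on finite powers via Theorem~\ref{thm1.7} (equivalently Theorem~\ref{thm4.6}) with the full-support product measure supplying the full ergodic center of $X^n$, the finite-coordinate reduction for arbitrary $\mathfrak{c}$, and the thick/syndetic intersection argument for the last equivalence. The only cosmetic difference is that you prove the discrete syndeticity of $N_T(V_1,V_2)$ directly from minimality and compactness where the paper cites \cite[Lemma~2.3]{DT}.
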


\begin{note}
Similar to Theorem~\ref{thm1.7}, using the Bronstein condition instead of our ergodic condition we can obtain the following
\begin{quote}
\textbf{Theorem~G} (cf.~Veech~\cite[Theorem~2.6.4 for the relativized version]{V77}).~ \textit{Let $(T,X)$ be a minimal flow which satisfies the $n$-order Bronstein condition for each $n\ge2$. Then $(T,X^n)$ is T.T. for all $n\ge2$}.
\end{quote}
\end{note}

Theorem~\ref{thm1.8} will be proved in $\S\ref{sec4.1}$ under the guise of Theorem~\ref{thm4.11}. It is not known whether these assertions still hold for a non-minimal semiflow with amenable phase semigroup.

\begin{defn}[\cite{Dai}]
An $x\in X$ is called \textit{locally almost periodic} (l.a.p.) for $(T,X)$ if for each neighborhood $U$ of $x$ there are a neighborhood $V$ of $x$ and a discretely syndetic subset $A$ of $T$ such that $AV\subseteq U$.

Here by a ``discretely syndetic'' set $A$ we mean that there is a finite subset $K$ of $T$ such that $Kt\cap A\not=\emptyset\ \forall t\in T$. A subset of $T$ is discretely syndetic if and only if it intersects non-voidly each discretely thick subset of $T$ (cf., e.g.,~\cite{AD}).
\end{defn}

\begin{cor}\label{cor1.10}
Let $(T,X)$ be a minimal semiflow with $T$ amenable. If $(T,X)$ is weak-mixing, then there is no l.a.p. point of $(T,X)$.
\end{cor}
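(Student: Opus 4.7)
The plan is to derive a contradiction from the assumption that some $x_0\in X$ is locally almost periodic, by pitting the ``discretely syndetic return'' given by l.a.p.\ against the ``discretely thick'' transitivity that weak-mixing forces via Theorem~\ref{thm1.8}. Since $T$ is amenable, $(T,X)$ admits an invariant measure, so Theorem~\ref{thm1.8} applies and tells us that $(T,X)$ is discretely thickly T.T. One can dispense with the trivial case $|X|=1$ (in which nothing is to prove since weak-mixing would reduce to the one-point semiflow and one would not speak of l.a.p.\ points in any interesting sense, or one simply takes it as excluded as is standard).

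Assuming $|X|\ge 2$, I would first pick any point $y\in X$ with $y\ne x_0$ and, using the Hausdorff property of $X$, choose open neighborhoods $U\ni x_0$ and $W\ni y$ with $U\cap W=\emptyset$. Now invoke the hypothesis that $x_0$ is l.a.p.: there exist a neighborhood $V$ of $x_0$ and a discretely syndetic set $A\subseteq T$ such that $AV\subseteq U$. Both $V$ and $W$ lie in $\mathfrak{U}(X)$, so by the discretely thickly T.T.\ property furnished by Theorem~\ref{thm1.8}, the hitting set
\begin{gather*}
N_T(V,W)=\{t\in T\,|\,tV\cap W\ne\emptyset\}
\end{gather*}
is a discretely thick subset of $T$.

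The final step is the clash: for every $a\in A$, the inclusion $aV\subseteq U$ combined with $U\cap W=\emptyset$ yields $aV\cap W=\emptyset$, i.e.\ $a\notin N_T(V,W)$. Hence $A\cap N_T(V,W)=\emptyset$. But by the defining property of ``discretely syndetic'' recalled in the definition just above the corollary, a discretely syndetic set meets every discretely thick set non-voidly, so $A\cap N_T(V,W)\ne\emptyset$, a contradiction. This rules out the existence of any l.a.p.\ point.

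No real obstacle is anticipated; the entire argument is a two-line combinatorial clash between syndetic and thick subsets of $T$, and the substantive content has been outsourced to Theorem~\ref{thm1.8}. The only care needed is to make sure Theorem~\ref{thm1.8} has been proved without any circular reliance on this corollary (it has not, since it will be established in~$\S\ref{sec4.1}$ directly from the McMahon pseudo-metric machinery), and to remember that ``discretely syndetic $\Leftrightarrow$ meets every discretely thick set'' is exactly the auxiliary characterization built into the definition.
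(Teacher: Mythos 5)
Your argument is correct and is essentially the paper's own proof: both reduce to the clash between the discretely syndetic set $A$ with $AV\subseteq U$ coming from the l.a.p.\ hypothesis and the discrete thickness of $N_T(V,W)$ supplied by Theorem~\ref{thm1.8}, using that a discretely syndetic set must meet every discretely thick set. Your write-up merely makes explicit the final step that the paper leaves as ``this contradicts\dots''.
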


\begin{proof}
Suppose the contrary that there is an l.a.p. point $x_0$. Since $X$ is a non-singleton $\textrm{T}_2$-space, we can choose disjoint $U,U^\prime\in\mathfrak{U}(X)$ with $x_0\in U$. Then there are a discretely syndetic set $A\subseteq T$ and an open neighborhood $V$ of $x$ such that $AV\subseteq U$. But this contradicts that $N_T(V,U^\prime)$ is discretely thick in $T$ by Theorem~\ref{thm1.8}. This proves Corollary\ref{cor1.10}.
\end{proof}

Recall that an $x\in X$ is referred to as an \textit{IP$^*$-recurrent point} of $(T,X)$ if and only if given a neighborhood $U$ of $x$, $N_T(x,U)$ is an \textit{IP$^*$-set} in the sense that it intersects non-voidly every IP-set of $T$ (cf.~\cite{Fur, DL}).

\begin{quote}
\textbf{Theorem~H} (cf.~\cite[Theorem~9.11]{Fur} and \cite[Theorem~4]{DL}).~ {\it Let $(T,X)$ be any semiflow and $x\in X$; then $x$ is a distal point of $(T,X)$ if and only if $x$ is an IP$^*$-recurrent point of $(T,X)$.}
\end{quote}

\begin{cor}\label{cor1.11}
If $(T,X)$ is a minimal weak-mixing semiflow with $T$ amenable, then there exists no distal point of $(T,X)$.
\end{cor}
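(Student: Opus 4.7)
I would argue by contradiction, supposing $x_0\in X$ is a distal point. By Theorem~H, $N_T(x_0,U)$ is an IP$^*$-set of $T$ for every open neighborhood $U$ of $x_0$. The conclusion is vacuous for singleton $X$, so assume $|X|\ge 2$ and pick disjoint $U,U'\in\mathfrak{U}(X)$ with $x_0\in U$.

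By Theorem~\ref{thm1.8}, $(T,X)$ is discretely thickly T.T., so $N_T(V,U')$ is discretely thick in $T$ for every non-empty open $V$. A standard induction---pick $t_1$ in the discretely thick set and, given $t_1,\dots,t_n$, apply the thick property to the finite collection $\{t_{i_1}\cdots t_{i_k}:1\le i_1<\cdots<i_k\le n\}\cup\{e\}$ to obtain a $t_{n+1}$ for which all new partial products remain in the set---yields an IP-sequence inside $N_T(V,U')$. The IP$^*$-property of $N_T(x_0,U)$ then forces $N_T(x_0,U)\cap N_T(V,U')\ne\emptyset$, producing $t\in T$ with $tx_0\in U$ and some $y\in V$ with $ty\in U'$.

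The principal obstacle---and the essential difference from the l.a.p.\ case of Corollary~\ref{cor1.10}---is that IP$^*$-recurrence controls only the orbit of $x_0$ itself and not a whole neighborhood, so one does not get the immediate collision ``$tV\subseteq U$ versus $tV\cap U'\ne\emptyset$'' used there: $tx_0\in U$ together with $ty\in U'$ for some $y\ne x_0$ in $V$ is consistent. My plan to finish is to let $V$ shrink along a neighborhood base at $x_0$ and pass to subnets; by compactness of $X$ this gives nets $y_V\to x_0$, $t_Vx_0\to x_0$ and $t_Vy_V\to z\in\overline{U'}$ with $z\ne x_0$. Converting this ``regional-proximal'' drift into an honest proximal pair $(x_0,z)\in P(X)$, contradicting distality of $x_0$ whose proximal cell equals $\{x_0\}$, is where the semigroup (as opposed to group) setting really bites, since we lack inverses. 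I would address it by an enveloping-semigroup argument in $E(X,T)$: extract a minimal idempotent $u$ as a cluster point of $(t_V)$ in a minimal left ideal; distality of $x_0$ forces $ux_0=x_0$, while the off-diagonal drift forces $u$ to carry some point arbitrarily close to $x_0$ onto $z\ne x_0$, and the minimality of the ideal transports this to produce a genuine point proximal to $x_0$ different from $x_0$. Theorem~\ref{thm1.7} (equivalently Theorem~\ref{thm4.12}, proved later) stating $Q(X)=X\times X$ could be used to streamline this final transport step.
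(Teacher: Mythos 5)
Your first half is sound and matches the paper: Theorem~H makes $N_T(x_0,U)$ an IP$^*$-set, Theorem~\ref{thm1.8} makes the relevant hitting set discretely thick, and your inductive construction of an IP-sequence inside a discretely thick set is exactly the standard argument the paper cites (\cite[Lemma~9.1]{Fur}). The genuine gap is in the second half, and you have correctly located it yourself: aiming the transitivity at a set $U'$ \emph{away} from $x_0$ produces only a separation statement (nets $y_V\to x_0$, $t_Vx_0\to x_0$, $t_Vy_V\to z\neq x_0$), i.e.\ sensitivity at $x_0$, which is not in conflict with $x_0$ being a distal point; the whole content of the corollary is hidden in the ``transport'' step you sketch. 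That step as described does not work: a cluster point $p$ of $(t_V)$ in the enveloping semigroup acts only with separate continuity, so from $t_Vx_0\to x_0$ you get $px_0=x_0$, but from $t_Vy_V\to z$ with $y_V\to x_0$ you get no control whatever on $py$ for any fixed $y$; there is no reason the cluster point is an idempotent or lies in a minimal left ideal; and invoking $Q(X)=X\times X$ does not help, since the gap between regional proximality and proximality is precisely what a distal point measures.

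The paper's proof avoids this by aiming the transitivity the other way: fix a closed set $W$ with nonempty interior and $x_0\notin W$, and for each $\varepsilon\in\mathscr{U}_X$ intersect the IP$^*$-set $N_T(x_0,\varepsilon[x_0])$ with the IP-set $N_T(W,\varepsilon[x_0])$ --- the target is the \emph{shrinking} neighborhood $\varepsilon[x_0]$, so one obtains $t$ with $tx_0$ and $tx$ (for some $x\in W$) \emph{both} in $\mathrm{cls}_X\varepsilon[x_0]$, a collision rather than a separation. Setting $W_\varepsilon=\{x\in W\mid \exists t\in T\text{ s.t. }tx_0,tx\in\mathrm{cls}_X\varepsilon[x_0]\}$, these sets are nonempty and have the finite intersection property (since $W_{\varepsilon_1}\cap\dotsm\cap W_{\varepsilon_n}\supseteq W_\varepsilon$ for $\varepsilon\le\varepsilon_1\cap\dotsm\cap\varepsilon_n$), so compactness yields a single $y\in\bigcap_{\varepsilon}W_\varepsilon$; this one point $y\in W$ is then genuinely proximal to $x_0$ and distinct from it, contradicting distality directly with no enveloping-semigroup machinery. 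To repair your argument, replace your fixed far-away target $U'$ by $\varepsilon[x_0]$ and run this compactness argument; as written, your proof is incomplete.
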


\begin{notes}
$\,$
\begin{enumerate}
\item See Veech~\cite{V70} and Furstenberg~\cite[Theorem~9.12]{Fur} for cascade $(f,X)$ with $X$ a compact metric space and Dai-Tang~\cite[Proposition~3.5]{DT} for IP-T.T. semiflow $(T,X)$ with first countable phase space $X$. Here without any countability our proof is completely new.

\item If $T$ is a group in place of ``$T$ ameanble'', the consequence of Corollary~\ref{cor1.11} still holds using Veech's Structure Theorem of point-distal flows; see Theorem~\ref{thm2.10}.

\item By Corollary~\ref{cor1.11}, a point-distal semiflow is T.T. but it is never an IP-T.T. semiflow.
\end{enumerate}
\end{notes}

\begin{proof}
Assume for a contradiction that $(T,X)$ has a distal point, say $x_0\in X$. Let $W$ be a closed subset of $X$ with $x_0\not\in W$ and $\textrm{Int}_XW\not=\emptyset$. Let $\beta\in\mathscr{U}_X$ be such that $\beta[x_0]\cap W=\emptyset$. For all $\varepsilon\in\mathscr{U}_X$ with $\varepsilon\le\beta$, define a closed subset of $X$ as follows:
$W_\varepsilon=\left\{x\in W\,|\,\exists\,t\in T\textit{ s.t. }tx_0,tx\in\textrm{cls}_X\varepsilon[x_0]\right\}$.
Since $x_0$ is IP$^*$-recurrent by Theorem~H, $N_T(x_0,\varepsilon[x_0])$ is an IP$^*$-set. On the other hand, by Theorem~\ref{thm1.8}, $N_T(W,\varepsilon[x_0])$ is discretely thick and so $N_T(W,\varepsilon[x_0])$ is an IP-set (cf., e.g., \cite[Lemma~9.1]{Fur} and \cite[(1.2a)]{Dai}). Therefore, $W_\varepsilon\not=\emptyset$ for all $\varepsilon\in\mathscr{U}_X$. Given any $\varepsilon_1,\dotsc,\varepsilon_n\le\beta$, let $\varepsilon\le\varepsilon_1\cap\dotsm\cap\varepsilon_n$; then $W_{\varepsilon_1}\cap\dotsm\cap W_{\varepsilon_n}\supseteq W_\varepsilon\not=\emptyset$. Thus $\{W_\varepsilon\,|\,\varepsilon\le\beta\}$ has the finite intersection property. This shows that $\bigcap_{\varepsilon\le\beta}W_\varepsilon\not=\emptyset$. Now let $y\in\bigcap_{\varepsilon\le\beta}W_\varepsilon$ be any given. Clearly, $x_0\not=y$. Since for each $\varepsilon\le\beta$ there is some $t=t_\varepsilon\in T$ such that $tx_0, ty\in\textrm{cls}_X\varepsilon[x_0]$, hence $y$ is proximal to $x_0$. This is a contradiction to the assumption that $x_0$ is a distal point of $(T,X)$. This proves Corollary~\ref{cor1.11}.
\end{proof}

\begin{thm}[{Vecch 1977~\cite[Question, p.~802]{V77} for any point-distal flows}]\label{thm1.12}
Let $(T,X)$ be a point-distal non-trivial semiflow with $T$ amenable. Then $(T,X)$ has a non-trivial equicontinuous factor.
\end{thm}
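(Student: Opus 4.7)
The strategy is to combine the equality $S_\textit{eq}(X)=Q(X)$ from Corollary~\ref{cor1.5} with the non-existence of distal points in minimal weak-mixing semiflows from Corollary~\ref{cor1.11}, and to argue by contradiction that $X_\textit{eq}$ must be non-trivial.

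First I would verify that $(T,X)$ is itself minimal, so that these two corollaries apply. Fix a distal point $x_0\in X$ with $\mathrm{cls}_XTx_0=X$. By Theorem~H, $x_0$ is IP$^*$-recurrent, i.e.\ $N_T(x_0,U)$ meets every IP-set of $T$ for every neighbourhood $U$ of $x_0$. The Hindman-type fact already invoked in the proof of Corollary~\ref{cor1.11}---that every discretely thick subset of $T$ contains an IP-set (cf.~\cite[Lemma~9.1]{Fur} and \cite[(1.2a)]{Dai})---implies dually that every IP$^*$-set is discretely syndetic, so $N_T(x_0,U)$ is discretely syndetic for every neighbourhood $U$ of $x_0$. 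Thus $x_0$ is an a.p. point, $\mathrm{cls}_XTx_0$ is a minimal set, and density of the orbit forces $(T,X)$ itself to be minimal.

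With minimality at hand, amenability of $T$ together with Corollary~\ref{cor1.5} yields $S_\textit{eq}(X)=Q(X)$. Now suppose for contradiction that $X_\textit{eq}$ reduces to a single point; then $S_\textit{eq}(X)=X\times X$, and hence $Q(X)=X\times X$. The second equivalence in Theorem~\ref{thm1.7} says precisely that $(T,X)$ is weak-mixing. But Corollary~\ref{cor1.11} applied to this minimal weak-mixing semiflow with amenable phase semigroup rules out the existence of any distal point, contradicting the choice of $x_0$. Therefore $X_\textit{eq}$ has more than one point, so $(T,X_\textit{eq})$ is a non-trivial equicontinuous factor of $(T,X)$, as claimed.

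The delicate step, I expect, is the preliminary verification that $(T,X)$ is minimal: for invertible group actions the passage ``distal point $\Rightarrow$ a.p. point'' is classical, but in the non-invertible semiflow setting one must detour through Theorem~H and the semigroup combinatorics of IP-sets. Once minimality is secured, the rest of the argument is just a recombination of Corollaries~\ref{cor1.5} and \ref{cor1.11} with Theorem~\ref{thm1.7}.
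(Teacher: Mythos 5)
Your proposal is correct and takes essentially the same route as the paper's own proof: Corollary~\ref{cor1.5} (equivalently Theorem~\ref{thm1.4}) gives $S_\textit{eq}(X)=Q(X)$, triviality of $X_\textit{eq}$ would force $Q(X)=X\times X$ and hence weak-mixing by Theorem~\ref{thm1.7}, which contradicts the non-existence of distal points from Corollary~\ref{cor1.11}. The only difference is that you explicitly verify minimality of $(T,X)$ via Theorem~H and the IP-set combinatorics; the paper leaves this implicit, having noted elsewhere that a distal point is an a.p.\ point, so a point-distal semiflow is minimal.
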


\begin{proof}
By Theorem~\ref{thm1.4}, $S_\textit{eq}(X)=Q(X)$. Suppose the contrary that $(T,X)$ has no non-trivial equicontinuous factor, then $Q(X)=X\times X$ and so $(T,X)$ is weakly mixing by Theorem~\ref{thm1.7}. But this contradicts Corollary~\ref{cor1.11}. The proof is complete.
\end{proof}

It should be noticed that ``$T$ amenable'' may be relaxed by ``$(T,X)$ admitting of an invariant measure'' in Corollaries~\ref{cor1.10} and \ref{cor1.11} and Theorem~\ref{thm1.12}.

As another application of Theorem~\ref{thm1.8} we will consider the chaotic dynamics of weak-mixing semiflows with amenable phase semigroups in $\S\ref{sec4.2}$; see Theorems~\ref{thm4.17} and \ref{thm4.19} and Corollary~\ref{cor4.21}.

\subsection{Veech's relations of surjective dynamics}\label{sec1.3}
\begin{defn}[{cf.~\cite{V65,V68, AM, AGN} for $T$ in groups and \cite{Dai} for any semiflows}]\label{def1.13}
Let $(T,X)$ be any semiflow.
\begin{enumerate}
\item[(1)] An $x\in X$ is called \textit{almost automorphic} (a.a.) for $(T,X)$, denoted $x\in P_{\!aa}(T,X)$, if and only if
$t_nx\to y, x_n^\prime\to x^\prime, t_nx_n^\prime=y$ implies $x=x^\prime$ for every net $\{t_n\}$ in $T$.

\item[(2)] If $x\in P_{\!aa}(T,X)$ and $\mathrm{cls}_XTx=X$, then $(T,X)$ is called an \textit{a.a. semiflow}.

\item[(3)] We say that $(x,x^\prime)$ is in Veech's relation $V$ of $(T,X)$, denoted $(x,x^\prime)\in V(X)$ or $x^\prime\in V[x]$, if there exist a net $x_n^\prime\to x^\prime$, a point $y\in X$, and a net $\{t_i\}$ in $T$ such that $t_nx\to y$ and $t_nx_n^\prime=y$. Then:
    \begin{itemize}
    \item $V[x]=\{x\}$ if and only if $x\in P_{\!aa}(T,X)$.
    \end{itemize}
\item[(4)] Given $x\in X$, define $D[x]$ for $(T,X)$ by
$$
D[x]={\bigcap}_{\varepsilon\in\mathscr{U}_X}D(x,\varepsilon),\quad \textrm{where }D(x,\varepsilon)=\mathrm{cls}_XA^{-1}Ax\textrm{ and }A=N_T(x,\varepsilon[x]).
$$
$D[x]$ is closed, and, of course, $x\in D[x]$. We will say $(x,y)\in D(X)$ if and only if $y\in D[x]$. Then
\begin{itemize}
    \item $D[x]=\{y\,|\,\exists\{t_n\}, \{s_n\}\textrm{ in }T, \{y_n\}\textrm{ in }X\textrm{ s.t. }(t_nx,s_nx,y_n)\to(x,x,y), t_ny_n=s_nx\}$.
    \end{itemize}
\begin{note}
$D(x,\varepsilon)$ is originally defined as $D(x,\varepsilon)=\mathrm{cls}_XAA^{-1}x$ by Veech in \cite{V68}. However, since $T$ is an abelian group in \cite{V68}, so our definition here agrees with Veech's in flows with abelian phase groups.
\end{note}
\end{enumerate}

Since an a.a. point is a distal point of any surjective $(T,X)$ (cf.~\cite[Lemma~1.8]{Dai} and \cite[Theorem~9.13]{Fur}), hence an a.a. surjective semiflow is point-distal and so minimal. We shall consider another Veech relation $U(X)$ in $\S$\ref{sec3.3}.
\end{defn}

\begin{defn}\label{def1.14}
Let $(T,X)$ and $(T,Y)$ be two any semiflows.
\begin{enumerate}
\item $\pi\colon X\rightarrow Y$ is called an \textit{epimorphism} between $(T,X)$ and $(T,Y)$, denoted $(T,X)\xrightarrow{\pi}(T,Y)$, if $\pi$ is continuous surjective with $\pi(tx)=t\pi(x)$ for all $t\in T$ and $x\in X$. In this case, $(T,Y)$ is called a \textit{factor} of $(T,X)$, and $(T,X)$ an \textit{extension} of $(T,Y)$.
\item $(T,X)\xrightarrow{\pi}(T,Y)$ is said to be of \textit{almost 1-1 type} if there exists a point $y\in Y$ such that $\pi^{-1}(y)$ is a singleton set.
\end{enumerate}
\end{defn}

It is clear that if $(T,X)$ is surjective (resp.~minimal), then its factors are also surjective (resp.~minimal). Of course, its factor need not be invertible if $(T,X)$ is invertible. Moreover, even if $(T,X)$ has an invertible non-trivial factor, $(T,X)$ itself need not be invertible.
As mentioned before, we will be mainly concerned with the maximal equicontinuous factor $(T,X_\textit{eq})$ of a minimal semiflow $(T,X)$.

Using a variation of a theorem of Bogoliouboff and F{\o}lner \cite[Theorem~4.1]{V68} that is valid only for discrete abelian groups, Veech proved the following theorem:
\begin{quote}
\textbf{Theorem~I} (cf.~\cite[Theorem~1.1]{V68}).~ {\it If $(T,X)$ is a minimal flow with $T$ an abelian group, then $D(X)=S_\textit{eq}(X)$.}
\end{quote}

Although Veech's proof of the above Theorem~I does not work for non-abelain flows, yet using different approaches\,---\,McMahon pseudo-metrics, we can obtain the following generalization in $\S$\ref{sec3} and $\S$\ref{sec5}.

\begin{thm}[{Veech's Structure Theorem for a.a. semiflows; cf.~\cite[Theorem~5.9]{Dai} for $T$ abelian}]\label{thm1.15}
Let $(T,X)$ be a minimal bi-continuous semiflow, which admits an invariant measure. Then:
\begin{enumerate}
\item[$(1)$] $D[x]=Q[x]$ for all $x\in X$.
\item[$(2)$] When $(T,X)$ is invertible, then $Q[x]=D[x]=\overline{V[x]}$ for all $x\in X$.
\item[$(3)$] $(T,X)$ is $\mathrm{a.a.}$ if and only if $\pi\colon(T,X)\rightarrow(T,X_\textit{eq})$ is of almost 1-1 type.
\end{enumerate}
Hence if $X$ is compact metric and $(T,X)$ is a.a., then $P_{\!aa}(T,X)$ is a residual subset of $X$.
\end{thm}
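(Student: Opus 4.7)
The plan is to prove the three assertions in sequence, using as the main tool the McMahon pseudo-metric $\rho$ on $X$ constructed in earlier sections of the paper from the invariant measure $\mu$. Recall that $\rho$ is $T$-equivariant and satisfies $\rho(x,y)=0$ iff $(x,y)\in S_\textit{eq}(X)$, and that $S_\textit{eq}(X)=Q(X)$ by Theorem~\ref{thm1.4}. The fact (cited just below Definition~\ref{def1.13}) that any a.a.~point of a surjective semiflow is distal will also enter part~(3).

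For (1), the inclusion $D[x]\subseteq Q[x]$ is essentially formal: if $(t_nx,s_nx,y_n)\to(x,x,y)$ with $t_ny_n=s_nx$, then taking $\tilde x_n:=y_n$ and $\tilde x_n^\prime:=x$ gives $t_n\tilde x_n=s_nx\to x$ and $t_n\tilde x_n^\prime=t_nx\to x$, witnessing $(y,x)\in Q(X)$. The converse $Q[x]\subseteq D[x]$ is where $\rho$ does the real work: given $y\in Q[x]$ and $\varepsilon\in\mathscr{U}_X$, $T$-equivariance of $\rho$ combined with the recurrence properties furnished by $\mu$ should produce $s,t\in N_T(x,\varepsilon[x])$ and a point $z\in X$ within $\varepsilon[y]$ with $tz=sx$. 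Bi-continuity is invoked precisely to control the set-valued preimage $t^{-1}$ as $sx$ varies in a small neighbourhood of $x$; directing the construction over $\varepsilon\in\mathscr{U}_X$ then manufactures the required nets.

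Part (2) reduces, after (1), to the equality $\overline{V[x]}=D[x]$ under invertibility. The inclusion $\overline{V[x]}\subseteq D[x]$ is cheap given (1): from the definitions, if $x_n^\prime\to x^\prime$, $t_nx\to y$, $t_nx_n^\prime=y$ then the constant net $\tilde x_n\equiv x$ against $x_n^\prime$ witnesses $x^\prime\in Q[x]=D[x]$, and $D[x]$ is closed. The reverse $D[x]\subseteq\overline{V[x]}$ is where invertibility is needed: starting from nets realising $y\in D[x]$, the identity $t_ny_n=s_nx$ together with $s_n^{-1}\in\langle T\rangle$ gives $(s_n^{-1}t_n)y_n=x$, and an Ellis-semigroup limit of $(s_n^{-1}t_n)$ combined with the density of $T$ in $E(T,X)$ yields genuine $T$-elements witnessing a $V$-configuration in any prescribed neighbourhood of $y$. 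For (3), $(\Leftarrow)$ is easy: if $\pi^{-1}(\pi(x_0))=\{x_0\}$, then any configuration $t_nx_0\to y$, $t_nx_n^\prime=y$, $x_n^\prime\to x^\prime$ projects under $\pi$ to the equicontinuous (hence distal) factor $(T,X_\textit{eq})$, forcing $\pi(x^\prime)=\pi(x_0)$ and thus $x^\prime=x_0$. For $(\Rightarrow)$, pick $x_0\in P_{\!aa}(T,X)$ with dense orbit; any $x\in\pi^{-1}(\pi(x_0))$ lies in $Q[x_0]=D[x_0]$ by (1), and the a.a.~hypothesis $V[x_0]=\{x_0\}$ combined with the bi-continuous analogue of $D[x_0]\subseteq\overline{V[x_0]}$ forces $x=x_0$. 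The residual consequence then follows because, when $X$ is compact metric, $Y_0:=\{y\in X_\textit{eq}\,|\,|\pi^{-1}(y)|=1\}$ is $T$-invariant and $G_\delta$ in the metric space $X_\textit{eq}$; it is non-empty by the implication just proved, hence dense $G_\delta$ by minimality of $(T,X_\textit{eq})$, and $\pi^{-1}(Y_0)\subseteq P_{\!aa}(T,X)$ is still dense $G_\delta$ in $X$.

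The main obstacle I expect is the step $Q[x]\subseteq D[x]$ in (1), which is exactly the difficulty flagged in the paper just after Theorem~\ref{thm1.4}: the defining identity $t_ny_n=s_nx$ must hold literally, not merely in the limit, and yet an individual $t\in T$ need not be invertible on $X$, so one has to reason through the set-valued inverses $t^{-1}$. Bi-continuity is the hypothesis designed to tame those preimages, and the heart of the argument should be a technical lemma producing $s,t\in T$ which both return $x$ to an $\varepsilon$-neighbourhood of itself while their combined effect $t^{-1}(sx)$ still meets every prescribed neighbourhood of $y$. Without bi-continuity one loses this control, which is precisely the reason (2) must strengthen the hypothesis to full invertibility.
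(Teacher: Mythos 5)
Your architecture matches the paper's (part (1) from the McMahon machinery plus bi-continuity, part (2) from an extra $V$--versus--$D$ comparison under invertibility, part (3) from the singleton-fibre characterization via Theorem~\ref{thm1.4}), and your formal inclusions $D[x]\subseteq Q[x]$, $\overline{V[x]}\subseteq D[x]$ and the $(\Leftarrow)$ direction of (3) are fine. But the two load-bearing steps are left as announcements rather than arguments. For $Q[x]\subseteq D[x]$ you write that equivariance of the pseudo-metric ``should produce'' $s,t\in N_T(x,\varepsilon[x])$ with $tz=sx$ and $z$ near $y$ --- that is precisely the statement to be proved. The paper gets it in two concrete stages: first, Lemma~\ref{lem3.7}, extracted from the proof of Theorem~\ref{thm1.4} (the joining $J=\overline{\bigcup_{x'\in V}T(y,x')}$ contains $(x,x)$, hence $(y,y)$ by minimality and invariance), shows that in the definition of $Q$ one of the two nets may be taken \emph{constant}, i.e. $Q[x]=U[x]$; second, Lemma~\ref{lem3.12} (bi-continuity) gives $U[x]=D[x]$. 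Without isolating the intermediate relation $U[x]$ and the ``constant net'' refinement, there is no visible route from the pseudo-metric to the exact identity $t_ny_n=s_nx$ demanded by $D$. Also, a small but real inaccuracy: the paper constructs no single pseudo-metric $\rho$ with $\rho(x,y)=0\Leftrightarrow(x,y)\in S_\textit{eq}(X)$; each $D_J$ only satisfies $S_\textit{eq}(X)\subseteq R_J$, and the proof of Theorem~\ref{thm1.4} must choose $J$ adapted to the pair $(x,y)$ and the neighbourhood $V$.

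The second gap is in your $D[x]\subseteq\overline{V[x]}$ step. The elements $s_n^{-1}t_n$ live in $\langle T\rangle$, not in $E(T,X)=\mathrm{cls}_{X^X}T$, so ``density of $T$ in $E(T,X)$'' does not let you approximate them by members of $T$; and even if it did, the definition of $V[x]$ requires the \emph{exact} identity $t_nx_n'=y$ with $t_n\in T$, which survives no limiting or approximation procedure of the kind you describe. The paper's Lemmas~\ref{lem5.1} and \ref{lem5.2} (Veech's construction) instead build the required elements of $T$ explicitly: using syndeticity of $N_T(x_0,\delta_n[x_0])$ at a distal (in particular a.a.) point, a summable sequence $\sum\delta_n<\infty$, and products $\alpha_n=\sigma_1\dotsm\sigma_{n-1}\tau_n\in T$ controlled by a telescoping estimate, one produces $y=\lim\alpha_nx_0$ and preimages $x_m'\in\alpha_m^{-1}y$ converging to the target point. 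This inductive construction is the actual content of part (2) and of the $(\Rightarrow)$ direction of part (3) (where one needs $Q[x_0]=D[x_0]\subseteq\overline{V[x_0]}=\{x_0\}$ at the a.a.\ point); your proposal correctly locates these difficulties but does not supply the mechanism that overcomes them.
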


\begin{cor}[{Veech for $T$ in amenable groups; unpublished notes}]\label{cor1.16}
Let $(T,X)$ be an invertible minimal semiflow with $T$ an amenable semigroup. Then:
\begin{enumerate}
\item[$(1)$] $Q[x]=D[x]=\overline{V[x]}$ for all $x\in X$.
\item[$(2)$] $(T,X)$ is a.a. if and only if $\pi\colon(T,X)\rightarrow(T,X_\textit{eq})$ is of almost 1-1 type.
\end{enumerate}
\end{cor}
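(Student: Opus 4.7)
The plan is to obtain Corollary~\ref{cor1.16} as an immediate specialization of Theorem~\ref{thm1.15}. To do this, I only need to verify that, in the situation of the corollary, the hypotheses of Theorem~\ref{thm1.15} are met, namely that $(T,X)$ is a minimal bi-continuous semiflow admitting an invariant measure.

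Minimality is given. For bi-continuity, I would invoke item~8 of the standing terminology together with the observation stated immediately after it: if each $t\in T$ is a self-homeomorphism of $X$, then the set-valued map $x\rightsquigarrow t^{-1}x$ is in fact single-valued and continuous in the ordinary sense, and so trivially continuous as a set-valued map. Hence invertibility implies bi-continuity. For the existence of an invariant measure, I would appeal directly to the definition of amenability given in item~6: $T$ amenable means that \emph{every} semiflow on a compact $\textrm{T}_2$-space with phase semigroup $T$ admits a $T$-invariant Borel probability measure; in particular our $(T,X)$ does.

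With these two reductions in hand, Corollary~\ref{cor1.16}(1) is precisely Theorem~\ref{thm1.15}(2) and Corollary~\ref{cor1.16}(2) is precisely Theorem~\ref{thm1.15}(3), applied to the invertible $(T,X)$. There is no real obstacle beyond this bookkeeping; the genuine mathematical difficulty lies upstream in establishing Theorem~\ref{thm1.15} itself (carried out in $\S\ref{sec3}$ and $\S\ref{sec5}$ via the McMahon pseudo-metric machinery), and the purpose of Corollary~\ref{cor1.16} is simply to record the conclusion under the more classical hypotheses of amenability plus invertibility, which recover Veech's unpublished result for amenable groups.
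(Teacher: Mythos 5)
Your proposal is correct and is essentially the paper's own proof: the paper likewise deduces the corollary by noting that amenability of $T$ supplies the invariant measure and then citing Theorem~\ref{thm1.15}, with the fact that invertibility implies bi-continuity already recorded in the standing terminology. Nothing further is needed.
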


\begin{proof}
Since $T$ is amenable, $(T,X)$ admits an invariant measure. Then Corollary~\ref{cor1.16} follows from Theorem~\ref{thm1.15}.
\end{proof}

\begin{cor}\label{cor1.17}
Let $(T,X)$ be a minimal bi-continuous semiflow, which admits an invariant measure. Then $(T,X)$ is equicontinuous iff all points are a.a. points.
\end{cor}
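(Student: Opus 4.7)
The forward direction (equicontinuous $\Rightarrow$ every point is a.a.) uses only the equicontinuity hypothesis. Given $x\in X$ and nets $t_nx\to y$, $x_n'\to x'$ with $t_nx_n'=y$, for each $\varepsilon\in\mathscr{U}_X$ I would pick $\delta\in\mathscr{U}_X$ with $T\delta\subseteq\varepsilon$; then $(x_n',x')\in\delta$ eventually yields $(t_nx_n',t_nx')\in\varepsilon$, i.e.\ $t_nx'\in\varepsilon[y]$, eventually. Since $\varepsilon$ is arbitrary and $X$ is $\textrm{T}_2$, any cluster point of $\{t_nx'\}$ must be $y$, so $t_nx'\to y$. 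Combined with $t_nx\to y$ this makes $(x,x')$ proximal; since equicontinuous semiflows are distal, $x=x'$. Hence $V[x]=\{x\}$ and $x\in P_{\!aa}(T,X)$ for every $x\in X$.

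For the reverse direction my plan is to bootstrap the bi-continuity hypothesis to full invertibility so that part~(2) of Theorem~\ref{thm1.15} becomes applicable. Assume every $x\in X$ is a.a. Bi-continuity implies surjectivity, and the remark recorded just after Definition~\ref{def1.13} (citing \cite[Lemma~1.8]{Dai} and \cite[Theorem~9.13]{Fur}) ensures that an a.a.\ point of a surjective semiflow is a distal point; hence every point of $X$ is distal. Lemma~D then yields that $(T,X)$ is a distal semiflow, and the comment after item~1 of Definition~\ref{def1.3} (citing \cite[Theorem~1.15]{AD}) upgrades this to $(T,X)$ being invertible. All the hypotheses of Theorem~\ref{thm1.15}(2) are now satisfied, giving $Q[x]=\overline{V[x]}$ for every $x\in X$. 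The a.a.\ hypothesis forces $V[x]=\{x\}$ by Definition~\ref{def1.13}(3), hence $Q[x]=\{x\}$, i.e.\ $Q(X)=\varDelta_X$; and Theorem~\ref{thm1.4} yields $S_{\textit{eq}}(X)=Q(X)=\varDelta_X$, which is precisely equicontinuity.

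The only substantive obstacle is exactly this passage from bi-continuity to invertibility, needed in order to invoke part~(2) of Theorem~\ref{thm1.15} rather than merely part~(1); part~(1) alone only gives $D[x]=Q[x]$, which does not directly exploit the vanishing of $V[x]$ imposed by the a.a.\ hypothesis. The two-step upgrade ``a.a.\ point $\Rightarrow$ distal point'' followed by ``distal semiflow $\Rightarrow$ invertible'' is what makes the reverse direction go through.
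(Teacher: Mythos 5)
Your proof is correct, and both directions deviate somewhat from the paper's own argument, which runs entirely through the equivalence $Q(X)=\varDelta_X$: for ``only if'' the paper invokes Lemma~\ref{lem2.3} to get $Q(X)=\varDelta_X$ and then Theorem~\ref{thm1.15} (via $\{x\}\subseteq V[x]\subseteq Q[x]$) to conclude pointwise almost automorphy, whereas you give a direct $\varepsilon$--$\delta$ argument showing the defining nets force $(x,x')$ proximal and then kill the pair by distality of surjective equicontinuous semiflows; your version is more elementary and self-contained, at the cost of not reusing the machinery already in place. For ``if'' the paper goes straight to $Q(X)=\varDelta_X$ via Lemma~\ref{5.4}, whose underlying Lemma~\ref{lem5.2} is formulated precisely for bi-continuous (not necessarily invertible) semiflows with an a.a.\ point, so no invertibility upgrade is needed there; you instead bootstrap to invertibility (a.a.\ $\Rightarrow$ distal point by the remark after Definition~\ref{def1.13}, then Lemma~D and Lemma~\ref{lem2.2}(1)) so as to quote Theorem~\ref{thm1.15}(2). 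That detour is legitimate --- the paper itself notes after Theorem~\ref{thm1.15} that a bi-continuous a.a.\ semiflow is automatically invertible --- and it has the minor virtue of making the hypotheses of part~(2) manifestly available, but it is not logically necessary given Lemma~\ref{5.4}. One small point worth making explicit in your forward direction: the distality you use requires surjectivity (Lemma~\ref{lem2.2}(2)), which here is supplied either by the definition of bi-continuity or by Lemma~\ref{lem2.4A}.
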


This will be proved in $\S\ref{sec5}$. We note that Corollary~\ref{cor1.17} in the case that $T$ is an abelian group and $X$ a compact metric space is \cite[Corollary~21]{AM}. Moreover, the statement of Corollary~\ref{cor1.17} still holds without the condition ``which admits an invariant measure'' by using different approaches (cf.~\cite[Theorem~4.5]{Dai}).

\begin{cor}\label{cor1.18}
Let $(T,X)$ be a minimal bi-continuous semiflow, which admits an invariant measure. If $(T,X)$ is a.a., then it has l.a.p. points.
\end{cor}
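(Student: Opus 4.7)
The plan is to produce an explicit locally almost periodic point using the structural information supplied by Theorem~\ref{thm1.15}(3): since $(T,X)$ is a.a., the canonical projection $\pi\colon(T,X)\rightarrow(T,X_\textit{eq})$ is of almost $1$-$1$ type, so fix $y\in X_\textit{eq}$ with $\pi^{-1}(y)=\{x_0\}$. I claim $x_0$ is l.a.p. for $(T,X)$, and I would prove this in two steps: first show that $y$ is l.a.p. in the factor, and then lift the property through the singleton fibre.

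First I would establish the ``downstairs'' statement. By Theorem~\ref{thm1.4}, $(T,X_\textit{eq})$ is equicontinuous, invertible and minimal. Let $W$ be an open neighborhood of $y$ in $X_\textit{eq}$. Choose $\varepsilon\in\mathscr{U}_{X_\textit{eq}}$ with $(\varepsilon\circ\varepsilon)[y]\subseteq W$, and then, using equicontinuity, pick $\delta\in\mathscr{U}_{X_\textit{eq}}$ with $\delta\subseteq\varepsilon$ and $T\delta\subseteq\varepsilon$. Because $(T,X_\textit{eq})$ is minimal, $y$ is a minimal point, and so the return time set $A:=N_T(y,\delta[y])$ is a discretely syndetic subset of $T$ (this is the standard minimal-point property recalled in the excerpt's discussion of discrete syndeticity). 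For $t\in A$ and $z\in\delta[y]$ we have $(y,ty)\in\delta\subseteq\varepsilon$ and $(ty,tz)\in\varepsilon$, hence $(y,tz)\in\varepsilon\circ\varepsilon$, so $tz\in W$. Setting $V':=\delta[y]$, this gives $AV'\subseteq W$, so $y$ is l.a.p. in $X_\textit{eq}$.

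Next I would lift back to $X$. Let $U$ be an arbitrary open neighborhood of $x_0$ in $X$. Since $X$ and $X_\textit{eq}$ are compact Hausdorff, $\pi$ is a closed map; combined with $\pi^{-1}(y)=\{x_0\}$, the image $\pi(X\setminus U)$ is closed in $X_\textit{eq}$ and misses $y$, so there is an open $W\ni y$ with $\pi^{-1}(W)\subseteq U$. Apply the previous step to this $W$ to obtain a discretely syndetic $A\subseteq T$ and an open $V'\ni y$ with $AV'\subseteq W$. Put $V:=\pi^{-1}(V')$, an open neighborhood of $x_0$ in $X$. By equivariance, for any $t\in A$ and $z\in V$ one has $\pi(tz)=t\pi(z)\in AV'\subseteq W$, whence $tz\in\pi^{-1}(W)\subseteq U$. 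Thus $AV\subseteq U$, proving $x_0$ is l.a.p. for $(T,X)$.

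The genuine work has already been done in Theorem~\ref{thm1.15}; the remaining content of the corollary is essentially a lifting argument along an almost $1$-$1$ projection to an equicontinuous factor. The only delicate point I anticipate is being sure that the return time set $N_T(y,\delta[y])$ is \emph{discretely} syndetic in the topological semigroup $T$ (rather than merely syndetic in a group-theoretic sense); this is standard for minimal points, but since $T$ need not be a group one should quote the semigroup version as recalled in the paper's introduction.
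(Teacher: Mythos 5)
Your proposal is correct and follows essentially the same route as the paper: invoke Theorem~\ref{thm1.15}(3) to get that $\pi\colon(T,X)\rightarrow(T,X_\textit{eq})$ is almost 1-1, observe that the equicontinuous invertible minimal factor is l.a.p., and lift the property through the singleton fibre. The paper's proof is a three-line sketch of exactly this; you have merely (and correctly) supplied the details of the two steps it leaves implicit.
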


\begin{proof}
By Theorem~\ref{thm1.15}, $\pi\colon X\rightarrow X_\textit{eq}$ is of almost 1-1 type. Since $(T,X_\textit{eq})$ is equicontinuous invertible, it is l.a.p. and so $(T,X)$ has l.a.p. points. The proof is complete.
\end{proof}

Theorem~\ref{thm1.15} will be proved in $\S$\ref{sec3} and $\S$\ref{sec5}. In fact, (1) of Theorem~\ref{thm1.15} follows from Theorem~\ref{thm3.10} and Corollary~\ref{cor3.13}; (2) of Theorem~\ref{thm1.15} is Theorem~\ref{5.3}; and (3) of Theorem~\ref{thm1.15} is Theorem~\ref{thm5.5}. In fact, since $(T,X)$ is bi-continuous, $(T,X)$ must be invertible when it is a.a by \cite[Lemma~1.1]{Dai}.
\section{Preliminary lemmas}\label{sec2}
To prove our main results Theorem~\ref{thm1.4}, Theorem~\ref{thm1.7} and Theorem~\ref{thm1.15}, we will need some preliminary lemmas and theorems. Among them, Theorem~\ref{thm2.7A} asserts that an epimorphism can transfer $Q(X)$ of a minimal semiflow $(T,X)$ onto the regionally proximal relation of its invertible factor. And Theorem~\ref{thm2.10} is a supplement of Corollary~\ref{cor1.11}.

\subsection{Preliminary notions}
Let $(T,X)$ be a semiflow with phase semigroup $T$ and with compact $\textrm{T}_2$ phase space $X$. We have introduced some necessary notions in $\S\ref{sec1}$. Here we need to introduce another one.

\begin{defn}\label{def2.1}
We shall say that $(T,X)$ admits an \textit{invariant quasi-regular Borel probability measure} $\mu$, provided that $\mu$ is an invariant Borel probability measure on $X$ such that
\begin{itemize}
\item $\mu$ is \textit{quasi-regular} in the sense that for any Borel set $B$ and $\varepsilon>0$ one can find an open set $U$ with $B\subseteq U$ and $\mu(U-B)<\varepsilon$, and for any open set $U$ and $\varepsilon>0$ one can find a compact set $K$ with $K\subset U$ and $\mu(U-K)<\varepsilon$.
\end{itemize}

Since $X$ is compact $\textrm{T}_2$ here, a quasi-regular Borel probability measure must be regular. Moreover, it is well known that if $T$ is an amenable semigroup or $(T,X)$ is distal, then $(T,X)$ always admits an invariant Borel probability measure.
\end{defn}

Recall that a point is called \textit{minimal} if its orbit closure is a minimal subset.
Since $X$ is compact, then by Zorn's lemma, minimal points always exist. A minimal point is also called an ``almost periodic point'' or a ``uniformly recurrent point'' in some works like \cite{E69, Fur, Aus}. If $(T,X)$ is a minimal semiflow, then $\varDelta_X$ is a minimal set of $(T,X\times X)$.

If the set of minimal points is dense in $X$, then we will say $(T,X)$ has a \textit{dense set of minimal points} or \textit{with dense almost periodic points}.

\subsection{Preliminary lemmas}
Now we will introduce and establish in this subsection some preliminary lemmas needed in our later discussion.

\begin{lem}[\cite{AD}]\label{lem2.2}
Let $(T,X)$ be a semiflow. Then:
\begin{enumerate}
\item[$(1)$] If $(T,X)$ is distal, then it is invertible.
\item[$(2)$] If $(T,X)$ is surjective equicontinuous, then it is distal.
\item[$(3)$] If $(T,X)$ is invertible equicontinuous, then $(\langle T\rangle,X)$ is an equicontinuous flow.
\end{enumerate}
\end{lem}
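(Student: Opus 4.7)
All three parts can be approached uniformly via the Ellis semigroup $E=E(T,X):=\mathrm{cls}\,T$ in $X^X$ (product topology), exploiting two classical facts: distality forces $E$ to be a group (of possibly discontinuous self-maps of $X$), while equicontinuity forces the elements of $E$ to be continuous. The surjectivity hypothesis in (2) will upgrade ``continuous'' to ``homeomorphism''.

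For (1), injectivity of each $t\in T$ is immediate. If $tx=tx'$ with $x\ne x'$, the pair $t(x,x')\in\varDelta_X$ meets every entourage, contradicting the existence of $\varepsilon\in\mathscr{U}_X$ with $s(x,x')\notin\varepsilon$ for all $s\in T$ (take $s=t$). For surjectivity I would invoke the standard fact that a distal action has $E$ a group whose unique idempotent is $\mathrm{id}_X$; given $t\in T\subseteq E$ with group inverse $p\in E$, the identity $tp=\mathrm{id}_X$ pointwise on $X$ exhibits $p(x)$ as a preimage of $x$ under $t$.

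For (2), given $\alpha\in\mathscr{U}_X$ pick a closed $\alpha_0\in\mathscr{U}_X$ with $\alpha_0\subseteq\alpha$ and a $\delta\in\mathscr{U}_X$ with $T\delta\subseteq\alpha_0$; then pointwise limits give $p\delta\subseteq\alpha_0\subseteq\alpha$ for every $p\in E$, so each element of $E$ is (uniformly) continuous. Surjectivity of each $p\in E$ follows by a diagonal argument: writing $p=\lim_i t_i$ and using surjectivity of $T$, for $y\in X$ choose $x_i\in X$ with $t_ix_i=y$, pass to a subnet $x_i\to x$, and use the common equicontinuity modulus to conclude $t_ix_i\to p(x)$, so $p(x)=y$. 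Hence $E$ is a compact $\mathrm{T}_2$ semigroup of continuous bijections of $X$, so of self-homeomorphisms, and cancellation promotes $E$ to a group. If $(x,x')$ were proximal with $x\ne x'$, a net $t_i(x,x')\to(z,z)$ would yield $p\in E$ with $p(x)=p(x')=z$; injectivity of $p$ forces $x=x'$, a contradiction.

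For (3), an invertible equicontinuous semiflow is in particular surjective equicontinuous, so by (2) it is distal, and the group $\langle T\rangle$ acts on $X$ by homeomorphisms. The equicontinuity of this flow is inherited from $E$: the analysis in (2) exhibits $E$ as an equicontinuous compact group of homeomorphisms, and since the group inverse in $E$ of $t\in T$ coincides with the set-theoretic inverse $t^{-1}$, we have $\langle T\rangle\subseteq E$, so the same modulus $\delta\mapsto\alpha$ works for $\langle T\rangle$. The main technical obstacle I anticipate is the surjectivity step for $p\in E$ in (2), which genuinely requires both equicontinuity (to pass the limit $t_ix_i\to p(x)$) and surjectivity of each $t_i\in T$; this is exactly why the hypothesis ``surjective'' cannot be removed from (2).
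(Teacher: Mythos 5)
The paper offers no proof of Lemma~\ref{lem2.2} to compare against---it is imported wholesale from \cite{AD}---so your argument has to stand on its own. Parts (1) and (3) do: the Ellis-semigroup route in (1) (distality forces every idempotent of $E=\mathrm{cls}_{X^X}T$ to be $\mathrm{id}_X$, the closed left ideal $Ep$ contains a minimal left ideal and hence an idempotent, so every $p\in E$ has a left inverse, $E$ is a group, and $tp=\mathrm{id}_X$ gives surjectivity of $t$) is standard and goes through verbatim for semigroup actions on compact $\mathrm{T}_2$ spaces; your closed-entourage argument that $E$ is uniformly equicontinuous and your diagonal argument that each $p\in E$ is surjective are both correct; and (3) follows once (2) is in place and the group inverse of $t$ in $E$ is identified with $t^{-1}$.

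The genuine gap is in (2), at the sentence ``Hence $E$ is a compact $\mathrm{T}_2$ semigroup of continuous \emph{bijections}.'' At that point you have established continuity and surjectivity of every $p\in E$, but not injectivity---and injectivity of the elements of $E$ is exactly equivalent to distality of $(T,X)$, since $(x,x')$ is proximal if and only if $p(x)=p(x')$ for some $p\in E$. Your final sentence then derives distality \emph{from} injectivity, so as written the argument assumes its conclusion. The missing step is short but is the real content of the lemma: take any idempotent $u\in E$ (Ellis--Namakura); since $u$ is surjective, every $y\in X$ has the form $u(w)$, whence $u(y)=u(u(w))=u(w)=y$, so $u=\mathrm{id}_X$; then for any $p\in E$ the closed left ideal $Ep$ contains a minimal left ideal, hence an idempotent, hence $\mathrm{id}_X$, so $p$ has a left inverse in $E$ and is injective (this also supplies the group structure, which ``cancellation'' alone would not, since a subsemigroup of $\mathrm{Sym}(X)$ need not be a group without the compactness/idempotent input). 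Note that surjectivity enters essentially here a second time---not only in the diagonal argument you flag, but in forcing the idempotents of $E$ to be trivial: the non-surjective equicontinuous example on $[0,1]$ in \S\ref{sec1} is proximal precisely because its enveloping semigroup contains the constant (idempotent) map onto $\{0\}$.
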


\begin{lem}[\cite{DX,AD}]\label{lem2.3}
$(T,X)$ is an equicontinuous surjective semiflow if and only if $Q(X)=\varDelta_X$ if and only if $(\langle T\rangle,X)$ is an equicontinuous flow.
\end{lem}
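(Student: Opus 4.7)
The statement is a cyclic chain of three conditions on $(T,X)$: equicontinuous surjective (a); $Q(X)=\varDelta_X$ (b); $(\langle T\rangle,X)$ equicontinuous flow (c). The plan is to prove (a)$\Rightarrow$(c)$\Rightarrow$(b)$\Rightarrow$(a), with the last implication being the heart of the argument.

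The easy implications come from Lemma~\ref{lem2.2} and routine bookkeeping. For (a)$\Rightarrow$(c), I concatenate parts (2), (1), (3) of Lemma~\ref{lem2.2}: surjective equicontinuous implies distal, distal implies invertible, and invertible equicontinuous upgrades to the equicontinuous flow $(\langle T\rangle,X)$. For (c)$\Rightarrow$(b), I fix $(x,x')\in Q(X)$ with witnesses $x_n\to x$, $x_n'\to x'$, $t_n\in T$, and $t_nx_n,t_nx_n'\to z$. Given a closed $\varepsilon\in\mathscr{U}_X$, I pick $\delta$ with $\langle T\rangle\delta\subseteq\varepsilon$; because $\langle T\rangle$ is a group, the forward estimate converts to a backward one: if $t\in\langle T\rangle$ and $(ty,ty')\in\delta$, then $(y,y')=(t^{-1}(ty),t^{-1}(ty'))\in\langle T\rangle\delta\subseteq\varepsilon$. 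Applying this at $t_n$, eventually $(t_nx_n,t_nx_n')\in\delta$, so $(x_n,x_n')\in\varepsilon$, whence $(x,x')\in\varepsilon$ and then $x=x'$.

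The substantive direction (b)$\Rightarrow$(a) exploits the identity $Q(X)=\bigcap_{\alpha}\mathrm{cls}_{X\times X}\bigcup_{t\in T}t^{-1}\alpha$ already displayed in \S\ref{sec1.1}: the hypothesis $Q(X)=\varDelta_X$ is equivalent to the \emph{backward} uniform estimate that for every $\varepsilon\in\mathscr{U}_X$ there exists an open $\delta\in\mathscr{U}_X$ with $(tx,ty)\in\delta\Rightarrow (x,y)\in\varepsilon$, uniformly in $t\in T$ and $x,y\in X$. Since the proximal relation is contained in $Q(X)$, this at once forces $(T,X)$ to be distal, so $(T,X)$ is invertible by Lemma~\ref{lem2.2}(1) and, by Ellis' theorem, the enveloping semigroup $E=E(T,X)\subseteq X^X$ is a group. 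I then extend the backward estimate from $T$ to $E$ by pointwise limits: if $s=\lim t_\lambda$ in $X^X$ and $(sx,sy)\in\delta$ with $\delta$ open, eventually $(t_\lambda x,t_\lambda y)\in\delta$, forcing $(x,y)\in\varepsilon$. Finally I invert through the group structure of $E$: applying the extended backward estimate to $s^{-1}\in E$ and substituting $u=s^{-1}x$, $v=s^{-1}y$ yields $(u,v)\in\delta\Rightarrow (su,sv)\in\varepsilon$ uniformly over $s\in E$, i.e. equicontinuity of $E$. Restricting to $T\subseteq E$ produces (a); and since every element of an equicontinuous group of self-maps of a compact $\textrm{T}_2$-space is a homeomorphism, $\langle T\rangle\subseteq E$ also acts equicontinuously, recovering (c).

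The main obstacle I anticipate is precisely the transition from $T$ to $E$ and the subsequent inversion: one must track open versus closed entourages carefully so that pointwise limits cooperate with the backward estimate, and one must invoke Ellis' theorem in the semigroup setting on a possibly non-metrizable compact $\textrm{T}_2$ space. These steps replace the automatic ``group symmetry'' enjoyed by flows, and together they carry the entire asymmetry of the semigroup case.
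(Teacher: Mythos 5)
Your proof is correct, but it takes a genuinely different route from the paper's. The paper imports Lemma~\ref{lem2.3} from \cite{DX,AD} (where, as the author notes, it is proved via a.a. points and Veech's relation $V(X)$), and the alternative in-text proof sketched in $\S$\ref{sec2} derives the hard implication $Q(X)=\varDelta_X\Rightarrow$ equicontinuity from Corollary~\ref{cor2.9A}, i.e.\ from the identification of $S_{\textit{eq}}(X)$ as the smallest closed invariant equivalence relation containing $Q(X)$ --- an argument that could not serve as the primary proof here, since the proof of Corollary~\ref{cor2.9A} itself invokes Lemma~\ref{lem2.3}. Your argument is instead self-contained modulo Lemma~\ref{lem2.2} and the semigroup version of Ellis's theorem: you convert $Q(X)=\varDelta_X$ by compactness of $X\times X$ into the uniform backward estimate $\bigcup_{t\in T}t^{-1}\delta\subseteq\varepsilon$ with $\delta$ open, deduce distality from $P(X)\subseteq Q(X)$, extend the backward estimate from $T$ to the enveloping semigroup $E(X)$ by pointwise limits (the openness of $\delta$ is exactly what makes the limit step legitimate), and then invert the estimate through the group structure of $E(X)$, which also yields surjectivity of each $t\in T$. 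This buys a direct proof that never passes through the maximal equicontinuous factor; what it costs is the appeal to the fact that $E(X)$ is a group for a distal semiflow on a compact $\mathrm{T}_2$ space, which is not stated in this paper but is precisely the machinery behind Lemma~\ref{lem2.2}(1) and Theorem~C and is available in \cite{AD}, so the citation is legitimate. The remaining implications --- (a)$\Rightarrow$(c) by concatenating parts (2), (1), (3) of Lemma~\ref{lem2.2}, and (c)$\Rightarrow$(b) by using the group symmetry of $\langle T\rangle$ with a closed $\varepsilon$ and an open $\delta$ --- are routine and correct as you give them.
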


\begin{lem}[\cite{AD}]\label{lem2.4A}
Let $(T,X)$ be a minimal semiflow. If $(T,X)$ admits an invariant measure, then $(T,X)$ is surjective.
\end{lem}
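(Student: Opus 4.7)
The plan is to combine two observations about the invariant Borel probability measure $\mu$: first, that the image $tX$ has full measure for every $t\in T$, and second, that $\mathrm{supp}(\mu)=X$ because $(T,X)$ is minimal. Together these force the open set $X\setminus tX$ to be empty.

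More precisely, I would first fix an arbitrary $t\in T$. Since the phase map $x\mapsto tx$ is continuous and $X$ is compact $\mathrm{T}_2$, the image $tX$ is compact, hence closed, so $X\setminus tX$ is open. Note the tautological identity $t^{-1}(tX)=X$ (every point of $X$ is sent into $tX$), so by $T$-invariance of $\mu$ we obtain $\mu(tX)=\mu(t^{-1}(tX))=\mu(X)=1$, and therefore $\mu(X\setminus tX)=0$.

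Next, I would show that $\mu$ has full support on $X$. The set $\mathrm{supp}(\mu)$ is closed and non-empty by compactness of $X$, and I need to check that it is $T$-invariant. If $x\in\mathrm{supp}(\mu)$ and $s\in T$, then for any open neighborhood $U$ of $sx$, continuity of $y\mapsto sy$ at $x$ gives an open neighborhood $V\ni x$ with $sV\subseteq U$, i.e.\ $V\subseteq s^{-1}U$; hence $\mu(s^{-1}U)\ge\mu(V)>0$, and invariance gives $\mu(U)=\mu(s^{-1}U)>0$, proving $sx\in\mathrm{supp}(\mu)$. So $\mathrm{supp}(\mu)$ is a non-empty closed $T$-invariant subset of $X$, and minimality forces $\mathrm{supp}(\mu)=X$, i.e.\ every non-empty open subset of $X$ has positive $\mu$-measure.

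Combining the two steps: the open set $X\setminus tX$ has $\mu$-measure zero, but if it were non-empty it would have positive $\mu$-measure by full support\,---\,a contradiction. Hence $tX=X$ for every $t\in T$, which is exactly the surjectivity of $(T,X)$. No step looks genuinely hard; the only mild subtlety is verifying that $\mathrm{supp}(\mu)$ is $T$-invariant in the one-sided semigroup setting where $t$ need not be a homeomorphism, but the continuity-plus-invariance argument above handles it cleanly without appealing to any regularity of $\mu$ beyond being a Borel probability measure.
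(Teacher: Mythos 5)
Your proof is correct, and it is essentially the argument this paper uses elsewhere: the lemma itself is only cited from [AD], but the identical measure-theoretic step appears in the proof of Lemma~4.5 (``since $X\setminus tX$ is open and $tX$ is Borel, if $tX\neq X$ then $1>\mu(tX)=\mu(t^{-1}tX)=\mu(X)=1$, a contradiction''), with full support of some invariant measure supplied there by the ergodic-center hypothesis and here, as you correctly argue, by minimality plus the invariance of $\mathrm{supp}(\mu)$.
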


\begin{lem}[{cf.~\cite[Lemma~7.6]{Aus} for $T$ a group}]\label{lem2.5A}
Let $\pi\colon(T,X)\rightarrow(T,Y)$ be an epimorphism where $(T,X)$ contains a dense set of minimal points and $(T,Y)$ is minimal invertible, and let $\varepsilon\in\mathscr{U}_X$. Then $(\pi\times\pi)(T^{-1}\varepsilon)$ belongs to $\mathscr{U}_Y$.
\end{lem}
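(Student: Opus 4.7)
The plan is to reduce to the case that $(T,X)$ is itself minimal, to produce a single non-empty open $U_0\subseteq Y$ already satisfying $U_0\times U_0\subseteq(\pi\times\pi)(\varepsilon)$, and then to spread $U_0$ across $Y$ by finitely many $T$-translates. For the reduction, pick any minimal subset $M\subseteq X$; since $\pi(M)$ is a non-empty closed $T$-invariant subset of the minimal space $Y$, it coincides with $Y$, so $\pi|_M\colon(T,M)\to(T,Y)$ is an epimorphism between minimal semiflows. Because
\[
(\pi|_M\times\pi|_M)(T^{-1}(\varepsilon\cap(M\times M)))\subseteq(\pi\times\pi)(T^{-1}\varepsilon),
\]
we may assume from the start that $(T,X)$ itself is minimal.

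To produce $U_0$, choose a symmetric open $\eta\in\mathscr{U}_X$ with $\overline{\eta}\circ\overline{\eta}\subseteq\varepsilon$, fix any $x_0\in X$, and set $V=\eta[x_0]$. By minimality of $(T,Y)$ and invertibility, $\{t^{-1}\pi(V)\}_{t\in T}$ covers $Y$; compactness of $Y$ yields a finite subcover $Y=\bigcup_{i=1}^{k}t_i^{-1}\overline{\pi(V)}$. Since a finite union of closed nowhere dense subsets of the compact Hausdorff (hence Baire) space $Y$ cannot be all of $Y$, each $t_i^{-1}\overline{\pi(V)}$ must have non-empty interior, so $U_0:=\mathrm{int}\,\overline{\pi(V)}$ is non-empty and open. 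Every $y\in U_0\subseteq\overline{\pi(V)}$ is a limit of points $\pi(x_n)$ with $x_n\in V$, and a convergent subnet of the $x_n$ (compactness of $X$) delivers a lift of $y$ inside $\overline{V}\subseteq\overline{\eta}[x_0]$. Hence any $y_1,y_2\in U_0$ admit lifts $z_1,z_2\in\overline{\eta}[x_0]$, giving $(z_1,z_2)\in\overline{\eta}\circ\overline{\eta}\subseteq\varepsilon$ and therefore $U_0\times U_0\subseteq(\pi\times\pi)(\varepsilon)$.

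For the spreading, $\{t^{-1}U_0\}_{t\in T}$ is by minimality an open cover of $Y$, so a finite subcover $Y=\bigcup_{j=1}^{\ell}t_j^{-1}U_0$ exists, and $\bigcup_j t_j^{-1}(U_0\times U_0)$ is an open neighborhood of $\varDelta_Y$ in $Y\times Y$. For any $(y_1,y_2)$ in it, pick $j$ with $(t_jy_1,t_jy_2)\in U_0\times U_0\subseteq(\pi\times\pi)(\varepsilon)$ and take $\varepsilon$-close lifts $z_1,z_2\in\overline{\eta}[x_0]$ of $t_jy_1,t_jy_2$. Pulling back along $t_j$ produces $x_r\in\pi^{-1}(y_r)$ with $t_jx_r=z_r$; the invertibility of $t_j$ on $Y$ forces $\pi(x_r)=y_r$ once $t_jx_r=z_r$ is secured, so $(y_1,y_2)\in(\pi\times\pi)(t_j^{-1}\varepsilon)\subseteq(\pi\times\pi)(T^{-1}\varepsilon)$.

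The main obstacle is exactly this concluding pull-back: in a minimal semiflow with non-abelian $T$ the individual maps $t\colon X\to X$ need not be surjective, so a priori a lift $(z_1,z_2)\in\varepsilon$ of $(t_jy_1,t_jy_2)$ need not lie in $t_j(X)\times t_j(X)$. This is overcome by exploiting the key consequence of $Y$'s invertibility, namely $\pi(t(M))=t(\pi(M))=t(Y)=Y$ for every $t\in T$, which tells us that $\pi|_{t(M)}\colon t(M)\to Y$ is still an epimorphism; refining the approximating nets in the second paragraph to lie in the closed subsystem $t_j(M)$ forces the limiting lifts $z_r$ into $t_j(M)$, so the required $x_r$ with $t_jx_r=z_r$ can be extracted. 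This is the only step where the full strength of both hypotheses (dense minimal points in $X$ and invertibility of $Y$) is used together.
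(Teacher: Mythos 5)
Your reduction to a minimal $X$ and your construction of $U_0$ with $U_0\times U_0\subseteq(\pi\times\pi)(\varepsilon)$ are essentially sound, modulo two small repairs: the sets $t^{-1}\pi(V)$ need not be open in $Y$ (images under $\pi$ of open sets need not be open), so the finite subcover has to be extracted from the open cover $\{t^{-1}V\}_{t\in T}$ of the compact space $X$ and then pushed down by $\pi$; and the Baire argument only gives \emph{some} $i$ with $\mathrm{Int}_Y\,t_i^{-1}\overline{\pi(V)}\neq\emptyset$, after which invertibility of $t_i$ on $Y$ upgrades this to $\mathrm{Int}_Y\overline{\pi(V)}\neq\emptyset$. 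The genuine gap is the concluding pull-back, which you correctly isolate as the crux, but your proposed repair does not close it. First, $t_j(M)$ is in general not $T$-invariant when $T$ is non-abelian (one only has $s(t_jM)=(st_j)M\subseteq M$, not $\subseteq t_jM$), so it is not a ``closed subsystem'' and $\pi|_{t_j(M)}$ is not an epimorphism of semiflows. Second, the elements $t_j$ are produced only after $U_0$ has been fixed, so ``refining the approximating nets to lie in $t_j(M)$'' would change $U_0$ and hence the $t_j$ themselves --- a circularity. Third, and decisively: $\pi(t_jM)=Y$ guarantees that each $y$ has \emph{some} preimage in $t_jM$, but not one lying in $\overline{V}=\overline{\eta[x_0]}$; to force the limit points $z_r$ into $t_jM$ you would need $U_0\subseteq\overline{\pi(V\cap t_jM)}$, and $V\cap t_jM$ can perfectly well be empty (for a minimal non-surjective semiflow the translates $tM$ may be small sets far from $x_0$). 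So the required $x_r$ with $t_jx_r=z_r$ cannot be extracted this way. Note that if each $t\colon X\rightarrow X$ is surjective the pull-back is immediate (surjectivity on $X$ gives $x_r$ with $t_jx_r=z_r$, and injectivity of $t_j$ on $Y$ then forces $\pi(x_r)=y_r$); every application of the lemma in the paper does have $(T,X)$ surjective, by hypothesis or via Lemma~\ref{lem2.4A}, but the lemma as stated does not assume it.

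For comparison, the paper arranges the Baire step so that no inversion in $X$ is attempted at the stage where the good open set is produced: it covers the minimal set $M$ by the sets $\tau^{-1}V$, which are open in $X$ as preimages under the continuous maps $\tau$ (no invertibility needed), and applies the category argument to the closed images of these sets in $Y$, obtaining $W=\mathrm{Int}_Y\pi(\tau^{-1}V)\neq\emptyset$. A point of $W$ then comes equipped with a lift $u$ already satisfying $\tau u\in V$, so a pair of such lifts lies in $\tau^{-1}(V\times V)\subseteq T^{-1}\varepsilon$ with the translate $\tau$ applied forwards in $X$ rather than inverted. You should rework your second paragraph along these lines --- run the category argument on $\pi(\tau^{-1}V)$ rather than on $\pi(V)$ --- instead of attempting the pull-back in your third paragraph; the spreading over $\varDelta_Y$ is then handled as in the paper via the $T^{-1}$-saturation $T^{-1}(W\times W)$ and the minimality of $\varDelta_Y$ in $(T,Y\times Y)$.
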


\begin{proof}
Let $V\in\mathfrak{U}(X)$ such that $V\times V\subset\varepsilon$ and take a minimal point $x$ of $X$ with $x\in V$. Since $(T,Y)$ is minimal, hence $\pi(\textrm{cls}_XTx)=Y$. Thus there is some $\tau\in T$ such that $\textrm{Int}_Y\pi(\tau^{-1}V)\not=\emptyset$. Let $\eta=T^{-1}(\textrm{Int}_Y\pi(\tau^{-1}V)\times \textrm{Int}_Y\pi(\tau^{-1}V))$. Then $\eta$ is a non-empty $T^{-1}$-invariant open set which meets $\varDelta_Y$. Since $\varDelta_Y$ is minimal for $(T,Y\times Y)$ and $(Y\times Y)\setminus\eta$ is $T$-invariant, it follows that $\varDelta_Y\subset\eta$ and so $\eta\in\mathscr{U}_Y$. Then by
$(\pi\times\pi)(T^{-1}\varepsilon)\supseteq T^{-1}(\pi(\tau^{-1}V)\times\pi(\tau^{-1}V))\supseteq\eta$,
we can conclude that $(\pi\times\pi)(T^{-1}\varepsilon)\in\mathscr{U}_Y$. The proof is complete.
\end{proof}

The following result is useful, which generalizes \cite[Corollary to Theorem~8.1]{F63} and \cite[Proposition~2.3]{V70} that are in minimal distal flows with compact metric phase spaces by different approaches.

\begin{cor}\label{cor2.6A}
Let $\pi\colon(T,X)\rightarrow(T,Y)$ be an epimorphism of semiflows where $(T,X)$ has a dense set of minimal points and $(T,Y)$ is minimal invertible. Assume $\{(y_n,y_n^\prime)\}$ is a net in $Y\times Y$ such that $\lim_n(y_n,y_n^\prime)\in\varDelta_Y$. Then there are nets $\{(x_i,x_i^\prime)\}$ in $X\times X$ and $\{t_i\}$ in $T$ such that $\lim_it_i(x_i,x_i^\prime)\in\varDelta_X$ and that $\{(\pi(x_i),\pi(x_i^\prime))\}$ is a subnet of $\{(y_n,y_n^\prime)\}$.
\end{cor}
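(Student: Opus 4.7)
The plan is to use Lemma~\ref{lem2.5A} to lift each pair $(y_n,y_n^\prime)$ into $X\times X$, together with a ``contracting'' element $t\in T$, and then to pass to a subnet by compactness of $X\times X$. Write $(y_*,y_*)=\lim_n(y_n,y_n^\prime)$.

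First, for every $\varepsilon\in\mathscr{U}_X$ let $\eta_\varepsilon:=(\pi\times\pi)(T^{-1}\varepsilon)$. By Lemma~\ref{lem2.5A} we have $\eta_\varepsilon\in\mathscr{U}_Y$, so $\eta_\varepsilon$ is a neighborhood of $\varDelta_Y$ in $Y\times Y$; because $(y_n,y_n^\prime)\to(y_*,y_*)\in\varDelta_Y$, there is an index $n(\varepsilon)$ in the original directed set such that $(y_n,y_n^\prime)\in\eta_\varepsilon$ for all $n\ge n(\varepsilon)$. For each pair $(n,\varepsilon)$ with $n\ge n(\varepsilon)$, the definition of $\eta_\varepsilon$ lets us choose $x_{n,\varepsilon},x_{n,\varepsilon}^\prime\in X$ and $t_{n,\varepsilon}\in T$ with
\begin{gather*}
\pi(x_{n,\varepsilon})=y_n,\qquad \pi(x_{n,\varepsilon}^\prime)=y_n^\prime,\qquad \bigl(t_{n,\varepsilon}x_{n,\varepsilon},\,t_{n,\varepsilon}x_{n,\varepsilon}^\prime\bigr)\in\varepsilon.
\end{gather*}

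Second, direct the set of such pairs by declaring $(n_1,\varepsilon_1)\preceq(n_2,\varepsilon_2)$ iff $n_2\ge n_1$ in the original directed set and $\varepsilon_2\subseteq\varepsilon_1$ in $\mathscr{U}_X$; the projection $(n,\varepsilon)\mapsto n$ is monotone and cofinal, so $\{(y_n,y_n^\prime)\}_{(n,\varepsilon)}$ is a subnet of the original net, and consequently $\{(\pi(x_{n,\varepsilon}),\pi(x_{n,\varepsilon}^\prime))\}_{(n,\varepsilon)}$ is also a subnet of $\{(y_n,y_n^\prime)\}$. Now use compactness of $X\times X$ to pass to a further subnet, still indexed by a cofinal subset, on which $\{t_{n,\varepsilon}(x_{n,\varepsilon},x_{n,\varepsilon}^\prime)\}$ converges to some $(u,v)\in X\times X$. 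For every fixed $\varepsilon_0\in\mathscr{U}_X$ and every index $(n,\varepsilon)\succeq(n(\varepsilon_0),\varepsilon_0)$ we have $\varepsilon\subseteq\varepsilon_0$ and so $t_{n,\varepsilon}(x_{n,\varepsilon},x_{n,\varepsilon}^\prime)\in\varepsilon_0$; since $\varepsilon_0$ can be taken closed in $X\times X$, the limit satisfies $(u,v)\in\varepsilon_0$. Intersecting over $\varepsilon_0\in\mathscr{U}_X$ forces $(u,v)\in\varDelta_X$, as required.

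Thus the nets $\{(x_i,x_i^\prime)\}:=\{(x_{n,\varepsilon},x_{n,\varepsilon}^\prime)\}$ and $\{t_i\}:=\{t_{n,\varepsilon}\}$ (along the final subnet) satisfy $\lim_it_i(x_i,x_i^\prime)\in\varDelta_X$ and $\{(\pi(x_i),\pi(x_i^\prime))\}$ is a subnet of $\{(y_n,y_n^\prime)\}$. The main technical obstacle is purely set-theoretic, namely arranging the directed set so that the lifted nets are genuinely subnets of the original net; once Lemma~\ref{lem2.5A} is in hand the rest is a standard diagonal argument using compactness and the fact that $\varDelta_X=\bigcap_{\varepsilon\in\mathscr{U}_X}\varepsilon$.
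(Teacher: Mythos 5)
Your proof is correct and follows essentially the same route as the paper's: both hinge on Lemma~\ref{lem2.5A} to get $(\pi\times\pi)(T^{-1}\varepsilon)\in\mathscr{U}_Y$, then lift pairs $(y_n,y_n')$ lying in that entourage to pairs in $X\times X$ that some $t\in T$ maps into $\varepsilon$, and let $\varepsilon$ shrink. The only difference is that you spell out the directed-set bookkeeping (indexing by pairs $(n,\varepsilon)$) and the final compactness/closed-entourage argument that the paper leaves implicit.
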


\begin{proof}
Given any $\varepsilon\in\mathscr{U}_X$, since $(\pi\times\pi)(T^{-1}\varepsilon)\in\mathscr{U}_Y$ by Lemma~\ref{lem2.5A} and $\lim_n(y_n,y_n^\prime)\in\varDelta_Y$, then we can take some $n_\varepsilon$ with $(y_{n_\varepsilon},y_{n_\varepsilon}^\prime)\in\{(y_n,y_n^\prime)\}$ such that $(y_{n_\varepsilon},y_{n_\varepsilon}^\prime)\in(\pi\times\pi)(T^{-1}\varepsilon)$. Let $t_\varepsilon\in T$ and $x_{n_\varepsilon},x_{n_\varepsilon}^\prime\in X$ such that $t_\varepsilon(x_{n_\varepsilon},x_{n_\varepsilon}^\prime)\in\varepsilon$ and $\pi(x_{n_\varepsilon})=y_{n_\varepsilon}$ and $\pi(x_{n_\varepsilon}^\prime)=y_{n_\varepsilon}^\prime$. This completes the proof of Corollary~\ref{cor2.6A}.
\end{proof}

Corollary~\ref{cor2.6A} is very useful for our later Theorem~\ref{thm2.7A}. Moreover, it should be noted that we are not permitted to use $(\pi\times\pi)(T\varepsilon)\in\mathscr{U}_Y$ instead of $(\pi\times\pi)(T^{-1}\varepsilon)\in\mathscr{U}_Y$, for $T\varepsilon$ is not necessarily open in our semiflow context.

\subsection{Regional proximity relations of extensions and factors}
Now we will be concerned with the relationship of $Q(X)$ with the same relation of its factors. The point of Theorem~\ref{thm2.7A} below is that $(T,X)$ need not be invertible.

\begin{thm}\label{thm2.7A}
Let $\pi\colon(T,X)\rightarrow(T,Y)$ be an epimorphism, where $(T,X)$ is surjective with a dense set of minimal points and $(T,Y)$ is invertible minimal. Then $(\pi\times\pi)Q(X)=Q(Y)$.
\end{thm}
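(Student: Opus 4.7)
I would split the equality into the two inclusions. The inclusion $(\pi\times\pi)Q(X)\subseteq Q(Y)$ is essentially free: any witnessing data $x_n\to x$, $x_n'\to x'$, $t_n x_n,t_n x_n'\to w$ for $(x,x')\in Q(X)$ maps continuously to data showing $(\pi(x),\pi(x'))\in Q(Y)$. No hypothesis beyond continuity of $\pi$ is needed for this direction.

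For the reverse inclusion $Q(Y)\subseteq(\pi\times\pi)Q(X)$, I would first fix $(y,y')\in Q(Y)$ together with witnessing nets $y_n\to y$, $y_n'\to y'$ and $t_n\in T$ such that $t_n y_n\to z$ and $t_n y_n'\to z$ for some $z\in Y$, so that $(t_n y_n,t_n y_n')\to(z,z)\in\varDelta_Y$. The key move is then to apply Corollary~\ref{cor2.6A} directly to this diagonal-convergent net: it produces nets $\{(x_i,x_i')\}$ in $X\times X$ and $\{s_i\}$ in $T$, together with a subnet index $n_i$, such that $s_i(x_i,x_i')\to(v,v)$ for some $v\in X$ and $\pi(x_i)=t_{n_i}y_{n_i}$, $\pi(x_i')=t_{n_i}y_{n_i}'$.

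The next step is to pull the $x_i,x_i'$ back along $t_{n_i}$. Because $(T,X)$ is surjective, I can choose $a_i,a_i'\in X$ with $t_{n_i}a_i=x_i$ and $t_{n_i}a_i'=x_i'$; applying $\pi$ and invoking invertibility of $(T,Y)$ forces $\pi(a_i)=y_{n_i}$ and $\pi(a_i')=y_{n_i}'$. After passing to a compactness subnet so that $a_i\to x$ and $a_i'\to x'$ in $X$, continuity of $\pi$ yields $\pi(x)=y$ and $\pi(x')=y'$. Finally, along the composite net $r_i:=s_i t_{n_i}$ in $T$ I get $r_i a_i=s_i x_i\to v$ and $r_i a_i'=s_i x_i'\to v$, so $(a_i,a_i')$ together with $(r_i)$ witness $(x,x')\in Q(X)$ with $(\pi\times\pi)(x,x')=(y,y')$, as required.

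The hard part will be exactly this lifting step: naive lifts of $y_n,y_n'$ themselves carry no information about their $t_n$-orbits colliding in $X$, so one cannot simply project the definition of $Q$ downstairs and be done. The resolution combines three ingredients at different levels: Corollary~\ref{cor2.6A} produces regionally proximal material in $X$ sitting above $(z,z)$; surjectivity of $(T,X)$ lets me pull that material back to lifts of $y_{n_i},y_{n_i}'$; and invertibility of $(T,Y)$ pins down the $\pi$-projections of those pullbacks. Notably, no invertibility of $(T,X)$ itself enters the argument, which is precisely the subtlety flagged in the remark preceding the theorem.
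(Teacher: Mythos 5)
Your proposal is correct and follows essentially the same route as the paper's own proof: apply Corollary~\ref{cor2.6A} to the diagonal-convergent net $\tau_n(y_n,y_n')$, pull the resulting points back along $\tau_n$ using surjectivity of $(T,X)$, identify their $\pi$-images via invertibility of $(T,Y)$, and pass to a convergent subnet. Your explicit justification that invertibility of $(T,Y)$ pins down $\pi(a_i)=y_{n_i}$ is a point the paper leaves implicit, but the argument is the same.
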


\begin{proof}
Clearly $(\pi\times\pi)Q(X)\subseteq Q(Y)$. For the other direction inclusion, let $(y,y^\prime)\in Q(Y)$. Then there are nets $\{(y_n,y_n^\prime)\}$ in $Y\times Y$ and $\{\tau_n\}$ in $T$ with $(y_n,y_n^\prime)\to(y,y^\prime)$ and $\lim_n\tau_n(y_n,y_n^\prime)\in\varDelta_Y$. By Corollary~\ref{cor2.6A}, choose $(x_n,x_n^\prime)\in X\times X$ and $t_n\in T$ (passing to a subnet if necessary) with
$\lim_nt_n(x_n,x_n^\prime)\in\varDelta_X$ and $\pi(x_n)=\tau_ny_n,\ \pi(x_n^\prime)=\tau_n y_n^\prime$.
Pick $\tilde{x}_n,\tilde{x}_n^\prime\in X$ with $\tau_n(\tilde{x}_n,\tilde{x}_n^\prime)=(x_n,x_n^\prime)$. Then
$\lim t_n\tau_n(\tilde{x}_n,\tilde{x}_n^\prime)\in\varDelta_X$ and $\pi(\tilde{x}_n)=y_n,\ \pi(\tilde{x}_n^\prime)=y_n^\prime$.
By taking a subnet of $\{(\tilde{x}_n,\tilde{x}_n^\prime)\}$ if necessary, let $\tilde{x}_n\to x$ and $\tilde{x}_n^\prime\to x^\prime$. Thus $(x,x^\prime)\in Q(X)$ with $\pi(x)=y$ and $\pi(x^\prime)=y^\prime$. Thus $Q(Y)\subseteq(\pi\times\pi)Q(X)$. This proves Theorem~\ref{thm2.7A}.
\end{proof}

\begin{cor}[{cf.~\cite[Lemma~12]{EG} for $T$ a group}]\label{cor2.8A}
Let $\pi\colon(T,X)\rightarrow(T,Y)$ be an epimorphism such that $(T,X)$ is surjective with dense minimal points and (T,Y) is minimal invertible. Then $(T,Y)$ is equicontinuous if and only if $(\pi\times\pi)Q(X)=\varDelta_Y$.
\end{cor}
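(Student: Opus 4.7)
The plan is to deduce Corollary~\ref{cor2.8A} as an essentially immediate consequence of Theorem~\ref{thm2.7A} together with the characterization of equicontinuity via the regionally proximal relation furnished by Lemma~\ref{lem2.3}. The hypotheses of Corollary~\ref{cor2.8A} are tailored precisely so that both tools apply to the factor $(T,Y)$, and the verification consists of assembling them in the right order.

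First I would invoke Theorem~\ref{thm2.7A}, whose hypotheses match those of the corollary verbatim: $(T,X)$ is surjective with a dense set of minimal points and $(T,Y)$ is minimal invertible, so the theorem yields the identity $(\pi\times\pi)Q(X)=Q(Y)$. This single transfer identity already reduces the corollary to an intrinsic statement about $(T,Y)$, namely: $(T,Y)$ is equicontinuous if and only if $Q(Y)=\varDelta_Y$.

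Next I would apply Lemma~\ref{lem2.3}, which asserts exactly that a surjective semiflow is equicontinuous iff its regionally proximal relation is the diagonal. Since $(T,Y)$ is minimal invertible, it is in particular surjective, so the lemma applies and gives the desired equivalence $(T,Y)\text{ equicontinuous}\iff Q(Y)=\varDelta_Y$. Combining this with the identity $(\pi\times\pi)Q(X)=Q(Y)$ from the previous step immediately yields the two-sided implication claimed in Corollary~\ref{cor2.8A}.

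Strictly speaking, there is no genuine obstacle: the substantive content is entirely contained in Theorem~\ref{thm2.7A} (which in turn relies on Corollary~\ref{cor2.6A} and Lemma~\ref{lem2.5A}) and in Lemma~\ref{lem2.3}. The only point deserving a brief sanity check is that $\varDelta_Y\subseteq Q(Y)$ always holds, so the equality $(\pi\times\pi)Q(X)=\varDelta_Y$ can indeed be read as $Q(Y)=\varDelta_Y$ without ambiguity; this follows at once from the definition of $Q$ as a reflexive relation. With that small observation the proof consists of two lines citing Theorem~\ref{thm2.7A} and Lemma~\ref{lem2.3}.
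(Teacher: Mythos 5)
Your proposal is correct and follows exactly the paper's own argument: apply Theorem~\ref{thm2.7A} to get $(\pi\times\pi)Q(X)=Q(Y)$ and then use Lemma~\ref{lem2.3} to identify equicontinuity of the surjective semiflow $(T,Y)$ with $Q(Y)=\varDelta_Y$. No differences worth noting.
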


\begin{proof}
The ``only if'' follows from Lemma~\ref{lem2.3} and Theorem~\ref{thm2.7A}. Conversely, by Theorem~\ref{thm2.7A}, $Q(Y)=\varDelta_Y$ and so the ``if'' follows from Lemma~\ref{lem2.3}.
\end{proof}

Now applying Theorem~\ref{thm2.7A} with $\pi\colon(T,X)\rightarrow(T,X_\textit{eq})$, we can easily obtain the following result, which is very useful in surjective semiflows and which partially generalizes \cite[Proposition~4.20]{E69}.

\begin{cor}\label{cor2.9A}
Let $(T,X)$ be a surjective minimal semiflow. Then
$S_{\textit{eq}}(X)$ is the smallest closed invariant equivalence relation containing $Q(X)$.
\end{cor}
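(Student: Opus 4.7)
The plan is to establish two inclusions: $S_{\textit{eq}}(X)\supseteq Q(X)$, placing $S_{\textit{eq}}(X)$ itself in the family of closed $T$-invariant equivalences containing $Q(X)$; and, for every closed $T$-invariant equivalence $R\supseteq Q(X)$, $R\supseteq S_{\textit{eq}}(X)$, which is the minimality assertion.

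First I would obtain $Q(X)\subseteq S_{\textit{eq}}(X)$ as a direct application of Theorem~\ref{thm2.7A} to the canonical epimorphism $\pi_{\textit{eq}}\colon(T,X)\to(T,X_{\textit{eq}})$. The hypotheses hold cleanly: $(T,X)$ is minimal surjective, so every point is minimal and the set of minimal points is (trivially) dense; and $(T,X_{\textit{eq}})$ is minimal (as a factor of $(T,X)$) and equicontinuous by construction, hence surjective equicontinuous, so distal by Lemma~\ref{lem2.2}(2), and thus invertible by Lemma~\ref{lem2.2}(1). Theorem~\ref{thm2.7A} then yields $(\pi_{\textit{eq}}\times\pi_{\textit{eq}})Q(X)=Q(X_{\textit{eq}})$, and Lemma~\ref{lem2.3} identifies the right-hand side with $\varDelta_{X_{\textit{eq}}}$. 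Therefore $Q(X)$ lies in the fiber relation of $\pi_{\textit{eq}}$, which is exactly $S_{\textit{eq}}(X)$.

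For the minimality direction, let $R$ be a closed $T$-invariant equivalence with $Q(X)\subseteq R$. By the universal property of $S_{\textit{eq}}(X)$ as the smallest closed $T$-invariant equivalence whose quotient is equicontinuous, it suffices to prove $(T,X/R)$ is equicontinuous. The quotient $(T,X/R)$ is surjective and minimal; once invertibility is also in hand, Corollary~\ref{cor2.8A} applied to $\pi_R\colon X\to X/R$ reduces equicontinuity of $(T,X/R)$ to the identity $(\pi_R\times\pi_R)Q(X)=\varDelta_{X/R}$, which is immediate from $Q(X)\subseteq R$, since then $(\pi_R\times\pi_R)Q(X)\subseteq(\pi_R\times\pi_R)R=\varDelta_{X/R}$.

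The hard part will be establishing the invertibility of $(T,X/R)$. My plan is to show $(T,X/R)$ is distal and then invoke Lemma~\ref{lem2.2}(1). Given a proximal pair $(\bar y,\bar y')$ in $X/R$, I would lift to $y,y'\in X$ and extract a subnet $\{t_n\}$ with $t_ny\to u$ and $t_ny'\to u'$, where $(u,u')\in R$ because $\pi_R(u)=\pi_R(u')=\lim t_n\bar y$. Without loss of generality one may reduce to the case where $R$ is the closed $T$-invariant equivalence generated by $Q(X)$, so that $(u,u')$ is reached from $\varDelta_X$ through a finite chain of $Q(X)$-related intermediates. Combining the defining nets for $Q(X)$ along each link with the approximation $t_n(y,y')\to(u,u')$, and using that each $s\in T$ acts uniformly continuously on the compact space $X$ and that the action is jointly continuous, I would diagonalize to produce a net $r_k=s_{m(k)}t_{n(m(k))}\in T$ with $r_ky,r_ky'\to v$ for a common $v\in X$; this places $(y,y')\in P(X)\subseteq Q(X)\subseteq R$, so $\bar y=\bar y'$ and distality is achieved. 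The delicate point is the iteration through the chain, which requires care because $Q(X)$ is in general neither transitive nor $T$-invariant.
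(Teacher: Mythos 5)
Your first inclusion $Q(X)\subseteq S_{\textit{eq}}(X)$ is correct and matches the paper (the paper only needs the easy containment $(\pi\times\pi)Q(X)\subseteq Q(X_{\textit{eq}})$ plus Lemma~\ref{lem2.3}, but invoking Theorem~\ref{thm2.7A} is harmless). The minimality direction, however, has a genuine gap exactly at the point you flag as delicate: your mechanism for proving that $(T,X/R)$ is distal does not work. First, the reduction to the closed invariant equivalence relation generated by $Q(X)$ is legitimate, but it does not give you what you use: that relation is obtained by (in general transfinitely) iterating the operations of invariant hull, transitive closure and topological closure, so a pair $(u,u')$ in it need not be joined by any finite chain of $Q(X)$-related intermediates. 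Second, and more fundamentally, even a single link fails: from $t_n(y,y')\to(u,u')$ with $(u,u')\in Q(X)$ you cannot conclude that $(y,y')$ is proximal. Regional proximality of $(u,u')$ only supplies nets $x_n\to u$, $x_n'\to u'$ and $s_n\in T$ with $s_nx_n$ and $s_nx_n'$ converging to a common point; it says nothing about $su$ and $su'$ being close for any $s\in T$, so there is nothing to diagonalize against $t_n(y,y')\to(u,u')$. The diagonal argument you describe is valid when $(u,u')$ is \emph{proximal}, but $Q(X)\not\subseteq P(X)$ in general (e.g. in the weak-mixing case $Q(X)=X\times X$ while $P(X)$ is typically proper), so the argument would wrongly prove that every minimal surjective semiflow with $Q(X)=X\times X$ is proximal.

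The paper closes this gap with an Ellis enveloping semigroup argument that you would need to adopt (or replace by something equivalent). Let $\pi\colon X\to X/R$ and let $\pi_*\colon E(X)\to E(X/R)$ be the induced homomorphism of enveloping semigroups. Given an idempotent $u\in E(X/R)$, the preimage $\pi_*^{-1}(u)$ is a closed subsemigroup of $E(X)$ and hence contains an idempotent $v$; then $(x,v(x))\in P(X)\subseteq Q(X)\subseteq R$ gives $u\pi(x)=\pi(v(x))=\pi(x)$ for all $x$, so $u=\mathit{id}_{X/R}$. Since every idempotent of $E(X/R)$ is the identity, $(T,X/R)$ is distal, hence invertible by Lemma~\ref{lem2.2}, and only then does Theorem~\ref{thm2.7A} (or Corollary~\ref{cor2.8A}) apply to give $Q(X/R)=(\pi\times\pi)Q(X)=\varDelta_{X/R}$ and equicontinuity via Lemma~\ref{lem2.3}. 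Note that the crucial containment being exploited is $P(X)\subseteq R$, not a chain of $Q$-links; this is what your net-chasing plan is implicitly trying to reconstruct and cannot.
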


\begin{proof}
Since $(T,X_\textit{eq})$ is equicontinuous surjective, Lemma~\ref{lem2.2} follows that $(T,X_\textit{eq})$ is minimal invertible, where $X_\textit{eq}=X/S_\textit{eq}$. Then by Lemma~\ref{lem2.3}, $(\pi\times\pi)Q(X)\subseteq Q(X_\textit{eq})=\varDelta_{X_\textit{eq}}$ so that $Q(X)\subseteq S_{\textit{eq}}$.
To show $S_\textit{eq}(X)$ is the smallest one, let $R$ be a closed invariant equivalence relation on $X$ with $Q(X)\subseteq R$, $X\xrightarrow{\pi}X/R$ the canonical map and $E(X)\xrightarrow{\pi_*} E(X/R)$ the standard semigroup homomorphism between the Ellis enveloping semigroups induced by $\pi$. If $u^2=u\in E(X/R)$ then $\pi_*^{-1}(u)$ is a closed subsemigroup of $E(X)$. Hence there exists $v^2=v\in E(X)$ with $\pi_*(v)=u$. Since
$(x,v(x))\in P(X)\subseteq Q(X)\subseteq R$,
then $\pi(x)=\pi(v(x))=u\pi(x)$ for all $x\in X$. Thus $u=\textit{id}_{X/R}$ the identity of $X/R$. This implies that $(T,X/R)$ is distal and so invertible (cf.~Lemma~\ref{lem2.2}). Then by Theorem~\ref{thm2.7A} and $Q(X)\subseteq R$, $\varDelta_{X/R}=(\pi\times \pi)Q(X)=Q(X/R)$. Thus $(T,X/R)$ is equicontinuous by Lemma~\ref{lem2.3}. Whence $S_{\textit{eq}}\subseteq R$. This proves Corollary~\ref{cor2.9A}.
\end{proof}

We note here that when $T$ is a group, the proof of Corollary~\ref{cor2.9A} may be simplified much as follows:

\begin{proof}[\textbf{Another proof of Corollary~\ref{cor2.9A} for $T$ a group}]
Since $\pi\colon X\rightarrow X_\textit{eq}$ is an epimorphism of minimal flows, by $(\pi\times\pi)Q(X)\subseteq Q(X_\textit{eq})=\varDelta$ it follows that $Q(X)\subseteq S_\textit{eq}$. On the other hand, if $R$ is an invariant closed equivalence relation on $X$ with $Q(X)\subseteq R$, then by Theorem~\ref{thm2.7A} we can conclude that $Q(X/R)=\varDelta$ so $(T,X/R)$ is equicontinuous. Thus $S_\textit{eq}\subseteq R$. The proof is complete.
\end{proof}

Recall that Lemma~\ref{lem2.3} is proved in \cite[pp.~46-47]{DX} by using Lemma~\ref{lem2.2}, a.a. points and Veech's relation $V(X)$ in (1) of Definition~\ref{def1.13}. In fact, we can simply reprove it by only using Lemma~\ref{lem2.2} and Corollary~\ref{cor2.9A}.

\begin{proof}[\textbf{Another proof of Lemma~\ref{lem2.3}}]
Let $(T,X)$ be equicontinuous surjective; then $\varDelta_X=P(X)=Q(X)$ by Lemma~\ref{lem2.2}. Conversely, assume $Q(X)=\varDelta_X$; then $(T,X)$ is invertible and pointwise minimal. By Corollary~\ref{cor2.9A}, it follows that $S_\textit{eq}=\varDelta_X$ so $(T,X)$ is equicontinuous surjective.
\end{proof}

\begin{thm}[{cf.~\cite[Theorem~9.12]{Fur} for $T=\mathbb{Z}$}]\label{thm2.10}
Let $(T,X)$ be a minimal weak-mixing flow with $X$ non-trivial. Then no point of $X$ is distal.
\end{thm}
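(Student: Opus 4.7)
My plan is to argue by contradiction using Veech's Structure Theorem for point-distal flows, which converts the existence of a distal point into the existence of a non-trivial equicontinuous factor, and then to derive a contradiction from weak-mixing. Suppose, toward a contradiction, that some $x_0\in X$ is a distal point of $(T,X)$. Since $(T,X)$ is minimal, $\mathrm{cls}_X T x_0 = X$, so by Definition~\ref{def1.3} the flow $(T,X)$ is point-distal.

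The first substantial step is to invoke Veech's Structure Theorem for point-distal minimal flows (cf.~Veech~\cite{V70, V77}): since $(T,X)$ is a non-trivial point-distal minimal flow, it admits a non-trivial equicontinuous factor; that is, there exists an epimorphism $\pi\colon(T,X)\to(T,Y)$ with $(T,Y)$ an equicontinuous minimal flow and $Y$ non-trivial. (Equivalently, one wants to know that the maximal equicontinuous factor $(T,X_\textit{eq})$ is non-trivial for a non-trivial point-distal minimal flow.)

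The remainder is the weak-mixing obstruction to a non-trivial equicontinuous factor. Since $(T,X)$ is weak-mixing, $(T,X\times X)$ is T.T., and as T.T. passes to factors under $\pi\times\pi$, $(T,Y\times Y)$ is T.T. On the other hand, Lemma~\ref{lem2.3} gives $Q(Y)=\varDelta_Y$. Choose $y_1\neq y_2$ in $Y$; then by the formula $Q(Y)=\bigcap_{\alpha\in\mathscr{U}_Y}\mathrm{cls}_{Y\times Y}\bigcup_{t\in T}t^{-1}\alpha$ there is some open symmetric $\alpha\in\mathscr{U}_Y$ with $(y_1,y_2)\notin\mathrm{cls}_{Y\times Y}\bigcup_{t\in T}t^{-1}\alpha$. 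Since each $t$ acts as a homeomorphism of $Y$ (as $T$ is a group), the set $U:=\bigcup_{t\in T}t^{-1}\alpha$ is open; it is $T$-invariant (using the group structure) and non-empty (since $\varDelta_Y\subseteq t^{-1}\alpha$ for every $t$). Its closure avoids $(y_1,y_2)$, so $U$ is a non-empty $T$-invariant open subset of $Y\times Y$ that is not dense, contradicting T.T.\ of $(T,Y\times Y)$. Hence $x_0$ cannot be distal.

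\textbf{Main obstacle.} The crux is the correct invocation of Veech's Structure Theorem for point-distal minimal flows at the level of generality of the theorem (general group $T$, possibly non-metrizable compact $\mathrm{T}_2$ phase space $X$). The classical form in Veech~\cite{V70} is for abelian $T$ acting on a compact metric space; producing a non-trivial equicontinuous factor in the present setting may require a more general form (from \cite{V77}) or an adaptation of the classical argument. By contrast, the rest of the plan---pushing T.T.\ through $\pi\times\pi$, and the equicontinuity obstruction via $Q(Y)=\varDelta_Y$---is essentially formal once this structural input is in hand.
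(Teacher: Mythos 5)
Your proof is correct and takes essentially the same route as the paper: both argue that a distal point would make the non-trivial minimal flow point-distal, invoke Veech's Structure Theorem to produce a non-trivial equicontinuous factor (the paper handles the obstacle you flag by citing McMahon--Nachman for non-metrizable $X$), and contradict weak-mixing. The only cosmetic difference is in the second half: the paper deduces $S_{\textit{eq}}(X)=X\times X$ from $Q(X)=X\times X$ via Corollary~\ref{cor2.9A}, whereas you push topological transitivity of $(T,X\times X)$ down to the factor and contradict $Q(Y)=\varDelta_Y$ directly; both are valid.
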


\begin{proof}
Since $(T,X)$ is weak-mixing, $Q(X)=X\times X$ and so $S_\textit{eq}=X\times X$ by Corollary~\ref{cor2.9A}. This shows that $(T,X)$ has no non-trivial equicontinuous factor. However, if $(T,X)$ had a distal point it were point-distal. So by Veech's Structure Theorem (cf.~\cite[Theorem~6.1]{V70} for $X$ metrizable and \cite[Corollary~2.2 and Lemma~2.3]{MN} for general $X$), $(T,X)$ would have a non-trivial equicontinuous factor. Thus $(T,X)$ has no distal point. The proof is complete.
\end{proof}
In addition, when $(T,X)$ is an invertible semiflow, Corollary~\ref{cor1.11} can be proved by using Corollary~\ref{cor2.9A} and Veech's Structure Theorem (\cite[Corollary~4.6]{AD}) as follows:

\begin{proof}[\textbf{Another proof of Corollary~\ref{cor1.11} when $(T,X)$ invertible}]
First $S_\textit{eq}=X\times X$ by Corollary~\ref{cor2.9A} and so $(T,X)$ has no non-trivial equicontinuous factor. However, if $(T,X)$ has a distal point with $T$ amenable, then by \cite[Corollary~4.6]{AD} it follows that $(T,X)$ has a non-trivial equicontinuous factor. Thus $(T,X)$ has no distal point and the proof is complete.
\end{proof}
\subsection{Quasi-regular invariant measure}
Since $X$ is not necessarily metrizable, hence a Borel probability measure does not need be regular. However, next we will show that $(T,X)$ admits an invariant Borel probability measure if and only if it admits an invariant quasi-regular (regular) Borel probability measure. Let $C_c(X)$ be the space of continuous real-valued functions with compact support.
\begin{quote}
\textbf{Riesz-Markov theorem}.~ {\it Let $X$ be a locally compact $\textrm{T}_2$ space and $\I$ a positive linear functional on $C_c(X)$. Then there is a Borel measure $\mu$ on $X$ such that
$$
\I(f)=\int_Xfd\mu\quad \forall f\in C_c(X).
$$
The measure $\mu$ may be taken to be quasi-regular. In this case, it is then unique.}
\end{quote}

\begin{lem}\label{lem2.11}
$(T,X)$ admits an invariant measure if and only if it admits an invariant quasi-regular Borel probability measure.
\end{lem}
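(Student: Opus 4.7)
The ``if'' direction is immediate, since a quasi-regular invariant Borel probability measure is in particular an invariant Borel probability measure. So assume $\mu$ is an invariant Borel probability measure on $X$, not assumed regular. The plan is to use $\mu$ to build a positive linear functional on $C(X)$, replace it by the (unique) quasi-regular Borel representative produced by the Riesz--Markov theorem, and then deduce invariance of the replacement from the uniqueness clause of Riesz--Markov.

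Since $X$ is compact $\mathrm{T}_2$, $C_c(X)=C(X)$. Define $\I\colon C(X)\to\mathbb{R}$ by $\I(f)=\int_X f\,d\mu$. This is a positive linear functional with $\I(\mathbf{1})=1$, so Riesz--Markov supplies a unique quasi-regular Borel probability measure $\tilde\mu$ with $\int f\,d\tilde\mu=\I(f)$ for every $f\in C(X)$. I then have to show $\tilde\mu$ is $T$-invariant. Fix $t\in T$ and let $\nu_t(B)=\tilde\mu(t^{-1}B)$ for Borel $B$; joint continuity of the action makes $y\mapsto ty$ continuous, so $\nu_t$ is a well-defined Borel probability measure. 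For every $f\in C(X)$, the function $f\circ t$ lies in $C(X)$, and invariance of $\mu$ (extended from indicators to bounded Borel functions in the usual way) gives
$$
\int_X f\,d\nu_t=\int_X f\circ t\,d\tilde\mu=\int_X f\circ t\,d\mu=\int_X f\,d\mu=\int_X f\,d\tilde\mu.
$$
Thus $\nu_t$ and $\tilde\mu$ represent the same positive linear functional on $C(X)$, so once $\nu_t$ is shown to be quasi-regular the uniqueness statement in Riesz--Markov forces $\nu_t=\tilde\mu$, which is exactly $T$-invariance of $\tilde\mu$.

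The main technical point, and essentially the only obstacle, is the quasi-regularity of $\nu_t$. Inner regularity on an open set $U\subseteq X$ is straightforward: $t^{-1}U$ is open, so quasi-regularity of $\tilde\mu$ produces compact $K'\subseteq t^{-1}U$ with $\tilde\mu(K')$ arbitrarily close to $\tilde\mu(t^{-1}U)=\nu_t(U)$; since $t(K')$ is compact, contained in $U$, and satisfies $t^{-1}t(K')\supseteq K'$, one gets $\nu_t(t(K'))\ge\tilde\mu(K')$, which is enough. Outer regularity on arbitrary Borel sets is less direct, and I would handle it by the standard compact Hausdorff argument: on a compact $\mathrm{T}_2$ space closed sets are compact, so, for any finite Borel measure, outer regularity on a Borel set $B$ is equivalent by complementation to inner regularity (via compact subsets) on $B^c$. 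Applying this equivalence to $\tilde\mu$ promotes its quasi-regularity to inner regularity on every Borel set; transporting through $t^{-1}$ as in the open case gives inner regularity of $\nu_t$ on every Borel set; and complementing one more time in the finite measure $\nu_t$ yields outer regularity of $\nu_t$ on every Borel set. Hence $\nu_t$ is quasi-regular, Riesz--Markov uniqueness forces $\nu_t=\tilde\mu$ for every $t\in T$, and $\tilde\mu$ is the desired invariant quasi-regular Borel probability measure.
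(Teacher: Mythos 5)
Your proof is correct and follows the same route as the paper: both pass from $\mu$ to the positive linear functional $\I(f)=\int_X f\,d\mu$ on $C(X)$, invoke the Riesz--Markov theorem to obtain the unique quasi-regular representative $\tilde\mu$, and then deduce invariance of $\tilde\mu$ from invariance of $\I$ via the uniqueness clause. The one place you go beyond the paper is in explicitly verifying that $B\mapsto\tilde\mu(t^{-1}B)$ is itself quasi-regular (so that uniqueness actually applies); the paper compresses this into the single sentence ``Since $\I$ is $T$-invariant, so $\mu$ is also invariant,'' and your complementation argument on the compact Hausdorff space is a correct way to supply that missing detail.
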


\begin{proof}
The ``if'' part is trivial. So we now assume $(T,X)$ admits an invariant Borel probability measure $\nu$. Then by the Riesz-Markov theorem, we can obtain a positive linear functional
$$
\I(f)=\int_Xfd\nu\quad \forall f\in C(X).
$$
Since $\nu$ is $T$-invariant, $\I$ is also $T$-invariant in the sense that $\I(f)=\I(ft)$ for all $f\in C(X)$ and $t\in T$. Further by the Riesz-Markov theorem again, we can find a unique quasi-regular Borel probability measure $\mu$ such that
$$
\I(f)=\int_Xfd\mu\quad \forall f\in C(X).
$$
Since $\I$ is $T$-invariant, so $\mu$ is also invariant. Therefore the ``only if'' part holds.
\end{proof}

In view of Lemma~\ref{lem2.11}, we will identify an invariant Borel probability measure with an invariant quasi-regular Borel probability measure in our later arguments if no confusion arises.

\subsection{Furstenberg's structure theorem}\label{sec2.5}
Let $T$ be any discrete semigroup with identity $e$ and let $\theta\ge1$ be some ordinal. Following~\cite{F63},
a \textit{projective system} of minimal semiflows with phase semigroup $T$ is a collection of minimal semiflows $(T,X_\lambda)$ on compact $\textrm{T}_2$ spaces $X_\lambda$ indexed by ordinal numbers $\lambda\le\theta$, and a family of epimorphisms, $\pi_{\lambda,\nu}\colon(T,X_\lambda)\rightarrow(T,X_\nu)$, for $0\le\nu<\lambda\le\theta$, satisfying:
\begin{enumerate}
\item[(1)] If $0\le\nu<\lambda<\eta\le\theta$, then $\pi_{\eta,\nu}=\pi_{\lambda,\nu}\circ\pi_{\eta,\lambda}$.
\item[(2)] If $\mu\le\theta$ is a limit ordinal, then $X_\mu$ is the minimal subset of the Cartesian product semiflows $\left(T,\times_{\lambda<\mu}X_\lambda\right)$ consisting of all $x=(x_\lambda)_{\lambda<\mu}\in\times_{\lambda<\mu}X_\lambda$ with $x_\nu=\pi_{\lambda,\nu}(x_\lambda)$ for all $\nu<\lambda<\mu$ and then for $\lambda<\mu$,
    $\pi_{\mu,\lambda}\colon X_\mu\rightarrow X_\lambda$ is just the projection map. In this case, we say that $(T,X_\mu)$ is the \textit{projective limit} of the family of minimal semiflows $\{(T,X_\lambda)\,|\,\lambda<\mu\}$.
\end{enumerate}

Let $\pi\colon(T,X)\rightarrow(T,Y)$ be an epimorphism of two semiflows.
Then $\pi$ is called \textit{relatively equicontinuous} if given $\varepsilon\in\mathscr{U}_X$ there is a $\delta\in\mathscr{U}_X$ such that whenever $(x,x^\prime)\in\delta$ with $\pi(x)=\pi(x^\prime)$, then $(tx,tx^\prime)\in\varepsilon$ for all $t\in T$ (cf.~\cite{F63} and \cite[p.~95]{Aus}). In this case, $(T,X)$ is also call a \textit{relatively equicontinuous extension} of $(T,Y)$.

Now based on these definitions, we are ready to state the Furstenberg structure theorem for minimal distal semiflows as follows:

\begin{quote}
\textbf{Furstenberg's Structure Theorem} (cf.~\cite[Theorem~5.14]{AD}).~ {\it Let $\pi\colon (T,X)\rightarrow(T,Y)$ be an epimorphism between distal minimal semiflows. Then there is a projective system of minimal semiflows $\{(T,X_\lambda)\,|\,\lambda\le\theta\}$, for some ordinal $\theta\ge1$, with $X_\theta=X$, $X_0=Y$ such that if $0\le\lambda<\theta$, then $(T,X_{\lambda+1})\xrightarrow{\pi_{\lambda+1,\lambda}}(T,X_\lambda)$ is a relatively equicontinuous extension.}
\end{quote}

In fact, we will only need the special case that $(T,Y)$ is the semiflow with $Y$ a singleton space. See Lemma~\ref{lem4.1} below.
\section{McMahon pseudo-metric and $S_\textit{eq}=Q$}\label{sec3}
This section will be mainly devoted to proving Theorem~\ref{thm1.4} and considering another proximity relation of Veech.
The McMahon pseudo-metric $D_J$ and the induced equivalence relation $R_J$ based on an invariant closed subset $J$ of $X\times X$ are useful techniques for our aim here.

\subsection{McMahon pseudo-metric}\label{sec3.1}
In this subsection, let $(T,X)$ be a minimal semiflow; and suppose $(T,Y)$ is any semiflow, which admits an invariant (quasi-regular Borel probability) measure $\mu$.
If $J$ is a closed invariant subset of $X\times Y$ and $x\in X$, then the \textit{section} $J_x$ is defined by
$$J_x=\{y\in Y\,|\,(x,y)\in J\}.$$
Such $J$ is called a \textit{joining} of $(T,X)$ and $(T,Y)$ in some works if we additionally require that $\pi_Y(J)=Y$ where $\pi_Y\colon(x,y)\mapsto y$.

\begin{lem}[{cf.~\cite[Lemma~9.4]{Aus} for $T$ a group}]\label{lem3.1}
Let $J$ be a closed invariant subset of $X\times Y$. If $x,x^\prime\in X$, then $\mu(J_x)=\mu(J_{x^\prime})$.
\end{lem}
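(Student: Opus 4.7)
\medskip

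\noindent\textbf{Proof proposal for Lemma~\ref{lem3.1}.}

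The plan is to show that the function $f\colon X\to[0,1]$ defined by $f(x)=\mu(J_x)$ is both $T$-monotone along orbits and upper semicontinuous, and then to conclude equality on $X$ using minimality of $(T,X)$.

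First I would record the invariance inequality. Since $J$ is $T$-invariant in $X\times Y$, for every $t\in T$ and every $x\in X$ we have $tJ_x\subseteq J_{tx}$, which rephrases as $J_x\subseteq t^{-1}J_{tx}$. By Lemma~\ref{lem2.11} we may assume $\mu$ is quasi-regular, and since $J$ is closed, $J_x=J\cap(\{x\}\times Y)$ is compact; hence both sides are Borel. Then $T$-invariance of $\mu$ gives
\begin{equation*}
\mu(J_x)\le\mu(t^{-1}J_{tx})=\mu(J_{tx})\qquad\forall t\in T,\ x\in X.
\end{equation*}

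Next I would establish upper semicontinuity of $f$. The key is that $x\mapsto J_x$ is upper semicontinuous as a set-valued map: if $U$ is an open neighborhood of $J_x$ in $Y$, then $J\cap(\{x\}\times(Y\setminus U))=\emptyset$, and since $Y\setminus U$ is compact and $J$ is closed in $X\times Y$, a standard tube-lemma argument produces a neighborhood $W$ of $x$ in $X$ with $J\cap(W\times(Y\setminus U))=\emptyset$, i.e., $J_{x'}\subseteq U$ for all $x'\in W$. Combined with outer regularity of $\mu$ (take $U\supseteq J_x$ with $\mu(U)<\mu(J_x)+\varepsilon$), this yields
\begin{equation*}
\limsup_{x'\to x}\mu(J_{x'})\le\mu(J_x).
\end{equation*}

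Finally I would combine the two ingredients using minimality. Fix $x,x'\in X$. Since $(T,X)$ is minimal, there is a net $\{t_n\}\subseteq T$ with $t_n x\to x'$. The invariance inequality gives $\mu(J_x)\le\mu(J_{t_nx})$ for every $n$, while upper semicontinuity gives $\limsup_n\mu(J_{t_nx})\le\mu(J_{x'})$; therefore $\mu(J_x)\le\mu(J_{x'})$. Swapping the roles of $x$ and $x'$ and using a net in $Tx'$ converging to $x$ (again by minimality) yields the reverse inequality, hence equality.

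The only subtle point is the upper semicontinuity step, which genuinely requires $\mu$ to be (quasi-)regular; the non-metrizability of $X$ and $Y$ makes this nontrivial a priori, but it is precisely handled by Lemma~\ref{lem2.11}. A secondary caveat is that because $T$ is only a semigroup, one must not try to invert elements: the argument must proceed by nets $t_nx\to x'$ on both sides, as is done above.
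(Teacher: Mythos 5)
Your proposal is correct and follows essentially the same route as the paper: the paper's proof likewise takes an open $V\supseteq J_x$ with $\mu(V)<\mu(J_x)+\varepsilon$ by regularity, uses minimality to get a net $t_nx'\to x$, observes $J_{t_nx'}\subseteq V$ eventually (the upper semicontinuity you justify via the tube lemma) and $\mu(J_{x'})\le\mu(J_{t_nx'})$ (your invariance inequality), then concludes by symmetry. Your write-up merely makes explicit the two ingredients the paper leaves as "it is easy to see."
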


\begin{proof}
Let $\varepsilon>0$ and let $V$ be an open set in $Y$ with $J_x\subset V$ and $\mu(V)<\mu(J_x)+\varepsilon$. Let $\{t_n\}$ be a net in $T$ for which $t_nx^\prime\to x$. It is easy to see that $J_{t_nx^\prime}\subset V$ for $n\ge n_0$, for some $n_0$. Hence $\mu(J_{x^\prime})\le\mu(J_{t_nx^\prime})\le\mu(V)\le\mu(J_x)+\varepsilon$. Letting $\varepsilon\to 0$, we have $\mu(J_{x^\prime})\le\mu(J_x)$. By symmetry, $\mu(J_x)\le\mu(J_{x^\prime})$ and the lemma is thus proved.
\end{proof}

\begin{defn}\label{def3.2}
If $J$ is a closed invariant subset of $X\times Y$, we define the \textit{McMahon pseudo-metric} $D_J$ on $X$ by
$$
D_J(x,x^\prime)=\mu(J_x\vartriangle J_{x^\prime})\quad \forall x,x^\prime\in X.
$$
It is easy to check that $D_J$ is a pseudo-metric on $X$. Moreover, $D_J(x,x^\prime)=0$ if $J_x\subseteq J_{x^\prime}$ or if $J_x\supseteq J_{x^\prime}$ by Lemma~\ref{lem3.1}.
\end{defn}

\begin{lem}[{cf.~\cite[Lemma~9.5]{Aus} for $T$ a group}]\label{lem3.3}
The pseudo-metric $D_J$ on $X$ is continuous and satisfies $D_J(x,x^\prime)=D_J(tx, tx^\prime)$ for all $t\in T$ and any $x,x^\prime\in X$.
\end{lem}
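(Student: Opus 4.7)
The two assertions in the lemma---the $T$-invariance $D_J(tx,tx')=D_J(x,x')$ and the continuity of $D_J$---will be proved separately, with Lemma~\ref{lem3.1} playing the decisive role in both.

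\emph{$T$-invariance.} For $t\in T$, invariance of $J\subseteq X\times Y$ (i.e.\ $tJ\subseteq J$) unwinds to the forward inclusion $J_x\subseteq t^{-1}(J_{tx})$ for every $x\in X$. I would upgrade this one-sided inclusion to an equality modulo $\mu$-null sets by comparing measures: Lemma~\ref{lem3.1} gives $\mu(J_x)=\mu(J_{tx})$, and the $T$-invariance of $\mu$ on $Y$ gives $\mu(t^{-1}(J_{tx}))=\mu(J_{tx})$. Since both $J_x$ and $t^{-1}(J_{tx})$ are Borel (in fact closed) and carry the same $\mu$-measure with one contained in the other, $\mu(t^{-1}(J_{tx})\setminus J_x)=0$. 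Applying this identity to both $x$ and $x'$, and using that preimage commutes with symmetric difference, I would conclude
\begin{equation*}
\mu(J_{tx}\vartriangle J_{tx'})=\mu\bigl(t^{-1}(J_{tx}\vartriangle J_{tx'})\bigr)=\mu\bigl(t^{-1}(J_{tx})\vartriangle t^{-1}(J_{tx'})\bigr)=\mu(J_x\vartriangle J_{x'}).
\end{equation*}

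\emph{Continuity.} By the pseudo-metric triangle inequality, $|D_J(x_n,x_n')-D_J(x,x')|\le D_J(x_n,x)+D_J(x_n',x')$, so it suffices to prove $D_J(x_n,x)\to 0$ whenever $x_n\to x$ in $X$. Because $J$ is closed in the compact space $X\times Y$, the section map $x\mapsto J_x$ is upper semicontinuous: for every open $V\supseteq J_x$, one has $J_{x'}\subseteq V$ for all $x'$ near enough to $x$ (otherwise pick $y_n\in J_{x_n}\setminus V$ with $y_n\to y\notin V$, contradicting closedness of $J$). Given $\varepsilon>0$, quasi-regularity of $\mu$ yields an open $V\supseteq J_x$ with $\mu(V\setminus J_x)<\varepsilon$, and then $\mu(J_{x_n}\setminus J_x)<\varepsilon$ for $n$ large. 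Lemma~\ref{lem3.1} converts this one-sided bound into two-sided control:
\begin{equation*}
\mu(J_x\setminus J_{x_n})=\mu(J_x)-\mu(J_x\cap J_{x_n})=\mu(J_{x_n})-\mu(J_x\cap J_{x_n})=\mu(J_{x_n}\setminus J_x)<\varepsilon,
\end{equation*}
so $D_J(x_n,x)<2\varepsilon$ eventually, as required.

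\emph{Main obstacle.} The delicate point is the $T$-invariance step. Because $T$ is only a semigroup, $tJ\subseteq J$ gives merely the inclusion $J_x\subseteq t^{-1}(J_{tx})$ and not equality; promoting this to a mod-null equality is precisely where the invariant-measure hypothesis (through $T$-invariance of $\mu$ on $Y$) and the minimality of $(T,X)$ (through Lemma~\ref{lem3.1}, which forces the constancy of $\mu(J_x)$ in $x$) have to combine. Without either of these two ingredients the argument would collapse in the non-invertible semiflow setting that is the whole point of this paper.
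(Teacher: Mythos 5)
Your proof is correct and follows essentially the same route as the paper's: both derive the $T$-invariance from the one-sided section inclusion together with Lemma~\ref{lem3.1} and the invariance of $\mu$ (you phrase it via preimages $t^{-1}(J_{tx})$, where the paper uses forward images $tJ_x$ and then pulls back --- a purely cosmetic difference), and both prove continuity via upper semicontinuity of $x\mapsto J_x$, quasi-regularity of $\mu$, and Lemma~\ref{lem3.1}. There is nothing to add.
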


\begin{proof}
Let $x,x^\prime\in X$ and $t\in T$.
First note that $tJ_x\subseteq J_{tx}$ and $\mu(tJ_x)=\mu(J_{tx})$ by Lemma~\ref{lem3.1}. So $tJ_x=J_{tx}\pmod0$; that is, $\mu(tJ_x\vartriangle J_{tx})=0$. Similarly, $tJ_{x^\prime}\subseteq J_{tx^\prime}$ and so $tJ_{x^\prime}=J_{tx^\prime}\pmod0$. Thus
$$t^{-1}(tJ_x)=J_x\pmod0\quad \textrm{and}\quad t^{-1}(tJ_{x^\prime})=J_{x^\prime}\pmod0.$$
This implies that $J_x\cap J_{x^\prime}= t^{-1}(tJ_x\cap tJ_{x^\prime})=t^{-1}(J_{tx}\cap J_{tx^\prime})\pmod0$. Thus by Lemma~\ref{lem3.1}, it follows that $D_J(x,x^\prime)=\mu(J_x)+\mu(J_{x^\prime})-\mu(J_x\cap J_{x^\prime})=\mu(J_{tx})+\mu(J_{tx^\prime})-\mu(J_{tx}\cap J_{tx^\prime})=D_J(tx,tx^\prime)$.

To show that $D_J$ is continuous, we first note that if $U$ is an open set in $Y$, with $J_x\subset U$, and $\mu(U-J_x)<\epsilon$, then $\mu(U-J_{x^\prime})<\epsilon$, for $x^\prime$ sufficiently close to $x$. Now, let $\{x_n\}$ be a net in $X$ with $x_n\to x$. Let $\epsilon>0$, $U$ open in $Y$ with $J_x\subset U$ and $\mu(U-J_x)<\epsilon$. Then if $n\ge n_0$, $J_{x_n}\subset U$ and then $\mu(J_x-J_{x_n})<\epsilon$, so $D_J(x,x_n)<2\epsilon$. Thus, if $x_n\to x$, then $D_J(x,x_n)\to 0$. It follows immediately that $D_J$ is continuous. In fact, let $(x_n,x_n^\prime)\to (x,x^\prime)$, then
$$
\lim_n|D_J(x,x^\prime)-D_J(x_n,x_n^\prime)|\le\lim_n|D_J(x,x_n)+D_J(x_n,x_n^\prime)+D_J(x_n^\prime,x^\prime)-D_J(x_n,x_n^\prime)|=0.
$$
The proof of Lemma~\ref{lem3.3} is therefore completed.
\end{proof}

\begin{sn}
Let $R_J$ be the equivalence relation on $X$ defined by the pseudo-metric $D_J$ as follows: $(x,x^\prime)\in R_J$ if and only if $D_J(x,x^\prime)=0$.
\end{sn}

\begin{lem}[{cf.~\cite[Lemma~9.6]{Aus} for $T$ a group}]\label{lem3.5}
$R_J$ is a closed $T$-invariant equivalence relation on $X$ which contains $Q(X)$.
\end{lem}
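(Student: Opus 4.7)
The plan is to verify the four properties in turn, most of which follow mechanically from Lemma~\ref{lem3.3}; the only step with any content is the containment $Q(X)\subseteq R_J$.

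First, since $D_J$ is a pseudo-metric on $X$, the relation $(x,x')\in R_J\iff D_J(x,x')=0$ is automatically reflexive, symmetric, and transitive (the triangle inequality gives transitivity), so $R_J$ is an equivalence relation. Next, $R_J=D_J^{-1}(0)$ is closed because $D_J\colon X\times X\to\mathbb{R}$ is continuous by Lemma~\ref{lem3.3}. For $T$-invariance, if $(x,x')\in R_J$ and $t\in T$, then by the invariance identity $D_J(tx,tx')=D_J(x,x')=0$, so $(tx,tx')\in R_J$.

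The main step is to prove $Q(X)\subseteq R_J$. Take $(x,x')\in Q(X)$; by the definition of the regionally proximal relation, there are nets $\{x_n\}, \{x_n'\}$ in $X$ and $\{t_n\}$ in $T$ with $x_n\to x$, $x_n'\to x'$, and $t_nx_n\to z$, $t_nx_n'\to z$ for a common $z\in X$. By the continuity of $D_J$ we have $D_J(t_nx_n,t_nx_n')\to D_J(z,z)=0$. On the other hand, the $T$-invariance part of Lemma~\ref{lem3.3} gives $D_J(t_nx_n,t_nx_n')=D_J(x_n,x_n')$ for every $n$, and continuity of $D_J$ also yields $D_J(x_n,x_n')\to D_J(x,x')$. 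Combining the two limits forces $D_J(x,x')=0$, i.e., $(x,x')\in R_J$.

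I do not expect any real obstacle: every ingredient (continuity, $T$-invariance, and the pseudo-metric property of $D_J$) has already been supplied by Lemma~\ref{lem3.3}, and the rest is a direct unwinding of definitions. The one spot to be careful with is to use \emph{both} halves of Lemma~\ref{lem3.3} in the $Q(X)\subseteq R_J$ step: continuity to pass to the limit, and $T$-invariance to replace $D_J(t_nx_n,t_nx_n')$ by $D_J(x_n,x_n')$ before taking the limit.
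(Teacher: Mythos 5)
Your proposal is correct and follows essentially the same route as the paper: the equivalence-relation, closedness, and invariance properties are read off from Lemma~\ref{lem3.3}, and the containment $Q(X)\subseteq R_J$ is obtained by taking nets $x_n\to x$, $x_n'\to x'$, $t_n(x_n,x_n')\to(z,z)$ and combining $D_J(x_n,x_n')\to D_J(x,x')$ with $D_J(x_n,x_n')=D_J(t_nx_n,t_nx_n')\to D_J(z,z)=0$. No gaps.
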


\begin{proof}
Lemma~\ref{lem3.3} implies that $R_J$ is a closed invariant equivalence relation. Now if $(x,x^\prime)$ belongs to $Q(X)$, let $x_n\to x, x_n^\prime\to x^\prime$ and $t_n(x_n,x_n^\prime)\to(z,z)$ for some $z\in X$, as in Definition~\ref{def1.3}. Then $D_J(x_n,x_n^\prime)\to D_J(x,x^\prime)$ and $D_J(x_n,x_n^\prime)=D_J(t_nx_n,t_nx_n^\prime)\to D_J(z,z)=0$. So $D_J(x,x^\prime)=0$ and thus $(x,x^\prime)\in R_J$. This shows $Q(X)\subseteq R_J$.
\end{proof}

\begin{thm}\label{thm3.6}
If $J$ is a closed invariant subset of $(T,X\times Y)$ and if $(T,X)$ is surjective. Then $S_{\textit{eq}}(X)\subseteq R_J$.
\end{thm}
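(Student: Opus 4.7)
The approach will be a direct synthesis of two results already established in the paper. Recall that throughout Section 3.1 the semiflow $(T,X)$ is assumed minimal, and Theorem~\ref{thm3.6} adds the hypothesis that $(T,X)$ is surjective. Under these hypotheses, Corollary~\ref{cor2.9A} tells us that $S_{\textit{eq}}(X)$ is the \emph{smallest} closed invariant equivalence relation on $X$ that contains the regionally proximal relation $Q(X)$. So it suffices to exhibit $R_J$ as a closed invariant equivalence relation containing $Q(X)$.

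My plan is therefore very short. First, I invoke Lemma~\ref{lem3.5}: it already states that $R_J$ is a closed $T$-invariant equivalence relation on $X$ and that $Q(X)\subseteq R_J$. Second, I apply Corollary~\ref{cor2.9A}, which, for surjective minimal $(T,X)$, characterizes $S_{\textit{eq}}(X)$ as the intersection of all closed invariant equivalence relations containing $Q(X)$. Since $R_J$ is one such relation, we conclude $S_{\textit{eq}}(X)\subseteq R_J$, which is exactly the assertion of Theorem~\ref{thm3.6}.

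There is essentially no obstacle, since the heavy lifting is already done: the continuity and $T$-invariance of $D_J$ were established in Lemma~\ref{lem3.3} (using the quasi-regularity of $\mu$ on $Y$ and the fact that $tJ_x = J_{tx}\pmod 0$), which gave Lemma~\ref{lem3.5}; and the minimality characterization of $S_{\textit{eq}}(X)$ in the surjective minimal setting was built up in Section~2 via Theorem~\ref{thm2.7A} and the Ellis semigroup argument of Corollary~\ref{cor2.9A}. The only point meriting a word of care is verifying that the surjectivity hypothesis is used \emph{only} through Corollary~\ref{cor2.9A}: Lemma~\ref{lem3.5} did not require surjectivity, but the characterization of $S_{\textit{eq}}(X)$ as the smallest closed invariant equivalence relation containing $Q(X)$ does require $(T,X)$ to be surjective (so that the factor $(T,X_{\textit{eq}})$ is minimal invertible and Theorem~\ref{thm2.7A} applies to $\pi\colon X\to X_{\textit{eq}}$). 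With that caveat noted, the proof reduces to one line combining Lemma~\ref{lem3.5} with Corollary~\ref{cor2.9A}.
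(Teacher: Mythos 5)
Your proposal is correct and is precisely the paper's own proof: the paper also derives Theorem~\ref{thm3.6} immediately from Lemma~\ref{lem3.5} (that $R_J$ is a closed invariant equivalence relation containing $Q(X)$) together with Corollary~\ref{cor2.9A} (that $S_{\textit{eq}}(X)$ is the smallest such relation for a surjective minimal semiflow). Your remark on where surjectivity is actually used is a sensible clarification but does not change the argument.
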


\begin{proof}
This follows immediately from Lemma~\ref{lem3.5} and Corollary~\ref{cor2.9A}.
%
\end{proof}

It should be noticed that in Theorem~\ref{thm3.6}, we do not require that $\mu$ is such that $\mu(V)>0$ for all non-empty open subset $V$ of $Y$. For example, if $\mu$ is exactly concentrated on a fixed point, then $R_J=X\times X$ and so $(T,X/R_J)$ is trivial.

\subsection{$S_\textit{eq}(X)=Q(X)$}\label{sec3.2}
Now we will specialize to the case $(T,Y)=(T,X)$, which admits an invariant measure $\mu$. There is no loss of generality in assuming $\mu$ is quasi-regular by Lemma~\ref{lem2.11}.

\begin{thm1.4}
Let $(T,X)$ be a minimal semiflow, which admits an invariant quasi-regular measure $\mu$ on $X$. Then $S_\textit{eq}(X)=Q(X)$.
\end{thm1.4}

\begin{proof}
First by Lemma~\ref{lem2.4A}, $(T,X)$ is surjective.
Thus $Q(X)\subseteq S_{\textit{eq}}(X)$ by Corollary~\ref{cor2.9A}. To prove Theorem~\ref{thm1.4}, it is sufficient to show that $S_{\textit{eq}}(X)\subseteq Q(X)$.

Let $(x,y)\in S_{\textit{eq}}(X)$, and let $V$ be a neighborhood of $x$. Consider the closed invariant subset $J$ of $X\times X$ defined by
$$
J=\overline{{\bigcup}_{x^\prime\in V}T(y,x^\prime)}=\overline{{\bigcup}_{x^\prime\in V}\overline{T(y,x^\prime)}}.
$$
Hence
$$
J=\mathrm{cls}_{X\times X}\left\{(z,w)\,|\,\exists\,x^\prime\in V\textrm{ and }\exists\,\{t_n\}\textrm{ in }T\textit{ s.t. }t_n(y,x^\prime)\to(z,w)\right\}.
$$
Then $V\subset J_y$ (by taking $\{t_n\}=\{e\}$). Now since $(x,y)\in S_{\textit{eq}}(X)$, so $(x,y)\in R_J$ by Theorem~\ref{thm3.6}, and it follows that $V\subseteq J_x\pmod 0$ and so $V\subseteq J_x$ for $\textrm{supp}\,(\mu)=X$.

Summarizing, if $(x,y)\in S_{\textit{eq}}(X)$, and $V$ is a neighborhood of $x$, then there is an $x^\prime\in V$ and a $\tau\in T$ such that $\tau x^\prime$ and $\tau y$ are in $V$. Since $V$ is arbitrary, it follows that $(x,y)\in Q(X)$. Moreover, $(x,x)\in J$ by $V\subseteq J_x$.

The proof of Theorem~\ref{thm1.4} is thus completed.
\end{proof}

Note that the proof just given shows that in the definition of $Q(X)$ one can take one of the nets in $X$ to be constant. Precisely, we have:

\begin{lem}\label{lem3.7}
Under the hypotheses of Theorem~\ref{thm1.4}, the following conditions are pairwise equivalent:
\begin{enumerate}
\item[$(1)$] $(x,y)\in Q(X)$.
\item[$(2)$] There are nets $\{x_n\}$ in $X$ and $\{t_n\}$ in $T$ with $x_n\to x$, $t_nx_n\to x$ and $t_ny\to x$.
\item[$(3)$] There are nets $\{x_n\}$ in $X$ and $\{s_n\}$ in $T$ such that $x_n\to x, s_nx_n\to y$ and $s_ny\to y$.
\end{enumerate}
\end{lem}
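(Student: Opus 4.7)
The strategy is to establish $(1)\Leftrightarrow(2)$ and $(1)\Leftrightarrow(3)$. The implications $(2)\Rightarrow(1)$ and $(3)\Rightarrow(1)$ are immediate from Definition~\ref{def1.3}: in each case one reads off a $Q$-witness by taking the net in the second coordinate to be the constant net $x_n^\prime\equiv y$, with common limit $x$ in (2) and common limit $y$ in (3).

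For $(1)\Rightarrow(2)$ I would simply appeal to the calculation already carried out in the proof of Theorem~\ref{thm1.4}. Given $(x,y)\in Q(X)=S_{\textit{eq}}(X)$ and a neighborhood $V$ of $x$, that argument builds the closed $T$-invariant joining $J=\overline{\bigcup_{x'\in V}T(y,x')}\subseteq X\times X$, notes $V\subseteq J_y$ tautologically, transfers this to $V\subseteq J_x$ via $D_J(x,y)=0$ together with the full support of $\mu$ (which is automatic from minimality of $(T,X)$), and concludes $(x,x)\in J$. Running $V$ through the neighborhood filter of $x$ then produces nets witnessing (2).

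For $(1)\Rightarrow(3)$ the slick idea is to re-use the \emph{same} $J$. Since $J$ is closed and $T$-invariant and contains $(x,x)$, one has $\overline{T(x,x)}\subseteq J$; but by the minimality of $(T,X)$ the set $T(x,x)=\{(tx,tx)\,|\,t\in T\}$ is dense in $\varDelta_X$, so $\varDelta_X\subseteq J$, and in particular $(y,y)\in J$. Unpacking $(y,y)\in\overline{\bigcup_{x'\in V}T(y,x')}$ via a diagonal argument indexed jointly over the neighborhood filter of $x$ and the entourage filter at $(y,y)$ then produces nets $x_n\to x$ in $X$ and $s_n\in T$ with $s_nx_n\to y$ and $s_ny\to y$, which is exactly (3).

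The main conceptual obstacle I expect is the temptation to prove (3) by running the McMahon argument with a symmetrically placed joining such as $\overline{\bigcup_{y'\in W}T(x,y')}$ for $W$ a neighborhood of $y$. That construction does yield a valid $Q$-witness, but the resulting constant net lies in the \emph{first} coordinate rather than the second, producing a distinct variant rather than (3) as stated. Passing through the minimal subsystem $\varDelta_X$ sidesteps this asymmetry elegantly: one joining does double duty, and (2) and (3) are read off from the same membership relation by restricting attention to the diagonal entries $(x,x)$ and $(y,y)$, respectively.
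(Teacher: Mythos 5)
Your proposal is correct and follows essentially the same route as the paper: the implications $(2)\Rightarrow(1)$ and $(3)\Rightarrow(1)$ are read off from the definition of $Q(X)$ with a constant net, $(1)\Rightarrow(2)$ is extracted from the joining argument in the proof of Theorem~\ref{thm1.4}, and $(1)\Rightarrow(3)$ comes from observing that $(x,x)\in J$ forces $(y,y)\in J$ because $\varDelta_X$ is minimal for $(T,X\times X)$. The only difference is one of exposition; the paper states the first two steps as ``obvious'' while you spell out the diagonal/neighborhood-filter bookkeeping.
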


\begin{note}
(1) $\Leftrightarrow$ (2) in flows is due to McMahon~\cite{McM}.
\end{note}

\begin{proof}
``(1) $\Leftrightarrow$ (2)'' and ``(3) $\Rightarrow$'' (1) are obvious. For ``(1) $\Rightarrow$ (3)'', as in the proof of Theorem~\ref{thm1.4}, since $(x,x)\in J$ and $J$ is closed invariant, so by minimality of $(T,X)$, it follows that $(y,y)\in J$. Thus, we can find nets $\{x_n\}$ and $\{s_n\}$ such that $x_n\to x$, $s_nx_n\to y$ and $s_ny\to y$. This proves Lemma~\ref{lem3.7}.
\end{proof}

The (1) $\Leftrightarrow$ (2) of Lemma~\ref{lem3.7} implies the following (cf.~\cite[Corollary~9.10]{Aus} for $T$ a group).

\begin{cor}\label{cor3.8}
Let $(T,X)$ be a minimal semiflow admitting an invariant measure. Then,
\begin{gather*}
Q[y]={\bigcap}_{\alpha\in\mathscr{U}_X}\overline{{\bigcup}_{t\in T}(t^{-1}\alpha)[y]},
\end{gather*}
for all $y\in X$.
\end{cor}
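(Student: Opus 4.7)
The plan is to derive Corollary~\ref{cor3.8} as a direct reformulation of the equivalence (1)$\Leftrightarrow$(2) in Lemma~\ref{lem3.7}. The general identity
$$Q(X)=\bigcap_{\alpha\in\mathscr{U}_X}\mathrm{cls}_{X\times X}\bigcup_{t\in T}t^{-1}\alpha$$
slices at the first coordinate $y$ to a statement of the form "$x\in Q[y]$ iff for each $\alpha$ there is an approximating net $(y_n,x_n)\to(y,x)$ with $(t_ny_n,t_nx_n)\in\alpha$". The sets $(t^{-1}\alpha)[y]$ on the right-hand side are exactly the slices of $t^{-1}\alpha$ at the fixed first coordinate $y$, so the content of the corollary is precisely that the $y$-coordinate of this approximating net can be taken constant. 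That is what Lemma~\ref{lem3.7}(2) provides, after using the symmetry of $Q(X)$.

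Forward direction: assume $x\in Q[y]$. Since $Q(X)$ is symmetric, $(x,y)\in Q(X)$, and Lemma~\ref{lem3.7}(2) yields nets $\{x_n\}$ in $X$ and $\{t_n\}$ in $T$ with $x_n\to x$, $t_nx_n\to x$ and $t_ny\to x$. Fix a symmetric $\alpha\in\mathscr{U}_X$. Eventually both $t_nx_n$ and $t_ny$ lie in $\alpha^{1/2}[x]$, so $(t_ny,t_nx_n)\in\alpha$, that is $x_n\in(t_n^{-1}\alpha)[y]\subseteq\bigcup_{t\in T}(t^{-1}\alpha)[y]$. Since $x_n\to x$ this forces $x\in\overline{\bigcup_{t}(t^{-1}\alpha)[y]}$, for every $\alpha$.

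Reverse direction: assume $x$ lies in the right-hand intersection. Index a net by the directed set $(\mathscr{U}_X,\subseteq)$ (ordered by reverse inclusion): for each $\alpha$, pick $x_\alpha\in\alpha[x]$ and $t_\alpha\in T$ with $x_\alpha\in(t_\alpha^{-1}\alpha)[y]$, i.e.\ $(t_\alpha y,t_\alpha x_\alpha)\in\alpha$. Then $x_\alpha\to x$, and by compactness of $X$ we may pass to a subnet along which $t_\alpha y\to z$ for some $z\in X$; the condition $(t_\alpha y,t_\alpha x_\alpha)\in\alpha$ then forces $t_\alpha x_\alpha\to z$ as well. Applying the definition of $Q(X)$ to the constant net $\{y\}$ together with $\{x_\alpha\}$ and $\{t_\alpha\}$ gives $(y,x)\in Q(X)$, hence $x\in Q[y]$.

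There is no serious obstacle here: Lemma~\ref{lem3.7} is doing all the work. The only care required is to recognize that the slice $(t^{-1}\alpha)[y]$ codes precisely the freedom of holding the first coordinate constant, and to remember to invoke symmetry of $Q(X)$ when translating between $x\in Q[y]$ and $(x,y)\in Q(X)$ before feeding it into Lemma~\ref{lem3.7}(2). No new use of the invariant measure or of the McMahon pseudo-metric is needed beyond what has already been absorbed into Lemma~\ref{lem3.7} via Theorem~\ref{thm1.4}.
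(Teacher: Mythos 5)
Your proof is correct and follows essentially the same route as the paper: the forward inclusion comes from Lemma~\ref{lem3.7}(2) (the constant-coordinate characterization), and the reverse inclusion comes directly from the definition of $Q(X)$ applied to the net $\{x_\alpha\}$ together with the constant net at $y$. The extra care you take with the symmetry of $Q(X)$ and the $\alpha^{1/2}$ entourage argument only makes explicit what the paper leaves implicit.
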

\begin{proof}
First, if $x\in\bigcap_{\alpha\in\mathscr{U}_X}\overline{\bigcup_{t\in T}(t^{-1}\alpha)[y]}$, then for every $\alpha\in\mathscr{U}_X$, there are $x_\alpha\in\alpha[x]$ and $t_\alpha\in T$ with $(t_\alpha y,t_\alpha x_\alpha)\in\alpha$. This shows that $x\in Q[y]$. Conversely, if $x\in Q[y]$, then by Lemma~\ref{lem3.7}, $x\in\overline{\bigcup_{t\in T}(t^{-1}\alpha)[y]}$ for all $\alpha\in\mathscr{U}_X$. Thus $x\in\bigcap_{\alpha\in\mathscr{U}_X}\overline{\bigcup_{t\in T}(t^{-1}\alpha)[y]}$.
\end{proof}

The (1) $\Leftrightarrow$ (3) of Lemma~\ref{lem3.7} is very useful for proving Theorem~\ref{thm1.15} in $\S\ref{sec5}$. The relation in (3) of Lemma~\ref{lem3.7} was first introduced and studied by Veech in \cite{V77}. See $\S\ref{sec3.3}$ for the details.

\subsection{Another proximity relation and the proof of Theorem~\ref{thm1.15}(1)}\label{sec3.3}
Following Veech~\cite[p.~806 and p.~819]{V77}, we introduce the following notation.

\begin{defn}
Let $(T,X)$ be a minimal semiflow.
\begin{enumerate}
\item We say $(T,X)$ satisfies the \textit{Bronstein condition} if $(T,X\times X)$ contains a dense set of minimal points.

\item Given $x\in X$, define $U[x]$ to be the set of $z\in X$ for which there exist nets $t_n\in T$ and $z_n\in X$ such that $z_n\to z$, $t_nz_n\to x$ and $t_nx\to x$.
\end{enumerate}
\end{defn}

It is clear that $P[x]\subseteq U[x]\subseteq Q[x]$ for all $x\in X$. In \cite{V77} Veech proved the following theorem:

\begin{quote}
\textbf{Theorem} (cf.~\cite[Theorem~2.7.6]{V77}).~ {\it Let $(T,X)$ be a minimal flow satisfying the Bronstein condition. Then $S_\textit{eq}[x]=U[x]=Q[x]$ for all $x\in X$.}
\end{quote}

Therefore, it holds that

\begin{quote}
\textbf{Corollary} (cf.~\cite[Theorem~2.7.5]{V77}).~ {\it If $(T,X)$ is a minimal distal flow, then we have $S_\textit{eq}[x]=U[x]=Q[x]$ for all $x\in X$.}
\end{quote}

With an invariant measure instead of the Bronstein condition, by using Lemma~\ref{lem3.7} and Theorem~\ref{thm1.4} we can easily obtain the following.

\begin{thm}\label{thm3.10}
Let $(T,X)$ be a minimal semiflow, which admits an invariant measure. Then $U[x]=Q[x]=S_\textit{eq}[x]$ for all $x\in X$.
\end{thm}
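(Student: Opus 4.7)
The plan is to reduce Theorem 3.10 to an application of Lemma 3.7, exploiting the fact that all the heavy lifting has already been done by Theorem 1.4 and by the pseudo-metric argument that yields Lemma 3.7. Since $(T,X)$ is minimal and admits an invariant measure, Theorem 1.4 gives $Q(X) = S_{\textit{eq}}(X)$ at the level of relations, so pointwise $Q[x] = S_{\textit{eq}}[x]$ for every $x \in X$. The entire content of the theorem therefore collapses to proving the single equality $U[x] = Q[x]$.

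One of the two inclusions is free: the remark made right after the definition of $U[x]$ notes $P[x] \subseteq U[x] \subseteq Q[x]$, so it suffices to establish $Q[x] \subseteq U[x]$. Fix $y \in Q[x]$. Because $Q(X)$ is a symmetric relation (this is stated immediately after the definition of $Q$ in Definition 1.3), we also have $(y,x) \in Q(X)$. I now invoke the equivalence $(1) \Leftrightarrow (3)$ of Lemma 3.7 applied to the ordered pair $(y,x)$ in place of $(x,y)$: there exist a net $\{x_n\}$ in $X$ and a net $\{s_n\}$ in $T$ with
\begin{gather*}
x_n \to y, \qquad s_n x_n \to x, \qquad s_n x \to x.
\end{gather*}
Setting $t_n := s_n$ and $z_n := x_n$, these three convergences are exactly the defining conditions for $y \in U[x]$. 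This gives $Q[x] \subseteq U[x]$, and combining with $U[x] \subseteq Q[x] = S_{\textit{eq}}[x]$ completes the proof.

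I do not anticipate any real obstacle at this stage: the genuine difficulty has been packaged into Lemma 3.7, whose proof in turn rests on the construction of the closed invariant joining $J = \overline{\bigcup_{x' \in V} T(y,x')}$ and the observation, drawn from the proof of Theorem 1.4, that $(y,y) \in J$ by the minimality of $(T,X)$. What is worth highlighting in the writeup is the slight but essential use of symmetry of $Q(X)$: Lemma 3.7(3) in its stated form provides the constant coordinate on the wrong side, and only after swapping $x$ and $y$ does it match the definition of $U[x]$. This is also the step that makes clear why the conclusion is an invariant-measure analogue of Veech's Theorem 2.7.6 from \cite{V77}, with the Bronstein condition replaced by the existence of an invariant measure.
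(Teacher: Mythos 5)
Your proof is correct and follows exactly the route the paper intends: Theorem~\ref{thm1.4} gives $Q[x]=S_\textit{eq}[x]$, and the equivalence $(1)\Leftrightarrow(3)$ of Lemma~\ref{lem3.7}, applied to the pair $(y,x)$ after invoking the symmetry of $Q(X)$, yields $Q[x]\subseteq U[x]$, the reverse inclusion being noted already after the definition of $U[x]$. The paper omits these details, so your explicit observation that the symmetry swap is needed to match the quantifiers in the definition of $U[x]$ is a worthwhile clarification rather than a deviation.
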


Since every distal semiflow always has an invariant measure by Furstenberg's theorem, we can easily obtain the following corollary to Theorem~\ref{thm3.10}.

\begin{cor}\label{cor3.11}
If $(T,X)$ be a minimal distal semiflow, then $S_\textit{eq}[x]=U[x]=Q[x]$ for all $x\in X$.
\end{cor}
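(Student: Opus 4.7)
The plan is to deduce Corollary~\ref{cor3.11} as an essentially immediate consequence of Theorem~\ref{thm3.10}, using the fact that distality supplies the required invariant measure. So the only substantive content is the reduction to the measure-theoretic hypothesis of Theorem~\ref{thm3.10}, and there is no genuine obstacle to overcome.

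In detail, I would proceed as follows. First, I would invoke Theorem~C from Section~\ref{sec1}: since $(T,X)$ is a (minimal) distal semiflow, the enveloping group $(\langle T\rangle,X)$ is a (minimal) distal flow, and Furstenberg's theorem~\cite{F63} then guarantees that $(T,X)$ admits an invariant Borel probability measure on $X$. Equivalently, this is exactly the remark made just before the statement of the corollary, namely that \emph{every distal semiflow always has an invariant measure by Furstenberg's theorem}. Second, with $(T,X)$ minimal and admitting an invariant measure, the hypotheses of Theorem~\ref{thm3.10} are met, so we may apply it directly to conclude that $U[x]=Q[x]=S_{\textit{eq}}[x]$ for every $x\in X$, which is precisely the statement of Corollary~\ref{cor3.11}.

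The step that carries all the weight is Theorem~C, whose proof is cited from \cite{AD}; once we grant that, the corollary is a one-line deduction and no further argument is needed. In particular, no analysis of the pseudo-metric $D_J$ or of the relation $R_J$ has to be repeated, since those have already been used inside Theorem~\ref{thm3.10}. The ``hard part,'' insofar as there is one, is simply recognizing that distality (through Ellis' extension to $\langle T\rangle$ and Furstenberg's fixed-point argument) is strong enough to replace the measure hypothesis; this is exactly the content of Theorem~C.
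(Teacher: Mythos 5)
Your proposal is correct and follows exactly the paper's route: the paper derives Corollary~\ref{cor3.11} from Theorem~\ref{thm3.10} by noting (via Theorem~C and Furstenberg's theorem) that a minimal distal semiflow admits an invariant measure. Nothing further is needed.
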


In fact, we have the following, where $D[x]$ is as in (4) of Definition~\ref{def1.13}.

\begin{lem}[{cf.~\cite[Theorem~5.6]{Dai}}]\label{lem3.12}
If $(T,X)$ is a minimal bi-continuous semiflow, then we have $D[x]=U[x]$ for all $x\in X$.
\end{lem}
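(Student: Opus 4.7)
The plan is to establish the two inclusions separately, working directly with the net characterisation of $D[x]$ recorded in Definition~\ref{def1.13}(4) and the definition of $U[x]$ preceding this lemma.

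The inclusion $D[x]\subseteq U[x]$ is in fact automatic and does not use bi-continuity. Given $y\in D[x]$ witnessed by nets $\{t_n\},\{s_n\}\subseteq T$ and $\{y_n\}\subseteq X$ with $t_nx\to x$, $s_nx\to x$, $y_n\to y$ and $t_ny_n=s_nx$, I would simply set $z_n=y_n$: then $t_nz_n=s_nx\to x$, and combined with $t_nx\to x$ and $z_n\to y$ this directly exhibits $y\in U[x]$.

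For the reverse inclusion $U[x]\subseteq D[x]$, fix $z\in U[x]$ together with nets $\{t_n\}\subseteq T$ and $\{z_n\}\subseteq X$ such that $z_n\to z$, $t_nx\to x$ and $t_nz_n\to x$. Given an entourage $\varepsilon\in\mathscr{U}_X$, bi-continuity of the set-valued map $x\rightsquigarrow t_n^{-1}x$ at the point $t_nz_n$ supplies $\delta_n\in\mathscr{U}_X$ such that every $w\in\delta_n[t_nz_n]$ satisfies $t_n^{-1}(t_nz_n)\subseteq\varepsilon[t_n^{-1}w]$; since $z_n\in t_n^{-1}(t_nz_n)$, this forces the existence of a preimage $y\in t_n^{-1}w$ with $(z_n,y)\in\varepsilon$. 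Using minimality of $(T,X)$ to invoke density of $Tx$ in $X$, I would then choose $s_n\in T$ with $s_nx\in\delta_n[t_nz_n]$ and pick $y_n\in t_n^{-1}(s_nx)$ satisfying $(z_n,y_n)\in\varepsilon$. Reindexing these selections by the directed product of $\mathscr{U}_X$ with the original index set, the resulting nets yield $y_n\to z$, $s_nx\to x$, $t_nx\to x$ and the exact equality $t_ny_n=s_nx$, which places $z\in D[x]$.

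The main obstacle is the passage from the approximate relation $t_nz_n\to x$ available on the $U[x]$ side to the exact equation $t_ny_n=s_nx$ demanded on the $D[x]$ side: density of $Tx$ only supplies an $s_n\in T$ with $s_nx$ near $t_nz_n$, after which we must lift this perturbation through the non-invertible map $t_n$ to produce a genuine preimage of $s_nx$ lying close to the original $z_n$. Bi-continuity of the set-valued inverse is precisely the tool that makes this lift possible. A subtlety to handle carefully is that the moduli $\delta_n$ provided by bi-continuity are only pointwise in $n$, so the construction must be organised as a net indexed jointly by the entourages and the original index rather than as a single sequence with uniform control.
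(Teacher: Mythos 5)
The paper does not actually prove Lemma~\ref{lem3.12}; it defers entirely to the citation \cite[Theorem~5.6]{Dai}, so there is no in-text argument to compare yours against. Taken on its own terms, your proof is correct and is the natural direct argument. The inclusion $D[x]\subseteq U[x]$ is indeed immediate from the net characterisation of $D[x]$ (and, as you observe, uses neither minimality nor bi-continuity). For the reverse inclusion you correctly identify the two tools needed: density of $Tx$ (from minimality) to replace the limit point $t_nz_n$ by an exact orbit point $s_nx$ nearby, and continuity of the set-valued inverse $w\rightsquigarrow t_n^{-1}w$ at $t_nz_n$ to lift that perturbation to a genuine preimage $y_n\in t_n^{-1}(s_nx)$ with $(z_n,y_n)\in\varepsilon$, which produces the exact equation $t_ny_n=s_nx$ demanded by $D[x]$. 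The reindexing over the directed product of $\mathscr{U}_X$ with the original index set is the right way to assemble the final nets.

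One small point to tighten: as written you only require $s_nx\in\delta_n[t_nz_n]$, and the moduli $\delta_n$ furnished by bi-continuity need not shrink as $\varepsilon$ does, so the claimed convergence $s_nx\to x$ does not follow from the construction as literally stated. The fix is trivial --- choose $s_nx\in(\delta_n\cap\varepsilon)[t_nz_n]$ (equivalently, assume $\delta_n\subseteq\varepsilon$ without loss of generality); then $s_nx\in\varepsilon[t_nz_n]$ together with $t_nz_n\to x$ gives $s_nx\to x$ along the product-directed net. With that adjustment the argument is complete.
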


\begin{cor}\label{cor3.13}
If $(T,X)$ be a minimal bi-continuous semiflow, then $D[x]\subseteq Q[x]$ for all $x\in X$.
\end{cor}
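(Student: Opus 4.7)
The plan is to derive Corollary~\ref{cor3.13} as an immediate consequence of Lemma~\ref{lem3.12} together with the trivial chain of inclusions between the proximity-type relations introduced in this section. Since the substantive content is packed into Lemma~\ref{lem3.12} (which identifies $D[x]$ with $U[x]$ for bi-continuous minimal semiflows), the corollary itself should require only a one-line argument.

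Concretely, first I would invoke Lemma~\ref{lem3.12}: under the standing hypotheses that $(T,X)$ is minimal and bi-continuous, we have the equality $D[x]=U[x]$ for every $x\in X$. Next I would recall the observation made immediately after the definition of $U[x]$ in $\S\ref{sec3.3}$, namely that $P[x]\subseteq U[x]\subseteq Q[x]$ for every point $x\in X$ in any minimal semiflow. The right-hand inclusion here is essentially by inspection of the two definitions: if $z\in U[x]$, then there exist nets $\{t_n\}\subseteq T$ and $\{z_n\}\subseteq X$ with $z_n\to z$, $t_nz_n\to x$ and $t_nx\to x$, which is precisely a witness for $(x,z)\in Q(X)$ on taking the constant net $x_n\equiv x$ in Definition~\ref{def1.3}(3). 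Combining the two facts yields $D[x]=U[x]\subseteq Q[x]$, which is exactly the claim.

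There is essentially no obstacle at this stage: the genuine work has already been done in establishing Lemma~\ref{lem3.12}, where bi-continuity is used to translate the definition of $D[x]$ (involving preimages $A^{-1}Ax$) into the $U[x]$-form. Once that equality is in hand, the corollary is purely formal, and in particular no invariant measure is required, in contrast with the sharper identifications $U[x]=Q[x]=S_{\textit{eq}}[x]$ obtained in Theorem~\ref{thm3.10}.
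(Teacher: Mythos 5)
Your proposal is correct and matches the paper's intended derivation: the corollary is stated immediately after Lemma~\ref{lem3.12} and the observation $P[x]\subseteq U[x]\subseteq Q[x]$, so the argument is exactly $D[x]=U[x]\subseteq Q[x]$, with the last inclusion witnessed by the constant net as you describe. No invariant measure is needed, just as you note.
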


\begin{proof}[Proof of Theorem~\ref{thm1.15}(1)] 
This follows at once from Theorem~\ref{thm3.10} and Lemma~\ref{lem3.12}.
\end{proof}
\section{Weak-mixing minimal semiflows}\label{sec4}
In this section, we will characterize a minimal weak-mixing semiflow by using the McMahon pseudo-metric $D_J$ introduced in $\S\ref{sec3.1}$. Moreover, we will consider the chaotic dynamics of minimal weak-mixing semiflow with amenable phase semigroup and with metrizable phase space.

\subsection{Characterizations of minimal weak-mixing semiflows}\label{sec4.1}
Let $(T,X)$ be a surjective semiflow. Recall that $\mathfrak{U}(X)$ is the collection of non-empty open subsets of $X$.
Then it is clear that the weak-mixing is ``highly non-equicontinuous''.
In fact, if $(T,X)$ is weak-mixing, then $Q(X)=X\times X$; and then further its factor $(T,X_{\textit{eq}})$ is trivial by Corollary~\ref{cor2.9A}.
\newpage

\begin{lem}\label{lem4.1}
Let $(T,X)$ be a minimal surjective semiflow. Then the following two conditions are equivalent:
\begin{enumerate}
\item[$(1)$] $(T,X)$ has no non-trivial distal factor.

\item[$(2)$] $(T,X)$ has no non-trivial equicontinuous factor; that is, $S_\textit{eq}(X)=X\times X$.
\end{enumerate}
\end{lem}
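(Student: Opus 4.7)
The plan is to treat the two implications separately, with (1)$\Rightarrow$(2) essentially trivial and (2)$\Rightarrow$(1) reduced to the Furstenberg Structure Theorem for distal minimal semiflows stated in $\S\ref{sec2.5}$.

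For (1)$\Rightarrow$(2): Any equicontinuous factor of $(T,X)$ is itself minimal and surjective (factors inherit both properties from $(T,X)$), hence distal by Lemma~\ref{lem2.2}(2). So a non-trivial equicontinuous factor would immediately be a non-trivial distal factor, and the contrapositive yields the claim.

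For (2)$\Rightarrow$(1): I would argue by contradiction. Suppose $(T,X)$ admits a non-trivial distal factor $\pi\colon(T,X)\to(T,Y)$. Since $(T,Y)$ is minimal and distal, apply Furstenberg's Structure Theorem to the (trivially defined) epimorphism of $(T,Y)$ onto the semiflow on a one-point space. This produces an ordinal $\theta\ge 1$ and a projective system $\{(T,Y_\lambda)\colon 0\le\lambda\le\theta\}$ with $Y_0$ a singleton, $Y_\theta=Y$, and each successor extension $(T,Y_{\lambda+1})\xrightarrow{\pi_{\lambda+1,\lambda}}(T,Y_\lambda)$ relatively equicontinuous. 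Let $\lambda_0$ be the least ordinal with $Y_{\lambda_0}$ non-trivial; this exists because $Y_\theta=Y$ is non-trivial. If $\lambda_0$ were a limit ordinal, then by the definition of the projective limit, $Y_{\lambda_0}$ would be a minimal subset of $\prod_{\lambda<\lambda_0}Y_\lambda$, which is a product of singletons and therefore itself a singleton\,---\,contradicting the choice of $\lambda_0$. Hence $\lambda_0=\mu+1$ is a successor. Then $Y_\mu$ is a one-point space and $(T,Y_{\mu+1})\to(T,Y_\mu)$ is a relatively equicontinuous extension of a singleton base; the condition ``$\pi(x)=\pi(x')$'' in the definition of relative equicontinuity holds tautologically, so this reduces to $(T,Y_{\mu+1})$ being equicontinuous. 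Composing the canonical maps $(T,X)\xrightarrow{\pi}(T,Y)=(T,Y_\theta)\xrightarrow{\pi_{\theta,\lambda_0}}(T,Y_{\lambda_0})$ then exhibits a non-trivial equicontinuous factor of $(T,X)$, contradicting (2).

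The main obstacle is really just the careful case-split on whether $\lambda_0$ is a limit or a successor, together with verifying that relative equicontinuity over a one-point base coincides with ordinary equicontinuity; both are bookkeeping once the definitions of $\S\ref{sec2.5}$ are unpacked. Notably, this lemma does not require the McMahon pseudo-metric machinery or the existence of an invariant measure\,---\,it is a purely structural consequence of the Furstenberg tower, and so no additional hypotheses beyond minimality and surjectivity appear in the statement.
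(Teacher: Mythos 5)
Your proof is correct and follows exactly the route the paper intends: $(1)\Rightarrow(2)$ via Lemma~\ref{lem2.2} (surjective equicontinuous implies distal, factors inheriting minimality and surjectivity), and $(2)\Rightarrow(1)$ by running Furstenberg's Structure Theorem over a one-point base and extracting the first non-trivial stage of the tower, which is necessarily a successor stage and hence an (ordinary) equicontinuous factor. The paper's own proof is a two-line sketch citing precisely these two ingredients; you have simply supplied the bookkeeping it omits.
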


\begin{proof}
$(1)\Rightarrow(2)$ by Lemma~\ref{lem2.2}. And $(2)\Rightarrow (1)$ follows easily from Furstenberg's structure theorem stated in $\S\ref{sec2.5}$.
\end{proof}

Therefore, any minimal surjective semiflow is ``highly non-equicontinuous'' if and only if it is ``highly non-distal.''

\begin{defn}[{cf.~\cite{Pel}}]\label{def4.2}
Let $(T,X)$ and $(T,Y)$ be two semiflows with compact $\textrm{T}_2$ phase spaces and with the same phase semigroup $T$.
We will say $(T,X)$ is \textit{weakly disjoint} from $(T,Y)$, denoted $X\perp^wY$, if $(T,X\times Y)$ is a T.T. semiflow.
\end{defn}

It should be noted here that since our phase spaces need not be metrizable, Definition~\ref{def4.2} is not equivalent to requiring that $(T,X\times Y)$ is point-transitive even for $T$ in groups as in \cite[p.~150]{Aus}.

Let $(f,X)$ be a cascade where $f$ is a homeomorphism on a compact metric space, and assume $(f,X)$ has no non-trivial equicontinuous factor. Then $(f,X\times X)$ is T.T. (\cite{KR}). Next we will consider an open question of Furstenberg.

Let $T$ be a semigroup; then the class of minimal semiflow with phase semigroup $T$ will be denoted by $\texttt{SF}_{\min}$, and we write
\begin{gather*}
\texttt{SF}_{tt}=\{(T,X)\,|\,(T,X)\textrm{ is T.T.}\}\quad \textrm{and}\quad\texttt{SF}_{wm}=\{(T,X)\,|\, (T,X\times X)\textrm{ is T.T.}\}.
\end{gather*}
Note that all the phase spaces are compact $\textrm{T}_2$ for our semiflows. Moreover, $\texttt{SF}_{\min}\subset\texttt{SF}_{tt}$ for all semigroup $T$.

Furstenberg's \cite[Proposition~II.11]{F67} asserts that $\texttt{SF}_{wm}\times\texttt{SF}_{\min}\subset\texttt{SF}_{tt}$ if $T=\mathbb{Z}_+$. In view of this, he further asked the following problem:

\begin{quote}
\textit{Is it true for} $T=\mathbb{Z}_+$ \textit{that} $\texttt{SF}_{wm}\times\texttt{SF}_{tt}\subset\texttt{SF}_{tt}$\textit{?} (See \cite[Problem~F]{F67}.)
\end{quote}
This is false in general; however we will consider the following question:
\begin{quote}
{\it $\left(\texttt{SF}_{wm}\cap\texttt{SF}_{\min}\right)\times\left(\texttt{SF}_{tt}\cap \textit{\textbf{?}}\right)\subset\texttt{SF}_{tt}$}
\end{quote}
As to this, there is an important theorem due to Veech:
\begin{quote}
\textbf{Theorem~J} (Veech~\cite[Theorem~2.1.6]{V77}).~{\it Let $T$ be a group. Let $(T,X)$ be ``incontractible'' minimal and assume $(T,X)$ has no nontrivial equicontinuous factor. Let $(T,Y)$ be T.T. having a dense set of a.p. points. Then $(T,X\times Y)$ is a T.T. flow.}
\end{quote}

Next we will introduce a class of semiflows which are weaker than minimal semiflows with amenable phase semigroups.

\begin{defn}\label{def4.3}
Let $T$ be a discrete semigroup and let $X$ vary in the set of compact $\textrm{T}_2$ spaces.
\begin{enumerate}
\item By the \textit{ergodic center} of $(T,X)$, denoted by $\mathscr{C}_{erg}(T,X)$, we mean the smallest closed invariant subset of $X$ of $\mu$-measure $1$, for all invariant measure $\mu$ of $(T,X)$. If $(T,X)$ has no invariant measure, then we shall say $\mathscr{C}_{erg}(T,X)=\emptyset$.

\item $(T,X)$ is called an \textit{E-semiflow}, denoted $(T,X)\in\texttt{SF}_{e}$, if $(T,X)\in\texttt{SF}_{tt}$ and $\mathscr{C}_{erg}(T,X)=X$.
Equivalently, $(T,X)\in\texttt{SF}_{e}$ if and only if it is T.T. with full ergodic center.
\end{enumerate}
\end{defn}

The following lemma is a simple observation and so we will omit its proof details.

\begin{lem}\label{lem4.4}
Let $T$ be an amenable semigroup. If $(T,X)$ is T.T. such that the set of minimal points is dense in $X$, then $(T,X)\in\texttt{SF}_{e}$.
\end{lem}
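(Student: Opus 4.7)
The plan is to unpack the definition of $\mathscr{C}_{erg}(T,X)$ and show, under the hypotheses, that every non-empty open subset of $X$ meets the support of some invariant measure; since the ergodic center is closed, this forces $\mathscr{C}_{erg}(T,X)=X$. Topological transitivity is already assumed, so the only real content is the second condition in Definition~\ref{def4.3}(2).

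First I would record the soft fact that for any invariant Borel probability measure $\mu$ on $X$, the closed set $\mathrm{supp}(\mu)$ is forward $T$-invariant: if $y=tx$ with $x\in\mathrm{supp}(\mu)$ and $V$ is an open neighborhood of $y$, then $t^{-1}V$ is open and contains $x$, so $\mu(V)=\mu(t^{-1}V)>0$ by invariance. Consequently $\overline{\bigcup_{\mu}\mathrm{supp}(\mu)}$, the union ranging over all invariant probability measures, is a closed forward-invariant subset carrying every invariant measure, and by standard arguments this set is precisely $\mathscr{C}_{erg}(T,X)$.

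Next I would prove that $\mathscr{C}_{erg}(T,X)$ is dense in $X$. Let $U\in\mathfrak{U}(X)$ be arbitrary. By the density of minimal points, choose a minimal point $x_0\in U$ and set $M=\mathrm{cls}_XTx_0$. Then $M$ is a minimal subset of $X$ with $x_0\in M\cap U$. Viewing $(T,M)$ as a semiflow in its own right with amenable phase semigroup, Definition 6 of the standing terminologies produces an invariant Borel probability measure $\nu$ on $M$; extend it trivially to a Borel probability measure $\mu$ on $X$ by $\mu(B)=\nu(B\cap M)$, which is plainly $T$-invariant because $M$ is $T$-invariant. Since $\mathrm{supp}(\mu)$ is a non-empty closed forward-invariant subset of the minimal set $M$, minimality forces $\mathrm{supp}(\mu)=M$; in particular $x_0\in\mathrm{supp}(\mu)\cap U$, so $\mu(U)>0$ and $U\cap\mathscr{C}_{erg}(T,X)\neq\emptyset$.

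Because $U$ was arbitrary, $\mathscr{C}_{erg}(T,X)$ is a dense closed subset of $X$, hence equals $X$. Combined with the hypothesis that $(T,X)\in\mathtt{SF}_{tt}$, this yields $(T,X)\in\mathtt{SF}_{e}$. The only mildly delicate step is verifying that $\mathrm{supp}(\mu)$ is forward $T$-invariant in the semiflow (as opposed to flow) setting, but this is immediate from the measure-invariance identity $\mu(V)=\mu(t^{-1}V)$; everything else is routine.
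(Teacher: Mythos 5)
Your proof is correct, and it is essentially the argument the paper has in mind: the paper explicitly omits the proof of Lemma~\ref{lem4.4} as ``a simple observation,'' and the intended reasoning is precisely yours --- pick a minimal point in an arbitrary $U\in\mathfrak{U}(X)$, use amenability to get an invariant measure on its orbit closure $M$, note that the support is a non-empty closed invariant subset of $M$ and hence all of $M$ by minimality, and conclude that $\mathscr{C}_{erg}(T,X)$ is dense and therefore equal to $X$. Your identification of $\mathscr{C}_{erg}(T,X)$ with the closure of the union of supports is the same one the paper itself invokes in the proof of Lemma~\ref{lem4.5}, so nothing is missing.
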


$\texttt{SF}_{e}$ is an extension of the class of \textit{E}-systems of Glasner and Weiss~\cite{GW1, GW2}. If $(T,X)\in\texttt{SF}_{e}$ with $T$ a countable discrete semigroup, then $N_T(U,V)$ is syndetic in $T$ for all $U,V\in \mathfrak{U}(X)$.

\begin{lem}\label{lem4.5}
If $(T,X)\in\texttt{SF}_{e}$ and $U\in\mathfrak{U}(X)$, then there exists an invariant measure $\mu$ of $(T,X)$ such that $\mu(U)>0$. Hence every $(T,X)$ in $\texttt{SF}_{e}$ is surjective.
\end{lem}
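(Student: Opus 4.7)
The plan is to unpack the hypothesis $\mathscr{C}_{erg}(T,X) = X$ into a concrete statement about supports of invariant measures and then derive both conclusions from there. By Lemma~\ref{lem2.11} we may assume every invariant Borel probability measure on $X$ is quasi-regular (and hence regular, since $X$ is compact $\mathrm{T}_2$), so each such $\mu$ possesses a genuine support $\mathrm{supp}(\mu)$, namely the smallest closed set of full $\mu$-measure. A quick check shows $\mathrm{supp}(\mu)$ is forward $T$-invariant: if $x \in \mathrm{supp}(\mu)$ and $V$ is an open neighbourhood of $tx$, then $t^{-1}V$ is an open neighbourhood of $x$, so $\mu(V)=\mu(t^{-1}V)>0$, giving $tx \in \mathrm{supp}(\mu)$. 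The key identification to record is
\[
\mathscr{C}_{erg}(T,X) \;=\; \overline{{\bigcup}_{\mu}\mathrm{supp}(\mu)},
\]
where $\mu$ ranges over all invariant Borel probability measures on $X$: on one hand any closed invariant set of full $\mu$-measure must contain $\mathrm{supp}(\mu)$; on the other hand the right-hand side is itself a closed, forward $T$-invariant set of $\mu$-measure one for every invariant $\mu$, so it is the minimum.

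Granting this identification, the first assertion is immediate. If $U\in\mathfrak{U}(X)$ then density of $\bigcup_{\mu}\mathrm{supp}(\mu)$ in $X$ forces $U\cap\mathrm{supp}(\mu)\neq\emptyset$ for some invariant $\mu$, and then $\mu(U)>0$ by the defining property of the support.

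The ``hence'' clause then follows by a short contradiction. Fix $t\in T$; the set $tX$ is the continuous image of a compact space, hence closed in $X$. If $tX\neq X$ then $U=X\setminus tX\in\mathfrak{U}(X)$, and by the first part we can pick an invariant $\mu$ with $\mu(U)>0$. But $tx\in tX$ for every $x\in X$, so $t^{-1}U=\emptyset$, whence $\mu(U)=\mu(t^{-1}U)=0$ by $T$-invariance of $\mu$. This contradiction forces $tX=X$ for every $t\in T$, which is the desired surjectivity.

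The only non-routine point I anticipate is justifying the identification of $\mathscr{C}_{erg}(T,X)$ with the closure of the union of supports; it rests on Lemma~\ref{lem2.11} together with the standard characterisation of the support of a regular measure. Once that identification is in hand, both conclusions drop out cleanly, and it is worth noticing that the topological transitivity clause in the definition of $\texttt{SF}_{e}$ plays no role whatsoever in the argument---the full-ergodic-center hypothesis alone carries everything.
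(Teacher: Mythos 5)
Your argument is correct and is essentially the paper's own proof: the paper likewise begins by noting that $\mathscr{C}_{erg}(T,X)$ equals the closure of the union of the supports of all invariant measures (which you justify in more detail via Lemma~\ref{lem2.11}), extracts $\mu$ with $\mu(U)>0$ from density of that union, and derives surjectivity from the contradiction $\mu(X\setminus tX)>0$ versus invariance of $\mu$ applied to the Borel set $tX$. Your $t^{-1}(X\setminus tX)=\emptyset$ computation is just the complementary form of the paper's $\mu(tX)=\mu(t^{-1}tX)=\mu(X)=1$.
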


\begin{proof}
First we note that $\mathscr{C}_{erg}(T,X)$ is just equal to the closure of the union of the supports of all invariant measures of $(T,X)$. Then there is some invariant measure $\mu$ such that $\textrm{supp}\,(\mu)\cap U\not=\emptyset$ so that $\mu(U)>0$. Finally, let $t\in T$. Since $X\setminus tX$ is open and $tX$ is Borel, so if $tX\not=X$ we have $1>\mu(tX)=\mu(t^{-1}tX)=\mu(X)=1$ a contradiction. Thus $(T,X)$ is surjective. The proof is complete.
\end{proof}

\begin{thm}\label{thm4.6}
Let $(T,X)\in\texttt{SF}_{\min}$ be surjective. Then $(T,X)$ has no non-trivial distal factor if and only if $X\perp^wY$ for all $(T,Y)\in\texttt{SF}_{e}$.
\end{thm}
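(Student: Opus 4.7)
The plan is to prove the two directions separately, using Lemma~\ref{lem4.1} to move between ``no non-trivial distal factor'' and ``no non-trivial equicontinuous factor,'' and the McMahon pseudo-metric machinery for the harder direction.

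For $(\Leftarrow)$ I argue by contrapositive. Suppose $(T,X)$ has a non-trivial distal factor; Lemma~\ref{lem4.1} then produces a non-trivial equicontinuous factor $\pi\colon(T,X)\to(T,X_\textit{eq})$. Put $Y=X_\textit{eq}$: as an equicontinuous minimal system it is T.T.\ and admits an invariant measure of full support (Haar measure on the compact Ellis group pushed down to $Y$), hence $(T,Y)\in\texttt{SF}_e$. Since T.T.\ passes to factors and $\pi\times\mathrm{id}_Y\colon X\times Y\to Y\times Y$ is an epimorphism, it suffices to show $(T,Y\times Y)$ is not T.T. By equicontinuity $Y$ carries a $T$-invariant compatible uniformity; for a sufficiently small symmetric entourage $\beta$ the standard inclusion $\overline{\beta}\subseteq \beta\circ\beta$ gives $\overline{\beta}\neq Y\times Y$, so $\beta$ is a non-empty $T$-invariant open subset of $Y\times Y$ that fails to be dense.

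For $(\Rightarrow)$ I assume $(T,X)$ has no non-trivial distal factor, equivalently $S_\textit{eq}(X)=X\times X$ by Lemma~\ref{lem4.1}, take $(T,Y)\in\texttt{SF}_e$ arbitrary, and argue by contradiction: if $(T,X\times Y)$ is not T.T., there exist basic non-empty opens $W_i=U_i\times V_i$ $(i=1,2)$ with $N_T(W_1,W_2)=\emptyset$. Define the closed $T$-invariant subset
\[
J=\bigcap_{t\in T}t^{-1}\bigl((X\times Y)\setminus W_2\bigr),
\]
which satisfies $W_1\subseteq J$ and $J\cap W_2=\emptyset$. A direct computation identifies the sections as $J_x=Y\setminus O_x$ with $O_x=\bigcup_{t\in N_T(x,U_2)}t^{-1}V_2$ open in $Y$; surjectivity of $(T,X)$ yields $\bigcup_{x\in X}O_x=T^{-1}V_2$, and $W_1\subseteq J$ forces $V_1\cap O_{x_1}=\emptyset$ for every $x_1\in U_1$.

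Now I feed $J$ into the McMahon machinery. For every invariant measure $\mu$ on $Y$ (such measures exist since $Y\in\texttt{SF}_e$), Theorem~\ref{thm3.6} gives $S_\textit{eq}(X)\subseteq R_J$, so the hypothesis $S_\textit{eq}(X)=X\times X$ forces $R_J=X\times X$, i.e.\ $\mu(O_x\triangle O_{x'})=0$ for all $x,x'\in X$ and all invariant $\mu$ on $Y$. The crux, and the step I expect to be the main obstacle, is upgrading this purely measure-theoretic coincidence across all $\mu$ to a topological identity of the closures $\overline{O_x}$. The bridge is Lemma~\ref{lem4.5}: every non-empty open $V\subseteq Y$ has positive mass for some invariant $\mu$, so $O_x\setminus O_{x'}$ contains no non-empty open set; hence $O_{x'}$ is dense in $O_x$ and, by symmetry, $\overline{O_x}$ is independent of $x\in X$. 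Call this common closure $F$; the identity $\bigcup_x O_x=T^{-1}V_2$ then gives $F=\overline{T^{-1}V_2}$. Since $V_1$ is open with $V_1\cap O_{x_1}=\emptyset$, also $V_1\cap F=\emptyset$, yielding $V_1\cap T^{-1}V_2=\emptyset$, i.e.\ $N_T(V_1,V_2)=\emptyset$; this contradicts T.T.\ of $(T,Y)$ and completes the proof.
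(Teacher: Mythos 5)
Your proof is correct and follows essentially the same route as the paper's: Lemma~\ref{lem4.1} to translate between distal and equicontinuous factors, the maximal equicontinuous factor itself as the witness in $\texttt{SF}_{e}$ for the backward direction, and Theorem~\ref{thm3.6} together with Lemma~\ref{lem4.5} for the forward direction. The only cosmetic difference is that the paper exhibits a single pair $x,\tau x$ with $D_J(x,\tau x)>0$ to contradict $R_J=X\times X$, whereas you assume $R_J=X\times X$ for every invariant $\mu$ and push that through to $N_T(V_1,V_2)=\emptyset$; both contradictions rest on the same facts.
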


\begin{proof}
The ``only if'' part: By Lemma~\ref{lem4.1} we may assume $(T,X)$ has no non-trivial equicontinuous factor $X_\textit{eq}$. Let $(T,Y)$ be TT with $\mathscr{C}_{erg}(T,Y)=Y$.
If $(T,X\times Y)\not\in\texttt{SF}_{tt}$, then there would be an invariant closed subset $J$ of $X\times Y$ such that $\textrm{Int}_{X\times Y}J\not=\emptyset$ and that $J\not=X\times Y$. So there could be found a point $x\in X$ and an $U\in\mathfrak{U}(Y)$ such that $U\subset J_x$. Moreover, $J_x\not=Y$. Let $V=Y-J_x$. Then $V\not=\emptyset$. Since $(T,Y)$ is TT, so there is $\tau\in T$ such that $\tau U\cap V\not=\emptyset$.

Since $U\cap\tau^{-1}V\in\mathfrak{U}(Y)$, we can find an invariant quasi-regular Borel probability $\mu$ on $Y$ such that $\mu(U\cap\tau^{-1}V)>0$.
Since $\tau U\cap V=\tau(U\cap\tau^{-1}V)$, hence $\mu(\tau J_x\cap V)>0$ and then $\mu(J_{\tau x}\cap V)>0$. Having let $D_J$ be the McMahon pseudo-metric as in Definition~\ref{def3.2} associated to $J$ and $\mu$, $D_J(x,\tau x)>0$. Let $R_J$ be the closed invariant equivalence relation defined by $D_J$ on $X$. Then $R_J\not=X\times X$, and by Theorem~\ref{thm3.6}, $(T,X/R_J)$ is a non-trivial equicontinuous factor of $(T,X)$. This contradiction shows that $J=X\times Y$ and therefore $(T,X\times Y)$ must be TT
and so $X\perp^wY$.

The ``if" part: Let $X\perp^wY$ for all $(T,Y)\in\texttt{SF}_{e}$. To be contrary, assume $X_\textit{eq}$ is not a singleton set. Then $(T,X_\textit{eq})$ is minimal distal by Lemma~\ref{lem2.2}. So $(T,X_\textit{eq})\in\texttt{SF}_{e}$ and thus $X\perp^wX_\textit{eq}$. Therefore $X_\textit{eq}\perp^wX_\textit{eq}$ and $(T,X_\textit{eq})$ is weak-mixing. This is a contradiction to Lemma~\ref{lem2.3}. Thus $X_\textit{eq}$ must be a singleton set.

Therefore the proof of Theorem~\ref{thm4.6} is completed.
\end{proof}

It should be noted that $(T,X)$ need not admit an invariant measure in the above Theorem~\ref{thm4.6}.

\begin{lem}\label{lem4.7}
Let $(T,X)\in\texttt{SF}_{\min}$ admits an invariant measure. Then it has no non-trivial equicontinuous factor if and only if it is weak-mixing.
\end{lem}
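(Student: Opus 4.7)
The plan is to deduce the lemma from Theorem~\ref{thm1.4} and Theorem~\ref{thm4.6}, after first observing that $(T,X)$ itself lies in $\texttt{SF}_e$. Indeed, under the standing hypotheses $(T,X)$ is surjective by Lemma~\ref{lem2.4A}, and any invariant Borel probability $\mu$ on $X$ has full support: $\textrm{supp}\,(\mu)$ is a non-empty closed forward-$T$-invariant subset of the minimal semiflow $(T,X)$, so by minimality it equals $X$. Since $(T,X)$ is T.T. (being minimal), this gives $\mathscr{C}_{erg}(T,X)=X$ and hence $(T,X)\in\texttt{SF}_e$.

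For the ``if'' direction (weak-mixing $\Rightarrow$ no non-trivial equicontinuous factor), I would work directly with the uniform-structure description
\begin{gather*}
Q(X)={\bigcap}_{\alpha\in\mathscr{U}_X}\mathrm{cls}_{X\times X}{\bigcup}_{t\in T}t^{-1}\alpha
\end{gather*}
recorded just after Definition~\ref{def1.3}. For each $\alpha\in\mathscr{U}_X$ the set $A_\alpha=\bigcup_{t\in T}t^{-1}\alpha$ is a non-empty open subset of $X\times X$, and its complement $B_\alpha$ is forward $T$-invariant under the diagonal action: if $(x,y)\in B_\alpha$ and $s\in T$, then $(tsx,tsy)\not\in\alpha$ for every $t\in T$, hence $(sx,sy)\in B_\alpha$. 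Weak-mixing of $(T,X)$ now forces the closed $T$-invariant set $B_\alpha$ to have empty interior, so $A_\alpha$ is dense and $Q(X)=X\times X$. Theorem~\ref{thm1.4} then yields $S_\textit{eq}(X)=Q(X)=X\times X$, so $X_\textit{eq}$ is a singleton.

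For the ``only if'' direction, assume $(T,X)$ has no non-trivial equicontinuous factor. By Lemma~\ref{lem4.1} it also has no non-trivial distal factor, and Theorem~\ref{thm4.6} then provides $X\perp^wY$ for every $(T,Y)\in\texttt{SF}_e$. Taking $Y=X$, which is permitted by the preliminary observation above, yields $X\perp^wX$, that is, $(T,X\times X)$ is T.T., so $(T,X)$ is weak-mixing.

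The main obstacle is not really located in the present lemma but is already packaged into Theorem~\ref{thm4.6}: the genuine dynamical content, namely turning a proper closed $T$-invariant subset of $X\times X$ into a non-trivial equicontinuous factor of $(T,X)$, is handled there via the McMahon pseudo-metric of $\S\ref{sec3.1}$. Granted Theorems~\ref{thm1.4} and \ref{thm4.6}, the ``if'' direction reduces to the definition-chase above and the ``only if'' direction reduces to the short verification that minimality together with an invariant measure forces $(T,X)\in\texttt{SF}_e$.
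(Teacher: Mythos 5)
Your proposal is correct and follows essentially the same route as the paper: surjectivity via Lemma~\ref{lem2.4A} and membership in $\texttt{SF}_e$, then Theorem~\ref{thm4.6} (with $Y=X$) for one direction and $Q(X)=X\times X$ for the other. The only cosmetic difference is that the paper closes the weak-mixing direction by citing Corollary~\ref{cor2.9A} rather than the full strength of Theorem~\ref{thm1.4}, and you supply some routine verifications (full support of $\mu$, density of $\bigcup_t t^{-1}\alpha$) that the paper leaves implicit.
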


\begin{proof}
First $(T,X)$ is surjective by Lemma~\ref{lem2.4A}, and moreover, $(T,X)\in\texttt{SF}_{e}$. If $(T,X)$ has no non-trivial equicontinuous factor, then it is weak-mixing from Theorem~\ref{thm4.6}. Conversely, if $(T,X)$ is weak-mixing, then $Q(X)=X\times X$ so that $(T,X)$ has no non-trivial equicontinuous factor by Corollary~\ref{cor2.9A}. The proof is complete.
\end{proof}

The following Corollary~\ref{cor4.8} implies the first ``if and only if'' of Theorem~\ref{thm1.7}. In addition, if $T$ is a nilpotent group (then it is amenable), every minimal flow with phase group $T$ is incontractible; thus for this case the forgoing Theorem~J of Veech may follow from Corollary~\ref{cor4.8} and Theorem~\ref{thm4.12} below.

\begin{cor}\label{cor4.8}
Let $(T,X)\in\texttt{SF}_{\min}$ admits an invariant measure. Then $(T,X)$ is weak-mixing iff $X\perp^wY$ for all $(T,Y)\in\texttt{SF}_{e}$.
\end{cor}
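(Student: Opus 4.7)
The plan is to deduce Corollary~\ref{cor4.8} as an essentially formal consequence of the three results we have already assembled: Theorem~\ref{thm4.6}, Lemma~\ref{lem4.7}, and Lemma~\ref{lem4.1}. The hypothesis ``$(T,X) \in \texttt{SF}_{\min}$ admits an invariant measure'' gives us surjectivity of $(T,X)$ by Lemma~\ref{lem2.4A}, which is exactly the standing hypothesis required for both Theorem~\ref{thm4.6} and Lemma~\ref{lem4.1}.

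First I would chain the equivalences as follows. By Lemma~\ref{lem4.7}, $(T,X)$ is weak-mixing if and only if $(T,X)$ has no non-trivial equicontinuous factor, i.e.\ $S_\textit{eq}(X) = X\times X$. Since $(T,X)$ is surjective and minimal, Lemma~\ref{lem4.1} upgrades this to the equivalent statement that $(T,X)$ has no non-trivial distal factor. Finally, since $(T,X)$ is surjective minimal, Theorem~\ref{thm4.6} says precisely that having no non-trivial distal factor is equivalent to $X \perp^w Y$ for every $(T,Y) \in \texttt{SF}_e$.

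Concatenating the three ``iff'''s yields the desired statement. I expect no genuine obstacle: the only bookkeeping is to verify that Theorem~\ref{thm4.6}'s surjectivity requirement is met (via Lemma~\ref{lem2.4A}) and that Lemma~\ref{lem4.1}'s equivalence ``no non-trivial distal factor $\iff$ no non-trivial equicontinuous factor'' applies in both directions, which it does since $(T,X)$ is surjective minimal. Nothing in the argument needs the McMahon pseudo-metric directly at this stage, because all that machinery has been absorbed into Theorem~\ref{thm4.6} and Lemma~\ref{lem4.7}. The proof can therefore be written in two or three lines simply citing the previous results in the right order.
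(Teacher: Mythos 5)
Your proposal is correct and follows exactly the route the paper intends: Corollary~\ref{cor4.8} is left without an explicit proof precisely because it is the concatenation of Lemma~\ref{lem4.7}, Lemma~\ref{lem4.1}, and Theorem~\ref{thm4.6}, with surjectivity supplied by Lemma~\ref{lem2.4A}. Nothing is missing.
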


\begin{cor}[{cf.~\cite[Corollary~11.8]{Aus} for $T$ in abelian groups}]\label{cor4.9}
Let $T$ be an amenable semigroup and $(T,X)\in\texttt{SF}_{\min}$. Then $(T,X)$ is weak-mixing iff $X\perp^wY$ for all $(T,Y)\in\texttt{SF}_{\min}$ iff $X\perp^wY$ for all $(T,Y)\in\texttt{SF}_{e}$.
\end{cor}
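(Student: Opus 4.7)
The plan is to prove the pair of equivalences via the three-step chain (a)$\Rightarrow$(c)$\Rightarrow$(b)$\Rightarrow$(a), where (a) denotes weak-mixing of $(T,X)$, (b) denotes $X\perp^wY$ for all $(T,Y)\in\texttt{SF}_{\min}$, and (c) denotes $X\perp^wY$ for all $(T,Y)\in\texttt{SF}_{e}$. Since $T$ is amenable, every semiflow with phase semigroup $T$ admits an invariant measure; in particular $(T,X)$ does. This puts us exactly in the setting of Corollary~\ref{cor4.8}, which immediately yields (a)$\Leftrightarrow$(c).

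For (c)$\Rightarrow$(b), I would verify the inclusion $\texttt{SF}_{\min}\subseteq\texttt{SF}_{e}$ under amenability. If $(T,Y)\in\texttt{SF}_{\min}$, then $(T,Y)$ is T.T. and every point of $Y$ is a minimal point, so in particular the set of minimal points is dense in $Y$. Lemma~\ref{lem4.4} (which is the place where amenability of $T$ is actually used) then places $(T,Y)$ in $\texttt{SF}_{e}$. Consequently, weak disjointness against every member of $\texttt{SF}_{e}$ specializes to weak disjointness against every member of $\texttt{SF}_{\min}$.

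For (b)$\Rightarrow$(a), the observation is that $(T,X)$ itself lies in $\texttt{SF}_{\min}$ by hypothesis, so taking $Y=X$ in (b) is legitimate. This gives $X\perp^wX$, which by Definition~\ref{def4.2} means $(T,X\times X)$ is T.T., and that is exactly the definition of weak-mixing for $(T,X)$.

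There is no genuine obstacle: the corollary is a repackaging of Corollary~\ref{cor4.8} via two cheap observations, namely the inclusion $\texttt{SF}_{\min}\subseteq\texttt{SF}_{e}$ guaranteed by Lemma~\ref{lem4.4} under amenability, and the tautological choice $Y=X$ in the minimal-semiflow family. The slight asymmetry is worth flagging: (a)$\Leftrightarrow$(c) genuinely needs the measure-theoretic content of $\S\ref{sec3.1}$ (via Corollary~\ref{cor4.8}), while (b)$\Leftrightarrow$(c) is purely formal once one knows that minimal amenable systems have full ergodic center.
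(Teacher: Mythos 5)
Your proposal is correct and is exactly the argument the paper intends (the paper states Corollary~\ref{cor4.9} without a written proof, leaving it as an immediate consequence of Corollary~\ref{cor4.8}): amenability gives the invariant measure needed for Corollary~\ref{cor4.8}, Lemma~\ref{lem4.4} gives $\texttt{SF}_{\min}\subseteq\texttt{SF}_{e}$, and taking $Y=X$ closes the cycle of implications.
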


Comparing with \cite[Corollary~11.8]{Aus} here our phase spaces are not required to be a metric space and further `T.T.' does not imply `point-transitive' in our setting. Moreover, the minimality of $(T,X)$ is important for Corollary~\ref{cor4.9}. In fact, following \cite{AG,HY} there are non-minimal non-weak-mixing cascade $(T,X)$, which are weakly disjoint every \textit{E}-system.

\begin{cor}\label{cor4.10}
Let $(T,X)\in\texttt{SF}_{\min}$ be surjective. If $(T,X)$ is weak-mixing, then $X\perp^wY$ for all $(T,Y)\in\texttt{SF}_{e}$.
\end{cor}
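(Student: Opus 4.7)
The plan is to reduce Corollary~\ref{cor4.10} to Theorem~\ref{thm4.6}, whose conclusion is exactly the claim $X\perp^wY$ for all $(T,Y)\in\texttt{SF}_{e}$. Thus all I need to verify is that the hypothesis of that theorem, namely that $(T,X)$ has no non-trivial distal factor, is implied by weak-mixing (together with minimality and surjectivity).

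First I would observe that weak-mixing of $(T,X)$ forces $Q(X)=X\times X$ at the level of pure topology, with no appeal to an invariant measure. Given $(x,y)\in X\times X$ and $\alpha\in\mathscr{U}_X$, pick an open $W\subseteq X$ with $W\times W\subseteq\alpha$; for any open neighborhoods $U$ of $x$ and $V$ of $y$, T.T.~of $(T,X\times X)$ yields $t\in T$ with $t(U\times V)\cap(W\times W)\neq\emptyset$, i.e.\ points $x'\in U$, $y'\in V$ with $(tx',ty')\in\alpha$. Directing over the triples $(U,V,\alpha)$ in the usual reverse-inclusion order and passing to a convergent subnet of $\{tx'\}$ by compactness, I obtain nets verifying $(x,y)\in Q(X)$.

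Next, since $(T,X)$ is minimal and surjective, Corollary~\ref{cor2.9A} applies: $S_\textit{eq}(X)$ is the smallest closed invariant equivalence relation containing $Q(X)$. Combined with $Q(X)=X\times X$, this gives $S_\textit{eq}(X)=X\times X$, so $(T,X_\textit{eq})$ is trivial, i.e.\ $(T,X)$ has no non-trivial equicontinuous factor. Then Lemma~\ref{lem4.1} (whose $(2)\Rightarrow(1)$ direction rests on Furstenberg's structure theorem for distal minimal semiflows, as recorded in \S\ref{sec2.5}) upgrades this to: $(T,X)$ has no non-trivial distal factor.

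Finally, the ``only if'' direction of Theorem~\ref{thm4.6} applied to $(T,X)$ immediately yields $X\perp^wY$ for every $(T,Y)\in\texttt{SF}_{e}$, which is the desired conclusion. No step is really an obstacle; the only mild subtlety is that Theorem~\ref{thm1.7}'s equivalence ``weak-mixing iff $Q(X)=X\times X$'' is stated for minimal semiflows with an invariant measure, so I cannot quote it directly and must instead re-derive $Q(X)=X\times X$ from scratch as sketched above, which in fact uses only T.T.~of $(T,X\times X)$.
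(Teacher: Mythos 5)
Your proposal is correct and follows essentially the same route as the paper: the paper's own proof also deduces $S_\textit{eq}(X)=X\times X$ from Corollary~\ref{cor2.9A} together with the fact that weak-mixing forces $Q(X)=X\times X$ (a purely topological fact recorded at the start of \S\ref{sec4.1}), and then invokes Theorem~\ref{thm4.6}. Your explicit verification that $Q(X)=X\times X$ needs no invariant measure, and your explicit appeal to Lemma~\ref{lem4.1} to pass from ``no equicontinuous factor'' to ``no distal factor,'' merely fill in details the paper leaves implicit.
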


\begin{proof}
By Corollary~\ref{cor2.9A}, $S_\textit{eq}(X)\supseteq Q(X)$. Then the statement follows from Theorem~\ref{thm4.6} and the fact that $Q(X)=X\times X$.
\end{proof}

Now Theorem~\ref{thm1.8} stated in $\S\ref{sec1.2}$ easily follows from the following more general theorem. Our proof below is of interest because there exists no Furstenberg's intersection lemma (\cite[Proposition~II.3]{F67} and \cite[Lemma~3.2]{DT}) in the literature for non-abelian phase semigroup $T$.

\begin{thm}\label{thm4.11}
Let $(T,X)$ be a minimal semiflow, which admits an invariant measure. Then the following conditions are pairwise equivalent:
\begin{enumerate}
\item[$(1)$] $(T,X)$ is weak-mixing.
\item[$(2)$] $(T,X^n)$ is T.T. for all $n\ge2$.
\item[$(3)$] $N_T(U,V)$ is discretely thick in $T$ for all $U,V\in\mathfrak{U}(X)$.
\item[$(4)$] Let $I$ be any set with $\textrm{card}\,I\ge2$ and $(T,X_i)=(T,X)$ for all $i\in I$; then $(T,\times_{i\in I}X_i)$ is weak-mixing.
\end{enumerate}
\end{thm}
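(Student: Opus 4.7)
The plan is to establish the circular chain $(4)\Rightarrow(2)\Rightarrow(3)\Rightarrow(1)\Rightarrow(4)$, the main difficulty lying in the last step. The implication $(4)\Rightarrow(2)$ is immediate by projection: a basic open set in $X^I\times X^I$ involves only finitely many coordinates, so T.T.\ of $(T,X^I\times X^I)$ forces T.T.\ of every finite sub-power $(T,X^n)$.

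For $(2)\Rightarrow(3)$, given $U,V\in\mathfrak{U}(X)$ and a finite $F=\{t_1,\dots,t_k\}\subset T$, I would form $\tilde U=U^k\in\mathfrak{U}(X^k)$ and $\tilde V=t_1^{-1}V\times\dotsm\times t_k^{-1}V\in\mathfrak{U}(X^k)$; by $(2)$, $(T,X^k)$ is T.T., so there is $t\in N_T(\tilde U,\tilde V)$. Unpacking, one obtains $x_i\in U$ with $tx_i\in t_i^{-1}V$, that is $t_itx_i\in V$, whence $t_it\in N_T(U,V)$ for each $i$, so $Ft\subseteq N_T(U,V)$.

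For $(3)\Rightarrow(1)$, the crux is the observation that in any minimal semiflow, $N_T(U,V)$ is discretely syndetic for every $U,V\in\mathfrak{U}(X)$: by minimality $\bigcup_{t\in T}t^{-1}V=X$, so compactness of $X$ yields a finite subcover $X=\bigcup_{i=1}^{n} t_i^{-1}V$, and then for any $t\in T$ and $x\in U$ some $t_i$ satisfies $t_itx\in V$, showing $\{t_1,\dots,t_n\}t\cap N_T(U,V)\ne\emptyset$. Now given $U_1,V_1,U_2,V_2\in\mathfrak{U}(X)$, the set $N_T(U_1,V_1)$ is discretely thick by $(3)$ while $N_T(U_2,V_2)$ is discretely syndetic by the above; the duality between these two families (recalled after the definition of l.a.p.\ preceding Corollary~\ref{cor1.10}) forces them to intersect non-voidly, so $(T,X\times X)$ is T.T., i.e., $(T,X)$ is weak-mixing.

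The main obstacle is $(1)\Rightarrow(4)$, since no Furstenberg-type intersection lemma is available for non-abelian $T$. My plan is induction on $n\ge2$, with Corollary~\ref{cor4.10} substituting for the missing lemma. The base case $n=2$ is $(1)$ itself; for the inductive step, assuming $(T,X^n)$ is T.T., I would observe that by minimality every invariant measure $\mu$ on $X$ has full support (its support is a non-empty closed positively $T$-invariant subset of $X$, so equals $X$), whence $\mu^{\otimes n}$ has full support on $X^n$; this places $(T,X^n)$ in $\texttt{SF}_{e}$, so Corollary~\ref{cor4.10} applied to the weak-mixing $(T,X)$ yields $X\perp^w X^n$, i.e., $(T,X^{n+1})$ is T.T. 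The passage to arbitrary cardinality $\mathfrak c\ge 2$ is then immediate, since basic opens in $X^{\mathfrak c}\times X^{\mathfrak c}$ depend on only finitely many coordinates and thus T.T.\ on every finite sub-product transfers to T.T.\ of $(T,X^{\mathfrak c}\times X^{\mathfrak c})$.
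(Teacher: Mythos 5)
Your proposal is correct and follows essentially the same route as the paper: the hard implication is handled by induction using the full-support product measure to place $(T,X^n)$ in $\texttt{SF}_{e}$ and then invoking the weak-disjointness result (you use Corollary~\ref{cor4.10}, the paper applies Theorem~\ref{thm4.6} directly, which is the same mechanism), while $(2)\Rightarrow(3)$ via finite powers and $(3)\Rightarrow(1)$ via the thick/syndetic duality are identical to the paper's argument, except that you prove the discrete syndeticity of $N_T(U,V)$ directly by compactness where the paper cites \cite[Lemma~2.3]{DT}. The only point worth making explicit is that the non-emptiness of the sets $t_i^{-1}V$ in your step $(2)\Rightarrow(3)$ rests on surjectivity of $(T,X)$, which is Lemma~\ref{lem2.4A}.
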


\begin{proof}
$(1)\Rightarrow(2)$. We will proceed to show that $(T,X^n)$ is T.T. for all $n\ge2$ by induction on $n$.
First, by definition of weak-mixing, $(T,X\times X)$ is T.T. so the case of $n=2$ holds.
Now, assume the statement holds for all integer $n=k\ge2$. Let $Y=X^k$ and then $(T,Y)$ is a T.T. semiflow. Since $(T,X)$ is minimal with an invariant measure, then $(T,X)$ is surjective by Lemma~\ref{lem2.4A} and there is an invariant measure $\nu$ on $X$ with $\textrm{supp}\,(\nu)=X$. Define a Borel probability measure $\mu=\nu^k$ on $Y$ by $\mu(V_1\times\dotsm\times V_k)=\nu(V_1)\dotsm\nu(V_k)$ for all open subsets $V_1,\dotsc,V_k$ of $X$. Clearly, $\mu$ is $T$-invariant on $Y$ and moreover every open subset of $Y$ has positive $\mu$-measure. Then by Theorem~\ref{thm4.6}, $(T,X\times Y)$ must be T.T. and thus $(T,X^{k+1})$ is a T.T. semiflow.

$(2)\Rightarrow(3)$. Let $U,V\in\mathfrak{U}(X)$ and $K=\{k_1,\dotsc,k_n\}$ a finite subset of $T$. Since by Lemma~\ref{lem2.4A} each $k_i^{-1}V\not=\emptyset$, $U\times\dotsm\times U$ and $k_1^{-1}V\times\dotsm\times k_n^{-1}V$ both are non-empty open subsets of $X^n$, then there is some $t\in T$ such that $U\times\dotsm\times U\cap t^{-1}(k_1^{-1}V\times\dotsm\times k_n^{-1}V)\not=\emptyset$. Thus $Kt\subseteq N_T(U,V)$ and so $N_T(U,V)$ is discretely thick in $T$.

$(3)\Rightarrow(1)$. Let $U,V,U^\prime, V^\prime\in\mathfrak{U}(X)$. Then $N_T(U,V)$ is discretely thick in $T$; moreover, $N_T(U^\prime, V^\prime)$ is discretely syndetic in $T$ by \cite[Lemma~2.3]{DT}. So $N_T(U,V)\cap N_T(U^\prime, V^\prime)\not=\emptyset$. This shows that $N_T(U\times U^\prime, V\times V^\prime)\not=\emptyset$ and thus $(T,X\times X)$ is a T.T. semiflow.

$(2)\Leftrightarrow(4)$. $(4)\Rightarrow(2)$ is obvious; and $(2)\Rightarrow(4)$ follows from the definition of the product topology of $\times_{i\in I}X_i$.

The proof of Theorem~\ref{thm4.11} is thus completed.
\end{proof}

The following Theorem~\ref{thm4.12} implies that:
\begin{quote}
If $(T,X)$ is minimal with $T$ an amenable semigroup, then $(T,X)$ is weak-mixing if and only if it has no non-trivial distal factor.
\end{quote}
This would just answer a question of Petersen in~\cite[p.~280]{P70}. Moreover Theorem~\ref{thm4.12} is a semigroup version of Theorem~\ref{thm2.10} by different approaches.

\begin{thm}\label{thm4.12}
Let $(T,X)$ be a minimal semiflow, which admits of an invariant measure. Then, the following five statements are pairwise equivalent:
\begin{enumerate}
\item[$(1)$] $(T,X)$ is weak-mixing.
\item[$(2)$] $Q(X)=X\times X$.
\item[$(3)$] $S_{\textit{eq}}(T,X)=X\times X$.
\item[$(4)$] $(T,X)$ has no non-trivial distal factor.
\item[$(5)$] $(T,X)$ has no non-trivial equicontinuous factor.
\end{enumerate}
In particular, each of $(1)$ through $(5)$ implies that $(T,X)$ has no distal point.
\end{thm}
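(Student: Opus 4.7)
The plan is to obtain the pairwise equivalence of (1)--(5) by assembling results already proved in the paper, and then to deduce the absence of distal points by adapting the argument of Corollary~\ref{cor1.11}. Since $(T,X)$ is minimal and admits an invariant measure, Lemma~\ref{lem2.4A} first guarantees that $(T,X)$ is surjective; this will be needed to invoke Lemma~\ref{lem4.1}.

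For the five equivalences, I would argue as follows. The equivalence (3)~$\Leftrightarrow$~(5) is tautological from the definition of $S_\textit{eq}(X)$: it equals $X\times X$ precisely when $X_\textit{eq}$ is a singleton, i.e.\ when $(T,X)$ has no non-trivial equicontinuous factor. The equivalence (2)~$\Leftrightarrow$~(3) is exactly Theorem~\ref{thm1.4}, which applies thanks to the invariant-measure hypothesis. The equivalence (5)~$\Leftrightarrow$~(4) is Lemma~\ref{lem4.1}, whose non-trivial direction uses Furstenberg's structure theorem recorded in \S\ref{sec2.5} to extract a non-trivial equicontinuous factor from any non-trivial distal factor. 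Finally, (1)~$\Leftrightarrow$~(5) is Lemma~\ref{lem4.7}. Chaining these yields the desired pairwise equivalence.

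For the last assertion, I would replay the argument of Corollary~\ref{cor1.11}, replacing its appeal to Theorem~\ref{thm1.8} (which requires amenability) by Theorem~\ref{thm4.11}(3), which is available because $(T,X)$ admits an invariant measure. Assuming a distal point $x_0\in X$ for contradiction, choose a closed $W\subseteq X$ with non-empty interior and $x_0\notin W$, and an entourage $\beta\in\mathscr{U}_X$ with $\beta[x_0]\cap W=\emptyset$. For each $\varepsilon\in\mathscr{U}_X$ with $\varepsilon\le\beta$, Theorem~H says $N_T(x_0,\varepsilon[x_0])$ is an IP$^*$-set, while Theorem~\ref{thm4.11}(3) together with the standard fact that every discretely thick subset of $T$ contains an IP-set says $N_T(W,\varepsilon[x_0])$ is an IP-set. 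Hence the closed set $W_\varepsilon=\{w\in W\,|\,\exists\,t\in T\textit{ s.t. }tx_0,tw\in\textrm{cls}_X\varepsilon[x_0]\}$ is non-empty, and the family $\{W_\varepsilon\,|\,\varepsilon\le\beta\}$ has the finite-intersection property. Any $y\in\bigcap_{\varepsilon\le\beta}W_\varepsilon$ is then proximal to $x_0$ yet distinct from it, contradicting the distality of $x_0$.

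The main obstacle I anticipate is bookkeeping rather than mathematical substance: one must verify that each cited result genuinely applies in our setting\,---\,in particular, that the invariant-measure hypothesis is enough, in place of amenability, to drive the IP/IP$^*$ step through Theorem~\ref{thm4.11} rather than Theorem~\ref{thm1.8}. Once this is checked, the whole proof reduces to a short citation bundle followed by one classical intersection argument.
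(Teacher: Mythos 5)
Your proposal is correct and follows essentially the same route as the paper: the equivalences are assembled from Lemma~\ref{lem2.4A}, Theorem~\ref{thm1.4}/Corollary~\ref{cor2.9A}, Lemma~\ref{lem4.1} and Lemma~\ref{lem4.7}, and the absence of distal points is obtained exactly as the paper indicates, by rerunning the IP/IP$^*$ intersection argument of Corollary~\ref{cor1.11} with Theorem~\ref{thm4.11} in place of Theorem~\ref{thm1.8}. The only cosmetic difference is that you cite Theorem~\ref{thm1.4} for $(2)\Leftrightarrow(3)$ where the paper invokes Corollary~\ref{cor2.9A} for $(2)\Rightarrow(3)$, which is the same underlying machinery.
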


\begin{proof}
First of all, $(T,X)$ is surjective by Lemma~\ref{lem2.4A}. $(1)\Rightarrow(2)$ and $(3)\Rightarrow(5)$ both are trivial; $(2)\Rightarrow(3)$ is by Corollary~\ref{cor2.9A} and $(4)\Leftrightarrow(5)$ follows easily from Lemma~\ref{lem4.1}. Finally $(5)\Rightarrow(1)$ follows from Lemma~\ref{lem4.7} at once.

Finally let any of $(1)$ through $(5)$ hold. By Theorem~\ref{thm4.11}, $(T,X)$ is thickly transitive, i.e., for all $U,V\in\mathfrak{U}(X)$, $N_T(U,V)$ is a thick subset of $T$. Then no point of $X$ is distal for $(T,X)$ by Corollary~\ref{cor1.11}. Precisely speaking, this follows from a proof similar to that of Corollary~\ref{cor1.11} with Theorem~\ref{thm4.11} in place of Theorem~\ref{thm1.8}.

The proof of Theorem~\ref{thm4.12} is thus completed.
\end{proof}

Note here that comparing with \cite[Theorem~9.13]{Aus} the only new ingredients of Theorem~\ref{thm4.12} are 1): our phase space $X$ is not required to be metrizable; and 2): $T$ is not necessarily a group. The non-metrizable condition of $X$ that is expected by Petersen in \cite{P70} is just the point for Theorem~\ref{thm1.12}.

In particular, let $T=\mathbb{Z}$ and $X$ be a compact metric space, and assume $(T,X)$ has no nontrivial equicontinuous factor; then $(T,X\times X)$ is T.T. (Keynes and Robertson \cite{KR}).

\subsection{Chaos of minimal weak-mixing semiflow}\label{sec4.2}
Let $(T,X)$ be a semiflow on a uniform non-singleton space $(X,\mathscr{U}_X)$, where $\mathscr{U}_X$ is a compatible symmetric uniform structure on $X$. We first introduce the notion of sensitivity.

\begin{defn}[{cf.~\cite{DT}}]
$(T,X)$ is said to be \textit{sensitive to initial conditions} if there exists an $\varepsilon\in\mathscr{U}_X$ such that for all $x\in X$ and $\delta\in\mathscr{U}_X$ there are $y\in\delta[x]$ and $t\in T$ with $(tx,ty)\not\in\varepsilon$.

Given any $\varepsilon\in\mathscr{U}_X$, let
$$\textrm{Equi}_\varepsilon(T,X)=\left\{x\in X\,|\,\exists\,\delta\in\mathscr{U}_X\textit{ s.t. }t(\delta[x])\subseteq\varepsilon[tx]\ \forall t\in T\right\}.$$
Then,
\begin{quote}
$(T,X)$ is not sensitive to initial conditions iff $\textrm{Equi}_\varepsilon(T,X)\not=\emptyset$ for all $\varepsilon\in\mathscr{U}_X$.
\end{quote}
\end{defn}

It is already known that ``weak-mixing is highly non-equicontinuous'' (cf.~Theorem~\ref{thm4.12}). In fact, weak-mixing is even imcompatible with ``$\varepsilon$-equicontinuity'' as follows:

\begin{lem}\label{lem4.14}
If $(T,X)$ is a weak-mixing semiflow on a uniform $T_2$-space $(X,\mathscr{U}_X)$, then $(T,X)$ is sensitive to initial conditions.
\end{lem}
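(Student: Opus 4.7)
The plan is to exploit the fact that weak-mixing forces nearby orbits to separate by a uniform-structure amount determined purely by the $\mathrm{T}_2$ non-singleton geometry of $X$. First I would pick two distinct points $x_1, x_2 \in X$ together with an entourage $\beta \in \mathscr{U}_X$ separating them, i.e., $(x_1, x_2) \notin \beta$; this is possible since $X$ is $\mathrm{T}_2$. Choose a symmetric $\alpha \in \mathscr{U}_X$ with $\alpha \circ \alpha \circ \alpha \subseteq \beta$, and then pick open neighborhoods $V_i$ of $x_i$ contained in $\alpha[x_i]$. A short triangle-chaining argument shows $(V_1 \times V_2) \cap \alpha = \emptyset$: any $(v_1,v_2)$ in the intersection would chain $(x_1,v_1)$, $(v_1,v_2)$, $(v_2,x_2)$ through $\alpha$ to give $(x_1,x_2) \in \alpha \circ \alpha \circ \alpha \subseteq \beta$, a contradiction. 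Finally take symmetric $\eta \in \mathscr{U}_X$ with $\eta \circ \eta \subseteq \alpha$; I claim this $\eta$ witnesses sensitivity for $(T,X)$.

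To verify the claim, fix any $x \in X$ and $\delta \in \mathscr{U}_X$, and pick an open neighborhood $N$ of $x$ contained in $\delta[x]$. Since $(T,X)$ is weak-mixing, $(T, X \times X)$ is topologically transitive, so applied to the nonempty open sets $N \times N$ and $V_1 \times V_2$ in $X \times X$ we obtain $t \in T$ and points $y_1, y_2 \in N$ with $ty_1 \in V_1$ and $ty_2 \in V_2$. Then $(ty_1, ty_2) \in V_1 \times V_2$, so $(ty_1, ty_2) \notin \alpha$ and in particular $(ty_1, ty_2) \notin \eta \circ \eta$. By symmetry of $\eta$, we cannot have both $(tx, ty_1) \in \eta$ and $(tx, ty_2) \in \eta$ (otherwise chaining yields $(ty_1, ty_2) \in \eta \circ \eta$). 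Hence for at least one index $i \in \{1,2\}$ the point $y := y_i \in N \subseteq \delta[x]$ satisfies $(tx, ty) \notin \eta$, which is exactly what sensitivity demands.

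The main obstacle is really only the initial geometric separation step\,---\,extracting $\alpha$ and open sets $V_1, V_2$ with $(V_1 \times V_2) \cap \alpha = \emptyset$\,---\,and this is handled by the standard uniform-space trick with $\alpha \circ \alpha \circ \alpha \subseteq \beta$. Once that is in place the weak-mixing hypothesis does all the work in a single application of topological transitivity of $(T, X \times X)$; minimality, invariant measures, and the heavier machinery of earlier sections are not needed for this lemma.
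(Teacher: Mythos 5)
Your proof is correct and rests on exactly the same mechanism as the paper's: apply topological transitivity of $(T,X\times X)$ to the square $N\times N$ of a small neighborhood against a fixed non-empty open set uniformly bounded away from the diagonal, then chain entourages to force separation. The only difference is presentational\,---\,the paper argues contrapositively via a point of $\textrm{Equi}_{\varepsilon/3}(T,X)$ whereas you argue directly\,---\,so there is nothing further to add.
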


\begin{proof}
By contradiction, suppose $(T,X)$ were not sensitive to initial conditions; then it holds that $\textrm{Equi}_\varepsilon(T,X)\not=\emptyset$ for all $\varepsilon\in\mathscr{U}_X$. By the weak-mixing property, $(T,X\times X)$ is a T.T. semiflow.
Since $X\times X\not=\varDelta_X$, there is an $\varepsilon\in\mathscr{U}_X$ such that $U:=X\times X\setminus\textrm{cls}_{X\times X}\varepsilon\not=\emptyset$. Take an $x_0\in\textrm{Equi}_{\varepsilon/3}(T,X)$ where $\varepsilon/3$ is an entourage in $\mathscr{U}_X$ such that $\varepsilon/3\circ\varepsilon/3\circ\varepsilon/3\subseteq\varepsilon$. Then there is a $\delta\in\mathscr{U}_X$ such that
$t(\delta[x_0]\times\delta[x_0])\subseteq\varepsilon$ for all $t\in T$. Thus $N_T(\delta[x_0]\times\delta[x_0],U)=\emptyset$, which is a contradiction to that $(T,X\times X)$ is a T.T. semiflow.
The proof of Lemma~\ref{lem4.14} is therefore completed.
\end{proof}

Let $\mathscr{K}(T)$ be the collection of non-empty compact subsets of $T$. Given $\varepsilon\in\mathscr{U}_X$ and $x\in X$, we define the $\varepsilon$-stable set of $(T,X)$ at $x$ as follows:
\begin{equation*}
W_\varepsilon^s(T,X;x)=\{y\in X\,|\,\exists\,K\in\mathscr{K}(T)\textit{ s.t. }(tx,ty)\in\varepsilon\ \forall t\in T\setminus K\}.
\end{equation*}

Then there holds the following lemma.

\begin{lem}[{cf.~\cite[Lemma~3.15]{DT}}]\label{lem4.15}
Let $(T,X)$ be a semiflow such that $T$ is $\sigma$-compact. If $(T,X)$ is sensitive to initial conditions, then there is an $\varepsilon\in\mathscr{U}_X$ such that $W_\varepsilon^s(T,X;x)$ is of the first category in $X$ for all $x\in X$.
\end{lem}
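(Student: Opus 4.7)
The plan is to produce a single $\varepsilon\in\mathscr{U}_X$ depending only on the sensitivity constant, exploit the $\sigma$-compactness of $T$ to decompose each stable set into a countable union of closed sets, and then show that each piece must be nowhere dense---otherwise one could upgrade the deviation bound to all of $T$ and contradict sensitivity at any interior point.

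More concretely, let $\varepsilon_0\in\mathscr{U}_X$ witness sensitivity, and fix a closed symmetric $\varepsilon\in\mathscr{U}_X$ with $\varepsilon\circ\varepsilon\subseteq\varepsilon_0$. For a compact subset $K\subseteq T$ and $x\in X$ set
\[
A_K^x=\{y\in X\,|\,(tx,ty)\in\varepsilon\textrm{ for all }t\in T\setminus K\},
\]
which is closed in $X$ because $\varepsilon$ is closed and the map $y\mapsto(tx,ty)$ is continuous. Writing $T=\bigcup_n K_n$ with $K_n\subseteq K_{n+1}$ compact, one has $W_\varepsilon^s(T,X;x)=\bigcup_n A_{K_n}^x$, so the task reduces to proving that for every compact $K\subseteq T$ and every $x\in X$ the set $A_K^x$ has empty interior.

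Suppose for contradiction that $U:=\mathrm{Int}_X A_K^x\neq\emptyset$ and pick any $y_0\in U$. Using joint continuity of the semigroup action together with the compactness of $K$, one produces an open neighborhood $V$ of $y_0$ with $V\subseteq U$ such that $tV\subseteq\varepsilon[ty_0]$ for every $t\in K$. Indeed, picking $\varepsilon'\in\mathscr{U}_X$ with $\varepsilon'\circ\varepsilon'\subseteq\varepsilon$, for each $t\in K$ joint continuity gives a product neighborhood $W_t\times V_t$ of $(t,y_0)$ landing in $\varepsilon'[ty_0]$; a finite subcover $W_{t_1},\dots,W_{t_k}$ of $K$ then yields the required $V=V_{t_1}\cap\dotsm\cap V_{t_k}$ after standard two-step entourage bookkeeping. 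Combining with the defining property of $A_K^x$, for every $y\in V$ and every $t\in T$ we get $(ty_0,ty)\in\varepsilon\circ\varepsilon\subseteq\varepsilon_0$: when $t\in K$ this is immediate from $ty,ty_0\in\varepsilon[ty_0]$, and when $t\in T\setminus K$ we chain $(ty_0,tx)\in\varepsilon$ and $(tx,ty)\in\varepsilon$ via symmetry of $\varepsilon$. But this contradicts sensitivity at $y_0$: choosing $\delta\in\mathscr{U}_X$ with $\delta[y_0]\subseteq V$, sensitivity supplies $y\in\delta[y_0]$ and $t\in T$ with $(ty_0,ty)\notin\varepsilon_0$.

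The main obstacle is the equicontinuity-on-$K$ step in the third paragraph: upgrading the pointwise-in-$t$ continuity of $x\mapsto tx$ to a single neighborhood $V$ of $y_0$ that works uniformly for all $t\in K$ requires the joint continuity hypothesis built into the definition of a semiflow plus a finite-subcover extraction. Once this uniform neighborhood is in hand, the remainder is a routine Baire category argument combining $\sigma$-compactness of $T$ with the sensitivity hypothesis.
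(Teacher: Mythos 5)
The paper itself gives no proof of this lemma --- it is imported directly from \cite[Lemma~3.15]{DT} --- so there is nothing internal to compare against; your argument is correct and is exactly the expected one (closed pieces $A_K^x$, empty interior via the finite-subcover equicontinuity estimate on $K$ combined with the tail estimate off $K$, contradiction with sensitivity at an interior point, Baire). The one point worth recording is that the decomposition $W_\varepsilon^s(T,X;x)=\bigcup_n A_{K_n}^x$ requires every $K\in\mathscr{K}(T)$ to be contained in some $K_n$, i.e.\ the exhaustion to be cofinal in $\mathscr{K}(T)$; this holds for hemicompact $T$ (countable discrete, or locally compact $\sigma$-compact), which covers every use in the paper, but is not automatic for an arbitrary $\sigma$-compact semigroup, so you should either add that hypothesis silently or note it.
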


The following notion is stronger than the above sensitivity to initial conditions.

\begin{defn}
Let $(T,X)$ be a semiflow and $\varepsilon,\delta\in\mathscr{U}_X$ with $\varepsilon>\delta$. For $x\in X$ define a set
\begin{equation*}
\textrm{LY}_{\varepsilon\textrm{-}\delta}[x]=\left\{y\in X\,|\,y\not\in W_\varepsilon^s(T,X;x)\textit{ and }\exists\,t\in T\textit{ s.t. }(tx,ty)\in\delta\right\}.
\end{equation*}
We say that $(T,X)$ is \textit{$\varepsilon\textrm{-}\delta$ Li-Yorke sensitive} if $\textrm{LY}_{\varepsilon\textrm{-}\delta}[x]$ is of the second category for all $x\in X$.
\end{defn}

\begin{thm}\label{thm4.17}
Let $(T,X)$ be a minimal semiflow with $T$ $\sigma$-compact, which admits of an invariant measure (e.g. $T$ is amenable). If $(T,X)$ is weak-mixing, then there exists an $\varepsilon\in\mathscr{U}_X$ such that for all $\delta\in\mathscr{U}_X$ with $\delta<\varepsilon$, $(T,X)$ is $\varepsilon\textrm{-}\delta$ Li-Yorke sensitive.
\end{thm}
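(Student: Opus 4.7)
The plan is to combine the sensitivity machinery (Lemmas~\ref{lem4.14} and \ref{lem4.15}) with the identity $Q(X)=X\times X$ for minimal weak-mixing semiflows admitting an invariant measure (Theorem~\ref{thm4.12}), and then close by a Baire category argument on the compact Hausdorff (hence Baire) space $X$.

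First, since $(T,X)$ is weak-mixing, Lemma~\ref{lem4.14} gives that $(T,X)$ is sensitive to initial conditions; and since $T$ is $\sigma$-compact, Lemma~\ref{lem4.15} then supplies a single $\varepsilon\in\mathscr{U}_X$ such that $W_\varepsilon^s(T,X;x)$ is of first category in $X$ for \emph{every} $x\in X$. This $\varepsilon$ will be the sensitivity constant for the theorem.

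Next, fix any $\delta\in\mathscr{U}_X$ with $\delta<\varepsilon$. Since open entourages form a base of $\mathscr{U}_X$, I may replace $\delta$ by an open entourage contained in it without loss of generality. For each $x\in X$, set
$$G_\delta(x)=\bigcup_{t\in T}(t^{-1}\delta)[x]=\bigcup_{t\in T}\bigl\{y\in X\,|\,(tx,ty)\in\delta\bigr\}.$$
Each summand is the preimage of the open set $\delta\subseteq X\times X$ under the continuous map $y\mapsto(tx,ty)$, so $G_\delta(x)$ is open in $X$. To obtain \emph{density}, I invoke Theorem~\ref{thm4.12}: the combination of minimality, weak-mixing and the invariant measure forces $Q(X)=X\times X$, whence $Q[x]=X$. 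Corollary~\ref{cor3.8} then gives
$$X=Q[x]=\bigcap_{\alpha\in\mathscr{U}_X}\overline{\bigcup_{t\in T}(t^{-1}\alpha)[x]},$$
so in particular $\overline{G_\delta(x)}=X$. Thus $G_\delta(x)$ is dense open, hence residual in $X$.

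Finally, from the very definition,
$$\textrm{LY}_{\varepsilon\textrm{-}\delta}[x]=\bigl(X\setminus W_\varepsilon^s(T,X;x)\bigr)\cap G_\delta(x),$$
i.e.\ the intersection of the complement of a meager set with a residual set, which is itself residual and in particular of second category. Since this holds for every $x\in X$, the semiflow $(T,X)$ is $\varepsilon\textrm{-}\delta$ Li-Yorke sensitive. The main conceptual point to check is that the \emph{same} $\varepsilon$ produced in Step~2 works uniformly for all smaller $\delta$; this is automatic, because $\varepsilon$ depends only on the sensitivity of $(T,X)$, while the density of $G_\delta(x)$ is delivered by Corollary~\ref{cor3.8} uniformly in the entourage $\delta$.
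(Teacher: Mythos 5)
Your proof is correct, and its skeleton (Lemma~\ref{lem4.14} $\Rightarrow$ sensitivity, Lemma~\ref{lem4.15} $\Rightarrow$ a uniform $\varepsilon$ with $W_\varepsilon^s(T,X;x)$ meager, then a Baire category argument on the decomposition $\textrm{LY}_{\varepsilon\textrm{-}\delta}[x]=\bigl(X\setminus W_\varepsilon^s(T,X;x)\bigr)\cap G_\delta(x)$) coincides with the paper's. The one place you diverge is the density step. The paper establishes that $G_\delta(x)=\bigcup_{t\in T}t^{-1}(\delta[tx])$ is dense by first showing that the whole proximal cell $P[x]=\bigcap_{\alpha}\bigcup_{t}t^{-1}(\alpha[tx])$ is dense, which it justifies only by appealing to ``a slight modification of the proof of Corollary~\ref{cor1.11}'' (i.e.\ the thick-transitivity argument of Theorem~\ref{thm4.11}). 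You instead observe that density of each individual union $\bigcup_{t}(t^{-1}\delta)[x]$ is exactly what $Q[x]=X$ delivers through the closure formula of Corollary~\ref{cor3.8}, with $Q(X)=X\times X$ supplied by Theorem~\ref{thm4.12}. Your route is the tighter one: it needs only the (weaker) statement that $Q[x]=X$ rather than density of $P[x]\subseteq Q[x]$, and it replaces the paper's sketched modification of an earlier proof with two directly citable results already established in the text. The remaining bookkeeping --- openness of $(t^{-1}\delta)[x]$ for an open entourage $\delta$, and the fact that a residual subset of the compact Hausdorff (hence Baire) space $X$ is of second category --- is handled correctly.
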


\begin{proof}
By Lemma~\ref{lem4.14}, $(T,X)$ is sensitive to initial conditions. So by Lemma~\ref{lem4.15}, it follows that there exists an $\varepsilon\in\mathscr{U}_X$ such that
$W_\varepsilon^s(T,X;x)$, for all $x\in X$, is of the first category.

Let $\delta\in\mathscr{U}_X$ with $\delta<\varepsilon$ be any given. Given any $x\in X$, let $P[x]$ be the proximal cell at $x$; i.e., $y\in P[x]$ iff $\exists\,z\in X$ and $\{t_n\}\subseteq T$ s.t. $(t_nx,t_ny)\to(z,z)$. Then by a slight modification of the proof of Corollary~\ref{cor1.11} in $\S\ref{sec1.2}$, we can conclude that $P[x]$ is dense in $X$. In addition,
$$
P[x]=\bigcap_{\alpha\in\mathscr{U}_X}\bigcup_{t\in T}t^{-1}(\alpha[tx]).
$$
Then $\bigcup_{t\in T}t^{-1}(\alpha[tx])$ is an open dense subset of $X$ and $\left(\bigcup_{t\in T}t^{-1}(\delta[tx])\right)\setminus W_\varepsilon^s(T,X;x)\subseteq\textrm{LY}_{\varepsilon\textrm{-}\delta}[x]$. Thus, $\textrm{LY}_{\varepsilon\textrm{-}\delta}[x]$ is of the second category.

This thus completes the proof of Theorem~\ref{thm4.17}.
\end{proof}

\begin{defn}
Let $(T,X)$ be a semiflow and $\varepsilon\in\mathscr{U}_X$. For $x\in X$ define a set
\begin{equation*}
\textrm{LY}_{\varepsilon\textrm{-}0}[x]=\left\{y\in X\,|\,y\not\in W_\varepsilon^s(T,X;x)\textit{ and }\exists\,\{t_n\}\in T\textit{ s.t. }{\lim}_n(t_nx,t_ny)\in\varDelta_X\right\}.
\end{equation*}
We say that $(T,X)$ is \textit{$\varepsilon\textrm{-}0$ Li-Yorke sensitive} if $\textrm{LY}_{\varepsilon\textrm{-}0}[x]$ is of the second category for all $x\in X$.
\end{defn}

\begin{thm}[{cf.~\cite{AK} for $T=\mathbb{Z}_+$ and \cite{DT} for $T$ in abelian semigroups}]\label{thm4.19}
Let $(T,X)$ be a minimal semiflow with $T$ $\sigma$-compact and $X$ a compact metric space, which admits of an invariant measure (e.g. $T$ is amenable). If $(T,X)$ is weak-mixing, then there exists an $\varepsilon\in\mathscr{U}_X$ such that $(T,X)$ is $\varepsilon\textrm{-}0$ Li-Yorke sensitive.
\end{thm}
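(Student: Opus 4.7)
The plan is to pick the required entourage $\varepsilon$ from the sensitivity machinery already in place and then show, via a Baire category argument, that $\textrm{LY}_{\varepsilon\textrm{-}0}[x]$ contains a residual (hence second-category) subset of $X$ for every $x\in X$. Concretely, weak-mixing implies sensitivity to initial conditions by Lemma~\ref{lem4.14}; since $T$ is $\sigma$-compact, Lemma~\ref{lem4.15} then produces some $\varepsilon\in\mathscr{U}_X$ for which $W_\varepsilon^s(T,X;x)$ is of the first category in $X$ for every $x\in X$. This is the $\varepsilon$ I would fix for the rest of the proof.

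The key step is to establish that the proximal cell
\[
P[x]=\{y\in X\,|\,\exists\,\{t_n\}\subseteq T\textit{ s.t. }{\lim}_n(t_nx,t_ny)\in\varDelta_X\}
\]
is a dense $G_\delta$ in the compact metric space $X$. Writing $d$ for a compatible metric and setting $U_n(x)=\{y\in X\,|\,\exists\,t\in T,\ d(tx,ty)<1/n\}$, joint continuity of the action makes each $U_n(x)$ open, and a routine sequential compactness argument yields $P[x]=\bigcap_{n\ge1}U_n(x)$. To see that each $U_n(x)$ is dense, fix any $V\in\mathfrak{U}(X)$ and let $B=B(x,1/(2n))$: by minimality, $x$ is an almost periodic point, so $N_T(x,B)$ is discretely syndetic; while by the weak-mixing hypothesis, Theorem~\ref{thm4.11} gives that $N_T(V,B)$ is discretely thick. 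Since every discretely thick subset of $T$ meets every discretely syndetic one, one finds $t\in T$ and $y\in V$ with $tx,ty\in B$, whence $d(tx,ty)<1/n$, i.e.\ $y\in V\cap U_n(x)$, as required.

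With $P[x]$ a dense $G_\delta$, unwinding the definition gives $\textrm{LY}_{\varepsilon\textrm{-}0}[x]=P[x]\setminus W_\varepsilon^s(T,X;x)$, which is the complement in the Baire space $X$ of the union $(X\setminus P[x])\cup W_\varepsilon^s(T,X;x)$ of two meager sets, and is therefore residual; in particular it is of the second category. The only substantive step is the density of $P[x]$, and the main obstacle is to synchronize the two side-conditions on the return times: the weak-mixing input furnishes a discretely thick set of visits of $V$ to $B$, while minimality furnishes a discretely syndetic set of returns of $x$ itself to $B$, and matching these through the thick-meets-syndetic principle is exactly where the hypotheses (weak-mixing, minimality, and the existence of an invariant measure behind Theorem~\ref{thm4.11}) all enter together.
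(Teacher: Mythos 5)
Your proposal is correct and follows essentially the same route as the paper: fix $\varepsilon$ via Lemmas~\ref{lem4.14} and \ref{lem4.15}, observe that in a compact metric space $P[x]=\bigcap_n\bigcup_{t\in T}t^{-1}(B(tx,1/n))$ is a dense $G_\delta$ (the density of each open layer coming from the discretely-thick transitivity sets of Theorem~\ref{thm4.11} meeting the discretely syndetic return-time sets of the a.p.\ point $x$), and conclude that $\textrm{LY}_{\varepsilon\textrm{-}0}[x]=P[x]\setminus W_\varepsilon^s(T,X;x)$ is residual. This is exactly the argument the paper compresses into ``follows from the proof of Theorem~\ref{thm4.17} by noting that $P[x]$ is a dense $G_\delta$,'' with the density step being the same syndetic-meets-thick modification of Corollary~\ref{cor1.11} that the paper invokes.
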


\begin{proof}
This follows from the proof of Theorem~\ref{thm4.17} by noting that $P[x]$ is a dense $G_\delta$-set in $X$ when $X$ is a compact metric space.
\end{proof}

\begin{defn}
Let $T$ be $\sigma$-compact with a sequence of compact subsets $F_1\subset F_2\subset F_3\subset\dotsm$ such that $T=\bigcup F_n$ and $\epsilon>0$. $(T,X)$ with $X$ a metric space is called \textit{densely Li-Yorke $\epsilon$-chaotic} if there is a dense Cantor set $\Theta\subseteq X$  such that for all $x,y\in\Theta$ with $x\not=y$, there are $\{t_n\}$ and $\{s_n\}$ in $T$ with the properties:
\begin{itemize}
\item $\lim_{n\to\infty}d(t_nx,t_ny)=0$;
\item $s_n\not\in F_n$ and $\lim_{n\to\infty}d(s_nx,s_ny)\geq\epsilon$.
\end{itemize}
\end{defn}

Now we can easily obtain a Li-Yorke chaos result from Theorem~\ref{thm4.19}.

\begin{cor}[{cf.~\cite[Proposition~3.18]{DT} for $T$ in abelian semigroups}]\label{cor4.21}
Let $(T,X)$ be a minimal semiflow with $T$ $\sigma$-compact and $X$ a compact metric space, which admits of an invariant measure (e.g. $T$ is amenable). If $(T,X)$ is weak-mixing, then $(T,X)$ is densely Li-Yorke $\epsilon$-chaotic for some $\epsilon>0$.
\end{cor}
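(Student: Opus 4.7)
The plan is to reduce the corollary to a Mycielski-type Cantor-set construction applied to a dense $G_\delta$ relation of Li-Yorke $\epsilon$-pairs on $X\times X$. By Theorem~\ref{thm4.19} together with Lemma~\ref{lem4.14} one extracts an $\varepsilon\in\mathscr{U}_X$ witnessing both sensitivity and the $\varepsilon$-$0$ Li-Yorke property; fixing a compatible metric $d$ on $X$, choose $\epsilon>0$ with $\{(x,y):d(x,y)<\epsilon\}\subseteq\varepsilon$. One also records that $X$ must be perfect, since a minimal semiflow with an isolated point collapses to a single periodic orbit, incompatible with weak-mixing once $|X|\ge 2$.

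Next I would introduce the scrambled-pair relation
\[
\mathrm{LY}_\epsilon:=\Bigl\{(x,y)\in X\times X:\liminf_{t\in T} d(tx,ty)=0\text{ and }\forall n\,\exists s\notin F_n\text{ with }d(sx,sy)\ge\epsilon\Bigr\}
\]
and show it is a dense symmetric $G_\delta$ subset of $X\times X$. The $G_\delta$ structure is a routine rewriting as $\bigcap_k\bigcup_{t\in T}\{(x,y):d(tx,ty)<1/k\}$ intersected with $\bigcap_{n,k}\bigcup_{s\notin F_n}\{(x,y):d(sx,sy)>\epsilon-1/k\}$, a countable intersection of open sets. For density I would invoke Theorem~\ref{thm4.11}: weak-mixing yields T.T.\ of $(T,X\times X)$ and discrete thickness of $N_T(U,V)$ for all $U,V\in\mathfrak{U}(X)$. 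The proximal clause is dense because, given non-empty open $W\subseteq X^2$ and $k\ge 1$, T.T.\ yields $t\in T$ with $tW$ meeting the $(1/k)$-neighbourhood of $\Delta_X$; the escape clause is dense because discrete thickness of $N_T(W,V_\epsilon^c)$, where $V_\epsilon^c=\{(a,b):d(a,b)\ge\epsilon\}$ is non-empty since $\mathrm{diam}(X)\ge\epsilon$, forces witnesses outside the compact set $F_n$ for each $n$.

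Once $\mathrm{LY}_\epsilon$ is established as a dense $G_\delta$ in $X\times X$, I would apply Mycielski's theorem: as $X$ is a perfect compact metric space, there exists a dense Cantor set $\Theta\subseteq X$ with $(\Theta\times\Theta)\setminus\Delta_X\subseteq\mathrm{LY}_\epsilon$. For any distinct $x,y\in\Theta$ the two clauses of $\mathrm{LY}_\epsilon$ deliver sequences $t_n\in T$ with $d(t_nx,t_ny)\to 0$ and $s_n\notin F_n$ (passing to a subsequence) with $\lim d(s_nx,s_ny)\ge\epsilon$, exactly the definition of densely Li-Yorke $\epsilon$-chaotic.

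The main obstacle I expect is the density of the escape clause: for a given non-empty open $W\subseteq X^2$ and each $n,k$, one must exhibit $(x,y)\in W$ together with some $s\notin F_n$ satisfying $d(sx,sy)>\epsilon-1/k$. The key is to parlay the discrete thickness provided by Theorem~\ref{thm4.11}(3) together with the $\sigma$-compactness hypothesis $T=\bigcup F_n$ so that the open return-time set $N_T(W,V_\epsilon^c)$ cannot be contained in any compact $F_n$. Subsidiary verifications (perfectness and Polishness of $X$, translation of the uniform entourage $\varepsilon$ of Theorem~\ref{thm4.19} into the metric constant $\epsilon$) are comparatively soft.
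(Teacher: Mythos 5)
Your overall strategy---exhibit the set of Li--Yorke $\epsilon$-pairs as a dense $G_\delta$ subset of $X\times X$ and apply Mycielski's theorem to the perfect compact metric space $X$---is exactly what the corollary requires; the paper offers no argument of its own beyond the remark that the result ``follows from Theorem~\ref{thm4.19}'' and a pointer to \cite[Proposition~3.18]{DT}, so your write-up in effect supplies the missing proof, and its architecture is sound.

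One step, however, does not survive scrutiny as written: the density of the escape clause. You propose to deduce, from discrete thickness of $N_T(W,V_\epsilon^c)$ together with $T=\bigcup F_n$, that this return-time set cannot be contained in any compact $F_n$. For a general (non-cancellative) topological semigroup this inference is false: in a semigroup with a right zero $z$ (so $tz=z$ for all $t$) the singleton $\{z\}$ is discretely thick, since $Fz=\{z\}$ for every finite $F$; nothing in the definition of discrete thickness prevents a thick set from being finite or compact. (For groups or cancellative semigroups the identity $|Ft|=|F|$ rescues the argument, but the corollary is stated for arbitrary $\sigma$-compact phase semigroups.) The repair is to take the escape clause from where the paper already provides it, namely Lemma~\ref{lem4.15}: sensitivity (Lemma~\ref{lem4.14}) plus $\sigma$-compactness yield an $\varepsilon\in\mathscr{U}_X$ such that $W_\varepsilon^s(T,X;x)$ is of the first category in $X$ for every $x$. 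Given a basic open set $W_1\times W_2\subseteq X\times X$, choose $x\in W_1$ and, by the Baire property of $X$, a point $y\in W_2\setminus W_\varepsilon^s(T,X;x)$; by the definition of $W_\varepsilon^s$ and your choice of $\epsilon$ with $\{(a,b)\,|\,d(a,b)<\epsilon\}\subseteq\varepsilon$, for every $n$ there is $s\notin F_n$ with $d(sx,sy)\ge\epsilon$. This shows each open set $\bigcup_{s\notin F_n}\{(x,y)\,|\,d(sx,sy)>\epsilon-1/k\}$ is dense with no appeal to thickness at all. Two smaller remarks: your perfectness claim is correct but is obtained more cleanly from weak-mixing directly (an isolated point $x_0$ makes $N_T(\{x_0\}\times\{x_0\},\{x_0\}\times V)$ empty for any open $V$ missing $x_0$, contradicting transitivity of $(T,X\times X)$); and Mycielski's theorem produces a dense countable union of Cantor sets rather than a single dense Cantor set---the latter, being compact and dense, would have to equal $X$---but that imprecision lies in the paper's definition of densely Li--Yorke $\epsilon$-chaotic, not in your argument.
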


\section{Veech's structure theorem for $\mathrm{a.a.}$ semiflows}\label{sec5}
Let $V$ and $D$ be Veech's relations defined as in Definition~\ref{def1.13} in $\S\ref{sec1.3}$.
This section will be devoted to proving (2) and (3) of Theorem~\ref{thm1.15}.

For that, we need to introduce two important lemmas. The first one is borrowed from \cite{Dai}.

\begin{lem}[{cf.~\cite[Theorem~5.11]{Dai}; also \cite[Theorem~1.2]{V68} for $T$ in abelian groups}]\label{lem5.1}
Let $(T,X)$ be an invertible minimal semiflow. Then $V[x]$ is a dense subset of $D[x]$ for each $x\in X$. If $X$ is metrizable, then $V[x]=D[x]$ for all $x\in X$.
\end{lem}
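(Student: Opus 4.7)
The plan has three components: (i) $V[x]\subseteq D[x]$, (ii) density of $V[x]$ in $D[x]$, and (iii) equality when $X$ is metrizable.

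For (i), I would start with $x'\in V[x]$ witnessed by nets $x_n'\to x'$, $t_n\in T$, and $y\in X$ with $t_nx\to y$ and $t_nx_n'=y$. Invertibility together with the compactness of $X$ makes each $t\in T$ a self-homeomorphism, so every $t_n^{-1}$ is a continuous self-map. By minimality, fix a net $r_\beta\in T$ with $r_\beta y\to x$. For each $\beta$, first choose $n(\beta)$ so that $r_\beta t_{n(\beta)}x$ is close to $r_\beta y$ and $x_{n(\beta)}'$ is close to $x'$; then, exploiting the continuity of the single map $t_{n(\beta)}^{-1}$ at $y$, choose $m(\beta)$ so that $t_{m(\beta)}x$ is close enough to $y$ to force both $r_\beta t_{m(\beta)}x$ close to $r_\beta y$ and $t_{n(\beta)}^{-1}(t_{m(\beta)}x)$ close to $t_{n(\beta)}^{-1}(y)=x_{n(\beta)}'$. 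Setting $a_\beta=r_\beta t_{n(\beta)}$, $b_\beta=r_\beta t_{m(\beta)}$, and $z_\beta=t_{n(\beta)}^{-1}t_{m(\beta)}x$ then yields $a_\beta x\to x$, $b_\beta x\to x$, $z_\beta\to x'$, and $a_\beta z_\beta=b_\beta x$, which is the net characterization of $D[x]$ in Definition~\ref{def1.13}(4).

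For (ii), given $x'\in D[x]$ with witnessing $t_\alpha x\to x$, $s_\alpha x\to x$, $y_\alpha:=t_\alpha^{-1}s_\alpha x\to x'$, and an open neighborhood $W$ of $x'$, I would fix $\alpha$ with $y_\alpha\in W$, set $y_0:=s_\alpha x$, and look for a net $\tau_\gamma\in T$ with $\tau_\gamma x\to y_0$ and $\tau_\gamma^{-1}y_0$ converging into $W$; any such cluster point sits in $V[x]\cap W$. The natural choice $\tau_\gamma=s_\alpha u_\gamma$ with $u_\gamma\in T$ and $u_\gamma x\to x$ automatically gives $\tau_\gamma x\to y_0$ and reduces the problem to selecting $u_\gamma$ with $u_\gamma^{-1}x$ close to $t_\alpha^{-1}s_\alpha x$. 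Such $u_\gamma$ should be producible by a further diagonal exploiting that the pair $(x,y_\alpha)$ is proximal: $t_\alpha$ itself witnesses proximity since $t_\alpha x\to x$ and $t_\alpha y_\alpha=s_\alpha x\to x$, so elements of $T$ that simultaneously move $x$ close to $x$ and $y_\alpha$ close to $x$ are available in sufficient abundance to realize $u_\gamma^{-1}x$ approximately $y_\alpha$ inside $W$. For (iii), specializing to metrizable $X$, first-countability lets me replace every net by a sequence, and a diagonal extraction combined with the sequential compactness of $X$ refines the approximation in (ii) so that $x'$ itself — not merely a nearby point — lies in $V[x]$, giving equality.

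The main obstacle is (ii). The definition of $V[x]$ demands the exact equation $t_nx_n'=y$ with a \emph{constant} right-hand side $y$, whereas $D[x]$ only needs $t_ny_n=s_nx$ with the right-hand side $s_nx$ varying with $n$. Absorbing this variation, i.e.\ pinning the orbit landing $\tau_\gamma x$ to a single point $y_0$ while simultaneously steering $\tau_\gamma^{-1}y_0$ into the target neighborhood of $x'$, is the technical heart of the argument; the joint use of invertibility (to make sense of $\tau_\gamma^{-1}$ and to use that each $t^{-1}$ is continuous) and minimality (to maneuver orbit approximations anywhere in $X$) is what makes the bookkeeping go through.
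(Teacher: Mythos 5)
Your step (i) is sound: composing with a net $r_\beta$ that returns $y$ to $x$ by minimality, and using the continuity of each fixed map $t_{n}^{-1}$, does place $V[x]$ inside $D[x]$. The genuine gap is in step (ii), precisely at the point you yourself flag as the technical heart. After reducing to finding $u\in T$ with $ux$ close to $x$ and $u^{-1}x$ close to $y_\alpha=t_\alpha^{-1}s_\alpha x$, you appeal to proximality of $(x,y_\alpha)$, witnessed by $t_\alpha$. But proximality only supplies elements $u$ with $ux$ and $uy_\alpha$ both in $\varepsilon[x]$, and this does \emph{not} force $u^{-1}x$ to be near $y_\alpha$: it only says that $y_\alpha$ and $u^{-1}x$ both lie in $u^{-1}(\varepsilon[x])$, a set whose diameter you cannot control because the family $\{u^{-1}\,|\,u\in T\}$ is not equicontinuous (if it were, the lemma would be immediate). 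Worse, the statement ``there is a net $u_\gamma$ with $u_\gamma x\to x$ and $u_\gamma^{-1}x$ clustering in $W$'' is already equivalent to ``$W$ meets $V[x]$'' (take the constant target $y=x$ and $x_n'=u_n^{-1}x$ in the definition of $V$), so your reduction replaces the assertion to be proved for $x'$ by the same assertion for the nearby point $y_\alpha\in D[x]$; the argument is circular rather than terminating.

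The missing ingredient is the Veech telescoping construction, which this paper displays in its proof of the companion Lemma~\ref{lem5.2} (the almost automorphic analogue; the invertible minimal case of Lemma~\ref{lem5.1} is proved the same way in the cited source). One fixes a continuous pseudo-metric $\rho$ and a summable sequence $\delta_n$, and chooses pairs $\sigma_n,\tau_n\in N_T(x,\delta_n[x])$ with $\rho(\tau_n^{-1}\sigma_n x,x')<\delta_n$, subject to the crucial extra condition that $\sigma_{n+1}$ and $\tau_{n+1}$ move $sx$ by less than $\delta_{n+1}$ for every word $s$ in the finite set already generated by $\sigma_1,\tau_1,\dotsc,\sigma_n,\tau_n$. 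Because each new generator is chosen \emph{after} the old words are fixed, only the continuity of finitely many fixed maps is ever invoked; the partial products $\alpha_n=\sigma_1\dotsm\sigma_{n-1}\tau_n$ then make $\alpha_nx$ a $\rho$-Cauchy net with limit $y$, and $\rho\bigl(\lim_m\alpha_m^{-1}y,\,x'\bigr)=0$, which puts a point of $V[x]$ (with the single constant target $y$) in any prescribed neighborhood of $x'$. This quantifier ordering is exactly what absorbs the varying right-hand side $s_nx$ into one fixed $y$. It also repairs your step (iii): a diagonal over approximants $x_k''\in V[x]$ cannot give $x'\in V[x]$, since the defining equation $t_nx_n'=y$ requires one constant $y$ and $V[x]$ need not be closed; instead, taking $\rho$ to be the metric in the construction yields $x'$ itself.
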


\begin{lem}\label{lem5.2}
Let $(T,X)$ be a bi-continuous semiflow and let $x_0\in P_{\!aa}(T,X)$. Then $V[x_0]$ is a dense subset of $D[x_0]$. If $X$ is metrizable, then $V[x_0]=D[x_0]$.
\end{lem}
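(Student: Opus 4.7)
Plan: Since $x_0\in P_{\!aa}(T,X)$, part~(3) of Definition~\ref{def1.13} gives $V[x_0]=\{x_0\}$ immediately; so proving the density assertion reduces to $D[x_0]\subseteq\{x_0\}$, after which the metrizable statement becomes automatic. I plan to take an arbitrary $y\in D[x_0]$ and use bi-continuity to convert ``approximate'' witnesses for $D[x_0]$ into ``exact'' witnesses for $V[x_0]$; the defining property of $P_{\!aa}$ will then force $y=x_0$.

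Unfolding the characterization from Definition~\ref{def1.13}(4), I would fix nets $\{t_\alpha\},\{s_\alpha\}\subseteq T$ and $\{y_\alpha\}\subseteq X$ with $(t_\alpha x_0,s_\alpha x_0,y_\alpha)\to(x_0,x_0,y)$ and $t_\alpha y_\alpha=s_\alpha x_0$. For each $\varepsilon\in\mathscr{U}_X$, bi-continuity of the set-valued map $t_\alpha^{-1}$ at $x_0$ supplies $\delta=\delta(t_\alpha,\varepsilon)\in\mathscr{U}_X$ with the property that $(u,x_0)\in\delta$ implies $t_\alpha^{-1}u\subseteq\varepsilon[t_\alpha^{-1}x_0]$. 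Whenever $(s_\alpha x_0,x_0)\in\delta$, the inclusion $y_\alpha\in t_\alpha^{-1}(s_\alpha x_0)$ then produces a point $y_\alpha'\in t_\alpha^{-1}(x_0)$ with $(y_\alpha,y_\alpha')\in\varepsilon$. Refining and relabeling as $\varepsilon$ shrinks, I expect to assemble a net $\{y_\alpha'\}$ with $y_\alpha'\to y$, satisfying $t_\alpha x_0\to x_0$ and $t_\alpha y_\alpha'=x_0$ exactly; the definition of $P_{\!aa}(T,X)$ will then force $y=x_0$, which is the content of the lemma (density in the general case and equality in the metric case).

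The main obstacle is precisely the refinement step, because the bi-continuity modulus $\delta(t_\alpha,\varepsilon)$ depends on the individual $t_\alpha$, and $s_\alpha x_0\to x_0$ along the original net does not by itself guarantee $(s_\alpha x_0,x_0)\in\delta(t_\alpha,\varepsilon)$. My plan for overcoming this is to exploit the freedom in the equivalent presentation $D[x_0]=\bigcap_\varepsilon\mathrm{cls}_X A^{-1}Ax_0$ with $A=N_T(x_0,\varepsilon[x_0])$: given a target $\varepsilon$ and a provisional witness $(a,b,z)$ with $ax_0,bx_0\in\varepsilon[x_0]$, $z$ near $y$, and $az=bx_0$, the compatibility $bx_0\in\delta(a,\varepsilon)[x_0]$ can be arranged by passing to a finer witness coming from the uniformity $\delta(a,\varepsilon)\cap\varepsilon[x_0]$, and iterating produces a cofinal subsystem of witnesses on which the modulus condition is satisfied; in the metrizable case this iteration collapses to a diagonal sequence, which is what upgrades density to the equality $V[x_0]=D[x_0]$.
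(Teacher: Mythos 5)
Your reduction is sound: since $x_0\in P_{\!aa}(T,X)$ gives $V[x_0]=\{x_0\}$ and singletons are closed in the $\mathrm{T}_2$ space $X$, the lemma is indeed equivalent to $D[x_0]=\{x_0\}$, and the shape of your argument --- convert a $D$-witness $t_\alpha y_\alpha=s_\alpha x_0$ into an exact $V$-witness $y_\alpha'\in t_\alpha^{-1}(x_0)$ via bi-continuity and then invoke the definition of $P_{\!aa}$ --- is the right one. But the step you yourself flag as the main obstacle is a genuine gap, and your proposed fix does not close it. The modulus $\delta(a,\varepsilon)$ furnished by bi-continuity depends on the individual element $a\in T$, and that element is itself part of the witness being refined: passing to a finer witness at level $\delta(a,\varepsilon)\cap\varepsilon$ produces a new triple $(a_1,b_1,z_1)$ whose defining equation is $a_1z_1=b_1x_0$, so what you now need is $b_1x_0\in\delta(a_1,\varepsilon)[x_0]$, and $\delta(a_1,\varepsilon)$ may be far smaller than $\delta(a,\varepsilon)$. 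Iterating merely reproduces the same mismatch one level down; the element whose modulus you have controlled never coincides with the element appearing in the current witness. Nor can you hold $a$ fixed and refine only $b$: membership in $\bigcap_\eta\mathrm{cls}_XA_\eta^{-1}A_\eta x_0$ guarantees for each $\eta$ \emph{some} pair in $A_\eta\times A_\eta$, not a pair with prescribed first coordinate. Since the witnesses range over infinitely many elements of $T$ whose moduli can degenerate arbitrarily fast, no diagonal or cofinality argument makes the two meet, and the same circularity blocks the metrizable case.

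The paper resolves exactly this difficulty by an inductive selection rather than a refinement of witnesses. Using that $x_0$ is an a.p.\ point (so $N_T(x_0,\delta[x_0])$ is syndetic), it chooses pairs $(\sigma_n,\tau_n)\in N_T(x_0,\delta_n[x_0])^2$ with $\rho(\tau_n^{-1}\sigma_nx_0,x')<\delta_n$ one at a time, and at stage $n+1$ imposes $\rho_H(s\gamma x_0,sx_0)<\delta_{n+1}$ for all $s$ in the \emph{finite} set $F_n$ of words in the previously chosen elements; the point is that the continuity moduli of finitely many set-valued maps $s^{-1}$ can be accommodated simultaneously because the new elements are selected \emph{after} $F_n$ is fixed. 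The telescoping products $\alpha_n=\sigma_1\dotsm\sigma_{n-1}\tau_n$ then make $\{\alpha_nx_0\}$ $\rho$-Cauchy (using $\sum_n\delta_n<\infty$), the limit $z$ serves as the single exact image point demanded by the definition of $V$, and an exact preimage in $\alpha_m^{-1}z$ lands $\rho$-close to the given point of $D[x_0]$. Some device of this ``finitely many maps at a time, new elements chosen last'' kind is what your argument is missing.
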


\begin{proof}
Let $\Sigma$ be the set of all continuous pseudo-metrics on the compact $\textrm{T}_2$ space $X$. Let $\rho\in\Sigma$ be arbitrarily fixed.
First, for every $\delta>0$ and all $x,y\in X$, we shall say $(x,y)\in\delta$ iff $\rho(x,y)<\delta$. For closed subsets $A,B$ of $X$, we say $\rho_H(A,B)<\delta$ if and only if $\rho(a,b)<\delta$ for all $a\in A$ and $b\in B$. We will say $\rho(A,x)<\delta$ if and only if $\rho(a,x)<\delta$ for some $a\in A$.
Write
\begin{gather*}
\langle T^{-1}\circ T\rangle=\left\{\tau_{i_1}^{-1}\sigma_{i_1}\dotsm\tau_{i_n}^{-1}\sigma_{i_n}\,|\,\tau_{i_k}, \sigma_{i_k}\in T \textit{ for }k=1,\dotsc,n, 1\le n<\infty\right\}.
\end{gather*}
Let $\{\delta_n\}_{n=1}^\infty$ be a sequence of positive numbers such that $\sum_n\delta_n<\infty$.

If $x^\prime\in D[x_0]$, then by definition there exist, for each $n$ and $\epsilon>0$, elements $\sigma,\tau\in N_T(x_0,\delta_n[x_0])$ such that $\rho(\tau^{-1}\sigma x_0,x^\prime)<\epsilon$. Moreover, if $F$ is any finite subset of $\langle T^{-1}\circ T\rangle$ it can also be arranged that $\rho_H(s\gamma x_0,sx_0)<\delta_n\ \forall s\in F$, for $\gamma=\sigma$ and $\tau$.

Since $x_0$ is a distal point so it is an a.p. point, then $N_T(x_0,\delta[x_0])$ is syndetic in $T$ for all $\delta>0$. We can select a sequence $(\sigma_1,\tau_1), (\sigma_2,\tau_2), \dotsc$ inductively as follows. First we can choose $\sigma_1,\tau_1\in N_T(x_0,\delta_1[x_0])$ and $F_0=\{e\}$ with $\rho(\tau_1^{-1}\sigma_1x_0,x^\prime)<\delta_1$. Having chosen $(\sigma_1,\tau_1), \dotsc, (\sigma_n,\tau_n)$ let $F_n$ be the finite set of elements of $\langle T^{-1}\circ T\rangle$ which are representable as
$$
t=\tau_1^{\epsilon_1}\sigma_1^{\epsilon_1^\prime}\dotsm\tau_n^{\epsilon_n}\sigma_n^{\epsilon_n^\prime}\quad \textrm{where }\epsilon_i=0\textrm{ or }-1\textrm{ and } \epsilon_i^\prime=0\textrm{ or }1.
$$
Then choose $\sigma_{n+1},\tau_{n+1}\in N_T(x_0,\delta_{n+1}[x_0])$ in such a way that
\begin{enumerate}
\item[(a)] $\rho_H(s\gamma x_0,sx_0)<\delta_{n+1}\ \forall s\in F_n$, where $\gamma\in\{\sigma_{n+1},\tau_{n+1}\}$; and
\item[(b)] $\rho(\tau_{n+1}^{-1}\sigma_{n+1}x_0,x^\prime)<\delta_{n+1}$.
\end{enumerate}
Based on the sequence $\{(\sigma_n,\tau_n)\}$, we define $\alpha_1=\tau_1, \alpha_2=\sigma_1\tau_2$ in $T$, and in general,
$$
\alpha_n=\sigma_1\dotsm\sigma_{n-1}\tau_n\in T\quad (n=2,3,\dotsc).
$$
If $m<n$ we have from (a) that
\begin{equation*}
\rho(\alpha_mx_0,\alpha_nx_0)\le\sum_{j=0}^{n-m-1}\rho(\alpha_{m+j}x_0,\alpha_{m+j+1}x_0)
\le\sum_{j=0}^{n-m-1}(2\delta_{m+j}+\delta_{m+j+1})
\end{equation*}
tends to $0$ as $m\to\infty$. Thus $\rho\textrm{-}\lim_n\alpha_nx_0$ exists because $\sum_{n=1}^{\infty}\delta_n<\infty$. Letting $y$ be the $\rho$-limit, we now claim $x^\prime\in\rho\textrm{-}\lim_{m\to\infty}\alpha_m^{-1}y$ (that is, $\exists x_m^\prime\in\alpha_m^{-1}y$ s.t. $\rho$-$\lim x_m^\prime=x^\prime$). To see this we note that if $n>m$, then
\begin{equation*}
\alpha_m^{-1}\alpha_nx_0=\tau_m^{-1}\sigma_{m-1}^{-1}\dotsm\sigma_1^{-1}\sigma_1\dotsm\sigma_{n-1}\tau_nx_0=\tau_m^{-1}\sigma_m\dotsm\sigma_{n-1}\tau_nx_0
\end{equation*}
because $t^{-1}tx=x\ \forall t\in T$ for all $x\in P_{\!aa}(T,X)$ and $P_{\!aa}(T,X)$ is invariant; and therefore by argument as above, for $n>m$,
$$
\rho_H\left(\alpha_m^{-1}\alpha_nx_0,\tau_m^{-1}\sigma_mx_0\right)\le\sum_{k=m+1}^n\delta_k\to 0\quad \textrm{as }m\to\infty.
$$
Thus by (b),
\begin{equation*}
\lim_{m\to\infty}\rho\left(\alpha_m^{-1}y,x^\prime\right)=\lim_{m\to\infty}\lim_{n\to\infty}\rho\left(\alpha_m^{-1}\alpha_n x,x^\prime\right)=0
\end{equation*}
and then we can choose $x_m^\prime\in\alpha_m^{-1}y$ such that $x_m^\prime\to x^\prime$. By choosing a subnet $\{\beta_i\}$ from the sequence $\{\alpha_n\}$ in $T$, there are points $z,x^{\prime\prime}\in X$ such that
$\beta_ix_0\to z$ and $x^{\prime\prime}\in V[x_0]\cap\lim_j\lim_i\beta_j^{-1}\beta_ix_0$ in $X$;
and moreover, $\rho(x^{\prime\prime},x^\prime)=0$. Of course, if $\rho$ is just a metric on $X$, then $x^{\prime\prime}=x^\prime$.

This shows that $V[x_0]$ is dense in $D[x_0]$ and $V[x_0]=D[x_0]$ when $X$ is metrizable. The proof of Lemma~\ref{lem5.2} is thus complete.
\end{proof}

The following result is exactly (2) of Theorem~\ref{thm1.15}, which generalizes Veech's \cite[Theorem~1.1]{V68} from abelian groups to invertible semiflows admitting invariant measures by using completely different approaches. In fact, our proof is much more simpler than Veech's one presented in \cite{V68} that depends on harmonic analysis on locally compact abelian groups.

\begin{thm}[{cf.~\cite[(iii) of Theorem~16]{AGN} for $T$ in groups}]\label{5.3}
Let $(T,X)$ be an invertible minimal semiflow admitting an invariant measure. Then $Q[x]=D[x]=\overline{V[x]}$ for all $x\in X$.
\end{thm}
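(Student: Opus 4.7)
The plan is to assemble the theorem from results already established in the paper, so that no fundamentally new technique is needed beyond Lemma~\ref{lem5.1}. First I would note that because $(T,X)$ is invertible, it is \emph{a fortiori} bi-continuous, so all the hypotheses of the relevant structural results in $\S\ref{sec3.3}$ and $\S\ref{sec5}$ are simultaneously available.

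For the equality $Q[x]=D[x]$, I would chain together two equalities. Since $(T,X)$ is minimal and admits an invariant measure, Theorem~\ref{thm3.10} gives
\begin{equation*}
U[x]=Q[x]=S_\textit{eq}[x]\qquad \forall x\in X.
\end{equation*}
Since $(T,X)$ is bi-continuous (from invertibility) and minimal, Lemma~\ref{lem3.12} gives $D[x]=U[x]$ for all $x\in X$. Combining the two yields $Q[x]=D[x]$.

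For the equality $D[x]=\overline{V[x]}$, I would invoke Lemma~\ref{lem5.1}, which asserts precisely that on an invertible minimal semiflow, $V[x]$ is a dense subset of $D[x]$. Because $D[x]$ is closed by its very definition (as an intersection of closed sets $D(x,\varepsilon)=\mathrm{cls}_XA^{-1}Ax$), density together with closedness yields $\overline{V[x]}=D[x]$.

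There is no serious obstacle: the whole point is that the earlier ergodic machinery (through Theorem~\ref{thm3.10}) has already delivered the identification $Q[x]=U[x]$ on which the classical Veech-type equality hinges, and Lemma~\ref{lem3.12} bridges $U[x]$ to $D[x]$ through bi-continuity. The only subtlety to be careful about is logical bookkeeping of hypotheses: invertibility is used both to secure bi-continuity (so that Lemma~\ref{lem3.12} applies) and to apply Lemma~\ref{lem5.1}; the invariant measure hypothesis is used to apply Theorem~\ref{thm3.10}; minimality is needed throughout. If one wished to be slightly slicker, one could bypass $U[x]$ and argue directly $D[x]\subseteq Q[x]$ by Corollary~\ref{cor3.13}, then $Q[x]\subseteq D[x]$ via the characterization of $Q[x]$ in Lemma~\ref{lem3.7}(3), combined with $V[x]\subseteq D[x]$ being dense (Lemma~\ref{lem5.1}), but the three-step composition above seems to be the cleanest route.
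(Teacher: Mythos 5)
Your proposal is correct and follows essentially the same route as the paper: the paper also obtains $Q[x]\subseteq D[x]$ from Lemma~\ref{lem3.7}(3) together with Lemma~\ref{lem3.12} (which is exactly the content of Theorem~\ref{thm3.10} combined with Lemma~\ref{lem3.12} that you chain together), and gets $D[x]=\overline{V[x]}\subseteq Q[x]$ from Lemma~\ref{lem5.1} and the closedness of $Q[x]$. The ``slicker'' alternative you sketch at the end is in fact verbatim the paper's own argument.
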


\begin{proof}
Given $x\in X$, $D[x]\subseteq Q[x]$ is obvious. In fact, $V[x]\subseteq Q[x]$ and $Q[x]$ is closed by Definitions~\ref{def1.3} and \ref{def1.13}. So $D[x]=\overline{V[x]}\subseteq Q[x]$ by Lemma~\ref{lem5.1}.
To show $Q[x]\subseteq D[x]$ for all $x\in X$, we fix an $x\in X$ and let $y\in Q[x]$. Then by (3) of Lemma~\ref{lem3.7}, there are nets $\{y_n\}, \{z_n\}$ in $X$ and $\{t_n\}$ in $T$ such that
\begin{gather*}
y_n\to y,\quad z_n\to x,\quad z_n=t_ny_n,\quad t_nx\to x.
\end{gather*}
Then $Q[x]\subseteq D[x]$ by Lemma~\ref{lem3.12}.
Therefore, we have concluded that $D[x]=Q[x]$ for all $x\in X$. This proves Theorem~\ref{5.3}.
\end{proof}

Using Lemma~\ref{lem5.2} in place of Lemma~\ref{lem5.1}, we can obtain the following, which is important for (3) of Theorem~\ref{thm1.15} stated in $\S\ref{sec1.3}$.

\begin{lem}\label{5.4}
Let $(T,X)$ be a minimal bi-continuous semiflow admitting an invariant measure. If $x_0\in P_{\!aa}(T,X)$, then $\{x_0\}=V[x_0]=D[x_0]=Q[x_0]$.
\end{lem}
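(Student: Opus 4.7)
The plan is to assemble the lemma from three already-established facts: the characterization of a.a. points via $V$-cells, Lemma~\ref{lem5.2} on density of $V[x_0]$ in $D[x_0]$, and the identity $D[x]=Q[x]$ proved at the end of $\S\ref{sec3.3}$ as Theorem~\ref{thm1.15}(1).

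First, I would invoke the defining property of an a.a.~point recorded in (3) of Definition~\ref{def1.13}: $V[x_0]=\{x_0\}$ if and only if $x_0\in P_{\!aa}(T,X)$. Since the hypothesis gives us $x_0\in P_{\!aa}(T,X)$, this yields $V[x_0]=\{x_0\}$ immediately, and in particular $\overline{V[x_0]}=\{x_0\}$ because singletons are closed in the $\mathrm{T}_2$ space $X$.

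Next, I would apply Lemma~\ref{lem5.2}, which is available precisely because $(T,X)$ is bi-continuous and $x_0\in P_{\!aa}(T,X)$: it asserts that $V[x_0]$ is dense in $D[x_0]$. Combining with the previous step, $D[x_0]=\overline{V[x_0]}=\{x_0\}$.

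Finally, I would invoke Theorem~\ref{thm1.15}(1), already proved at the end of $\S\ref{sec3.3}$ via Theorem~\ref{thm3.10} (which gives $U[x]=Q[x]$ under minimality plus an invariant measure) together with Lemma~\ref{lem3.12} (which gives $D[x]=U[x]$ under bi-continuity). These together yield $D[x_0]=Q[x_0]$, so the chain $\{x_0\}=V[x_0]\subseteq D[x_0]=Q[x_0]=\{x_0\}$ closes up, producing the claimed equalities. There is no genuine obstacle here: the lemma is a direct corollary of the machinery already in place, the only care needed being to verify that all hypotheses of Lemma~\ref{lem5.2}, Theorem~\ref{thm3.10}, and Lemma~\ref{lem3.12} are indeed inherited from the standing assumptions of Lemma~\ref{5.4} (minimality, bi-continuity, and the existence of an invariant measure).
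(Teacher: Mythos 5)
Your proposal is correct and follows essentially the same route as the paper: the paper's own proof likewise combines $V[x_0]=\{x_0\}$ with Lemma~\ref{lem5.2} to get $D[x_0]=\{x_0\}$, and then uses Lemma~\ref{lem3.12} (together, implicitly, with Theorem~\ref{thm3.10}) to conclude $Q[x_0]\subseteq D[x_0]$. Your explicit appeal to Theorem~\ref{thm3.10} for the inclusion $Q[x_0]\subseteq U[x_0]=D[x_0]$ is in fact slightly more careful than the paper's citation of Lemma~\ref{lem3.12} alone.
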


\begin{proof}
At first, by \cite[Lemma~3.5]{AD} we see $(T,X)$ is surjective. By Lemma~\ref{lem5.2}, it follows that $\{x_0\}=V[x_0]=D[x_0]\subseteq Q[x_0]$. Moreover, $Q[x_0]\subseteq D[x_0]$ by Lemma~\ref{lem3.12}. Thus we can conclude that $\{x_0\}=V[x_0]=D[x_0]=Q[x_0]$ and the proof is complete.
\end{proof}

Based on Theorem~\ref{thm1.4} and Lemma~\ref{5.4} we can obtain the next result which is just the third part of Theorem~\ref{thm1.15} stated in $\S\ref{sec1.3}$.

\begin{thm}\label{thm5.5}
Let $(T,X)$ be a minimal bi-continuous semiflow admitting an invariant measure. Then
$(T,X)$ is $\mathrm{a.a.}$ if and only if $\pi\colon(T,X)\rightarrow(T,X_\textit{eq})$ is of almost 1-1 type.
\end{thm}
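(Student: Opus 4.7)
The plan is to combine Theorem~\ref{thm1.4} (giving $S_\textit{eq}(X)=Q(X)$) with Lemma~\ref{5.4} (which identifies the $Q$-class of an a.a.\ point as a singleton) for the ``only if'' direction, and for the ``if'' direction to push the a.a.\ testing nets down to the equicontinuous factor $(T,X_\textit{eq})$, solve the problem there, and pull the conclusion back through the almost 1-1 fibre.

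For the ``only if'' direction I would simply take any $x_0\in P_{\!aa}(T,X)$, which exists by hypothesis. Lemma~\ref{5.4} then gives $Q[x_0]=\{x_0\}$, while Theorem~\ref{thm1.4} identifies $Q[x_0]$ with $S_\textit{eq}[x_0]=\pi^{-1}(\pi(x_0))$. Hence the fibre of $\pi$ over $\pi(x_0)\in X_\textit{eq}$ is a singleton, which is exactly the definition of almost 1-1 type.

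For the ``if'' direction I would fix $y_0\in X_\textit{eq}$ with $\pi^{-1}(y_0)=\{x_0\}$ and aim to verify $x_0\in P_{\!aa}(T,X)$ from the definition; minimality of $(T,X)$ then upgrades this to the a.a.\ property of the semiflow. The key intermediate fact I would establish is that \emph{every} point of $(T,X_\textit{eq})$ is a.a. Indeed, by Theorem~\ref{thm1.4} $(T,X_\textit{eq})$ is equicontinuous, minimal and invertible, so by Lemma~\ref{lem2.2} it is distal, i.e.\ $P(X_\textit{eq})=\varDelta_{X_\textit{eq}}$. If $\{t_n\}\subset T$, $y_n'\to y'$ in $X_\textit{eq}$, $t_n y_0\to w$ and $t_n y_n'=w$, then equicontinuity lets one transfer the convergence $y_n'\to y'$ into $t_n y'\to w$; combined with $t_n y_0\to w$ this says $(y_0,y')$ is proximal, so distality forces $y'=y_0$.

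Finally, to conclude $x_0\in P_{\!aa}(T,X)$, I would take arbitrary nets with $t_n x_0\to z$, $x_n'\to x'$ and $t_n x_n'=z$, apply the continuous factor map $\pi$ to obtain $t_n y_0\to\pi(z)$, $\pi(x_n')\to\pi(x')$ and $t_n\pi(x_n')=\pi(z)$, and use the just-proved a.a.\ property of $y_0$ in $X_\textit{eq}$ to conclude $\pi(x')=y_0$. The almost 1-1 hypothesis then gives $x'\in\pi^{-1}(y_0)=\{x_0\}$, i.e.\ $x'=x_0$. The main obstacle is the auxiliary fact that every point of an equicontinuous minimal invertible semiflow is a.a.; once that is in hand, the rest is a mechanical combination of Theorem~\ref{thm1.4}, Lemma~\ref{5.4}, and the definition of ``almost 1-1 type''.
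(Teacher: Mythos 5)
Your proposal is correct and follows essentially the same route as the paper: the ``only if'' direction via Lemma~\ref{5.4} and Theorem~\ref{thm1.4}, and the ``if'' direction by establishing that every point of the minimal equicontinuous invertible factor $(T,X_\textit{eq})$ is a.a.\ and pulling that back through the singleton fibre. You merely spell out in more detail the two steps the paper's proof asserts tersely (that $V[y]=\{y\}$ in $X_\textit{eq}$, and that this forces $V[x]=\{x\}$ upstairs), and both of your arguments for these are sound.
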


\begin{proof}
Since $(T,X)$ is a minimal semiflow admitting an invariant measure, by Theorem~\ref{thm1.4} we have $S_\textit{eq}(X)=Q(X)$.

(1). Let $(T,X)$ be a.a.; then there exists an $x_0\in X$ such that $V[x_0]=\{x_0\}$. Then $Q[x]=\{x_0\}$ by Lemma~\ref{5.4}. Thus, $\pi\colon(T,X)\rightarrow(T,X_\textit{eq})$ is of almost 1-1 type at $x_0$. This has concluded the ``only if'' part.

(2). Assume $\pi\colon (T,X)\rightarrow(T,X_\textit{eq})$ is of almost 1-1 type. Then we can take an $x\in X$ such that $\pi^{-1}\pi(x)=\{x\}$. Set $y=\pi(x)$. Since $(T,X_\textit{eq})$ is minimal equicontinuous invertible by Lemmas~\ref{lem2.4A} and \ref{lem2.2}, hence $V[y]=\{y\}$. This implies that $V[x]=\{x\}$ and so $(T,X)$ is an a.a. semiflow. This proves the ``if'' part. The proof is complete.
\end{proof}

\begin{cor1.17}
Let $(T,X)$ be a minimal bi-continuous semiflow, which admits an invariant measure. Then $(T,X)$ is equicontinuous iff all points are a.a. points.
\end{cor1.17}

\begin{proof}
(1) The ``only if'' part. By Lemma~\ref{lem2.3}, $P(X)=Q(X)=\varDelta_X$. Thus $(T,X)$ is pointwise a.a. by Theorem~\ref{thm1.15}.

(2) The ``if'' part. By Theorem~\ref{thm1.15}, $Q(X)=\varDelta_X$. Then $(T,X)$ is equicontinuous by Lemma~\ref{lem2.3}. This proves Corollary~\ref{cor1.17}.
\end{proof}
\section*{\textbf{Acknowledgments}}%
The author would like to thank Professor Joe~Auslander and Professor Eli Glasner for their many helpful comments on the first version of this paper.

This work was partly supported by National Natural Science Foundation of China (Grant Nos. 11431012, 11271183) and PAPD of Jiangsu Higher Education Institutions.

\end{document}